\documentclass[11pt]{amsart}
\parskip02pt
\usepackage{amssymb,amsmath,epsfig,mathrsfs, enumerate}
\usepackage{graphicx}
\usepackage[normalem]{ulem}
\usepackage{fancyhdr}
\pagestyle{fancy}
\fancyhead[RO,LE]{\small\thepage}
\fancyhead[LO]{\small \emph{\nouppercase{\rightmark}}}
\fancyhead[RE]{\small \emph{\nouppercase{\rightmark}}}
\fancyfoot[L,R,C]{}

\usepackage[margin=3cm]{geometry}
\usepackage{hyperref}
\hypersetup{
 colorlinks   = true,
 urlcolor     = blue,
 linkcolor    = blue,
 citecolor   = red ,
 bookmarksopen=true
}


\newtheorem{thm}{Theorem}[section]
\newtheorem{cor}[thm]{Corollary}
\newtheorem{lem}[thm]{Lemma}

\newtheorem{rem}[thm]{Remark}
\theoremstyle{example}

\numberwithin{equation}{section}

\newcommand\myeq{\mathrel{\stackrel{\makebox[0pt]{\mbox{\normalfont\tiny def}}}{=}}}

\newcommand{\R}{\mathbb{R}}
\newcommand{\N}{\mathbb{N}}

\newcommand{\ve}{\epsilon}

\begin{document}
\title[]
 {$C^{1, \alpha}$ Regularity for  degenerate fully nonlinear  elliptic equations with  Neumann boundary conditions}

 \author{Agnid Banerjee}
\address{Tata Institute of Fundamental Research\\
Centre For Applicable Mathematics \\ Bangalore-560065, India}\email[Agnid Banerjee]{agnidban@gmail.com}

\author{Ram Baran Verma}
\address{Tata Institute of Fundamental Research\\
Centre For Applicable Mathematics \\ Bangalore-560065, India}\email[Ram Baran Verma]{rambv88@gmail.com}

 \thanks{First author is supported in part by SERB Matrix grant MTR/2018/000267}

\subjclass[2010]{Primary 35J60, 35D40.}
\keywords{Pucci's extremal operator, Degenerate elliptic,  Viscosity solutions, Regularity}
\begin{abstract}
In this paper, we establish $C^{1, \alpha}$ regularity  upto the boundary  for  a class of degenerate fully nonlinear elliptic equations with Neumann boundary conditions. Our   main result Theorem \ref{main} constitutes the boundary analogue of the   interior $C^{1, \alpha}$ regularity result established in \cite{IS1} for equations with similar structural assumptions.   The proof of our main result is achieved  via  compactness arguments   combined with new   boundary H\"{o}lder estimates for   equations which are uniformly elliptic when the gradient is either small or large.
\end{abstract}

\maketitle

\tableofcontents

\section{Introduction}
In this paper, we are concerned with the regularity upto the boundary  for  solutions to fully nonlinear equations of the type
\begin{equation}\label{ei}
|Du|^{\beta} F(D^2u, x) =f,
\end{equation}
 with Neumann boundary conditions, where  $\beta \geq 0$, $F$ is uniformly elliptic and $F(0,x)=0$.  Equation \eqref{ei} constitutes a subfamily of  a class of  nonlinear elliptic equations studied in a series of papers by Birindelli and Demengel  starting with \cite{BD}. We note that such equations are not uniformly elliptic, they are either degenerate or singular depending on whether $\beta>0$ or $\beta <0$.  In the singular case ( i.e. when $\beta<0$), the authors in \cite{BD} proved many important results like comparison principles and Liouville type properties. See also  \cite{BD1} for regularity results in this case.
\medskip

In the degenerate case (i.e. when $\beta>0$), the first breakthrough was made by Imbert and Silvestre in \cite{IS1} where the authors proved the interior $C^{1, \alpha}$ regularity for solutions to such equations as in \eqref{ei}.  A fairly simple  example as in \cite{IS1} shows that solutions to such equations cannot be more regular than $C^{1, \alpha}$ even when $F(D^2 u)=\Delta u$.    Subsequently, optimal $C^{1, \alpha}$ regularity results in case of concave $F$  have been obtained in the recent  interesting work \cite{ART}.  We  note that the proof of the $C^{1, \alpha}$ result  in \cite{IS1} is based on successful adaptation of  compactness arguments inspired by the ideas  as in the fundamental work of Caffarelli in \cite{Ca}. 
  We also refer the reader to the paper  \cite{BD2} for $C^{1, \alpha}$ results in case Dirichlet boundary conditions.  Our main result Theorem \ref{main} below thus  complements the regularity results previously obtained  in  \cite{IS1} and \cite{BD2}.

\medskip

Now, in order to put things in the right perspective, we note that getting a $C^{1,\alpha}$ regularity result  in general   amounts to show that the graph of the solution $u$ can be touched by an affine function so that the error is of order $r^{1+\alpha}$ in a ball of radius $r$ for every $r$ small enough. The proof of this is based on iterative argument where one ensures improvement of flatness at  every successive scale.  At each step, via rescaling, it reduces to show that if $<p,x> +u$ solves \eqref{ei} in $B_1$, then the oscillation of $u$ is strictly smaller in a smaller ball upto a linear function. This is accomplished via compactness arguments which crucially relies on apriori estimates. Now for a $u$ which solves \eqref{ei}, we have that $u-<p,x>$ is a solution to
\begin{equation}\label{it}
|Dv +p|^\beta F(D^2v, x)=f.
\end{equation}
Therefore, in order to make such a compactness argument work for $\beta>0$, it is important to get equicontinuous estimates for  equations of the type \eqref{it} independent of $|p|$. This is precisely done in \cite{IS1} using H\"older estimates for small slopes (i.e. when $|p|$ is small) established  by the same authors in their previous work  \cite{IS}  combined with a new Lipschitz estimate for large slopes which they obtain by adapting the Ishii-Lion's approach as in \cite{IL} to their setting.

\medskip

In this paper, we follow a strategy similar to that in \cite{IS1} with appropriate  adaptations. For small slopes, we establish analogous boundary H\"older estimates as in \cite{IS} for Neumann conditions  by the method of sliding cusps introduced in the same paper \cite{IS}. However for large slopes,   we could not find a suitable   adaptation of the Ishii-Lion's approach  in our setting for  getting equicontinuous estimates.  We note that  although such an  approach   has been implemented for   global oblique derivative problems  by Barles in \cite{ba},  nevertheless a suitable  localization of such an approach in  case of non-homogeneous boundary conditions is not clear to us.  Therefore, in order to overcome such an obstruction, we employ the method  of Savin as in \cite{sa}  based on sliding paraboloids  in order  to obtain  equicontinuity estimates for large slopes.  More precisely, we adapt a certain quantitative version of Savin's method  due to  Colombo and Figalli in \cite{CF}.  We also note that such oscillation  estimates are in fact established for more general fully nonlinear operators ( with structural assumptions as in SC1)-SC3) in Section \ref{lga}) and we believe that this aspect  could  possibly be of independent interest and may find other applications.  Finally for a historical account, we note that the method of sliding paraboloids seems to have originated first in a slightly different context  in the work of Cabre in \cite{Cb}.

\medskip
 As the reader will observe, the implementation of either of these approaches for Neumann boundary conditions  is somewhat delicate.  For instance in the case of small slopes, because of certain technical obstructions, our proof of the $L^{\ve}$ estimate as in Theorem \ref{bounepsilon} is based  on  the Calderon-Zygmund decomposition instead of the growing ink spot lemma  as used  in \cite{IS}. Moreover for large slopes, unlike that in \cite{IS1},  since our oscillation estimate as stated in Theorem \ref{rescaled} below only holds at large enough scales, therefore the compactness arguments in our setting required  some appropriate modifications. 
 \medskip

The paper is organized as follows.  In Section \ref{pl}, we  introduce basic notations and then  state our main result.  In Section \ref{sma}, we establish uniform boundary H\"older estimates for small slopes by the  method of sliding cusps. In Section \ref{lga}, we obtain analogous equicontinuous estimates for large slopes via sliding paraboloids. In Section \ref{end}, we finally prove our main result Theorem \ref{main} using   the compactness method which  crucially relies on the  regularity estimates proved  in Sections \ref{sma} and \ref{lga}.  Finally we refer the reader to  \cite{P} for Lipschitz regularity results for equations of the type \eqref{ei} in the singular case with homogeneous Neumann conditions.  
\medskip

In closing, we would like to mention that it remains to be seen whether similar regularity results can be obtained for more general oblique derivative conditions. This is an interesting aspect to which we would like to come back in a future study. Finally we would like the reader to note that  Neumann regularity results are also useful in the context of Signorini type obstacle problems. See for instance  \cite{ACS}, \cite{AU88}, \cite{Caf},  \cite{MS1}   and \cite{U85} to name a few.

\section{Notations and the statement of the main result}\label{pl}
For a given $r>0$ and $x\in\R^{n},$ we denote by $B_r(x)$ the ball of radius $r$ centered at $x=(x', x_n)$ and the  set $B_r(x) \cap \{ y:y_n>0\}$ by $B_r^{+}(x)$. When $x=0,$ we will occasionally denote such sets by $B_r$ and $B_r^+$ respectively. Also  the set $\{x_n=0\} \cap B_r$ will be denoted by $B_r^{0}$. Likewise $Q_r(x)$ will denote a cube of length $r$ centered at $x.$ In particular, if $x=0,$ we will use the simpler notation $Q_r$ for such a set.  $Q_r^{0}$ will refer to the set  $Q_r\cap\{y_n=0\}$.  For $x_{0}\in\{y_n=0\},$ we  also define the upper half cube of side length $r$ as follows:
\[Q^{+}_r(x_{0})=\{x\in\R^{n}~~|~~|x'-x'_0|_\infty<\frac{r}{2}~~~\text{and}~~0<x_n<r\},\]
Finally $S(n)$ will denote the set of all $n \times n$ real symmetric matrices.

\medskip

Now we list our basic structural assumptions.  We will assume  that $F$ as in \eqref{ei} is uniformly elliptic with ellipticity bounds $\lambda$ and $\Lambda$, i.e.
\begin{equation}\label{F}
\lambda ||N^+||- \Lambda ||N^-|| + F(M, x)\leq F(M+N,x) \leq F(M,x ) + \Lambda ||N^+|| - \lambda ||N^-||,
\end{equation}
where $N^+$ and $N^-$ denote  the positive and negative parts of a symmetric matrix $N$ respectively.  Moreover, we will also assume that
\begin{equation}\label{F1}
 |F(M, x) - F(M, y)| \leq \omega(|x-y|) \|M\|,
 \end{equation}
for some modulus of continuity $\omega$.
We now  state our main result.
\subsection*{Statement of the main result}
\begin{thm}\label{main}
Let $u$ be a viscosity solution to the following Neumann problem
\begin{equation}\label{deg}
\begin{cases}
|Du|^{\beta} F(D^2u, x) = f, \ \text{in $\Omega \cap B_1(0)$, $0 \in \partial \Omega$, $\beta \geq 0$},
\\
u_{\nu}=g,\ \text{on $\partial \Omega \cap B_1(0)$},
\end{cases}
\end{equation}
where $F$ satisfies the  structural assumptions in \eqref{F} and \eqref{F1}, $\Omega$ is a  $C^{2}$ domain,  $f \in C(\overline{\Omega})$ and $g \in  C^{\alpha_0}(\partial \Omega)$ for some $\alpha_0 >0$.   Then we have that  $u \in C^{1, \alpha}(\overline{  \Omega \cap B_{1/2}(0)})$ for some $\alpha > 0$ depending on $n, \lambda, \Lambda, \omega, \beta,\alpha_0$ and the $C^{2}$ character of $\Omega$. Here $\nu$ denotes the outward unit normal to $\Omega$.
\end{thm}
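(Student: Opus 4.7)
The plan is to prove Theorem~\ref{main} following the Caffarelli-style compactness-and-iteration scheme adapted by Imbert--Silvestre in \cite{IS1}, with the additions needed to accommodate the inhomogeneous Neumann condition. First, since $\partial \Omega$ is $C^{2}$, I would flatten the boundary near $0$: a local $C^{2}$ diffeomorphism $\Phi$ straightens $\partial \Omega \cap B_1$ to $B_1^{0}$ and sends $\Omega \cap B_1$ into $B_1^{+}$, while the outward normal $\nu$ becomes $-e_n$. Under this change of variables the equation retains the form $|D\tilde u|^{\beta} \tilde F(D^{2} \tilde u, x) = \tilde f$ with $\tilde F$ still uniformly elliptic and with a modulus of continuity inherited from \eqref{F1} and the $C^{2}$ character of $\Phi$; the lower-order terms coming from the pull-back either get absorbed into $\tilde f$ or shift the slope of an approximating affine function. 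The boundary condition becomes $\partial_n \tilde u = \tilde g \in C^{\alpha_0}(B_1^{0})$, and we henceforth work in $B_1^{+}$.

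The heart of the argument is an \emph{improvement-of-flatness lemma} at the flattened boundary point $0$: there exist universal constants $\mu \in (0,1)$, $\alpha \in (0,\alpha_0)$, and $\delta>0$ such that if $v$ solves $|Dv+p|^{\beta} F(D^{2} v,x) = f$ in $B_1^{+}$ with $\partial_n v = g - p\cdot e_n$ on $B_1^{0}$, $\|v\|_{L^\infty} \leq 1$, $\|f\|_{L^\infty} \leq \delta$ and $\|g\|_{C^{\alpha_0}} \leq \delta$, then one can find an affine function $\ell(x)=a+q\cdot x$ with $|a|+|q| \leq C$ and $(q+p)\cdot e_n = g(0)$ such that $\|v-\ell\|_{L^\infty(B_\mu^{+})} \leq \mu^{1+\alpha}$. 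Iterating via the standard rescalings $v_k(x) = \mu^{-k(1+\alpha)}(v_{k-1}(\mu x)-\ell_{k-1}(\mu x))$ produces, at $0$, an affine function touching $v$ with error $O(r^{1+\alpha})$, which combined with the interior result of \cite{IS1} and a covering of $\overline{\Omega \cap B_{1/2}}$ yields the asserted $C^{1,\alpha}$ regularity.

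The improvement-of-flatness lemma is in turn established by dichotomy on $|p|$. In the \emph{small slopes} regime $|p| \leq M$, I argue by compactness: supposing the conclusion fails, a limit $v_\infty$ of counterexamples is extracted using the uniform boundary H\"older estimate proved in Section~\ref{sma}, which applies precisely because the slope is bounded and yields equicontinuity up to the flat boundary. Stability of viscosity solutions, together with stability of the Neumann condition under uniform convergence, shows that $v_\infty$ solves a uniformly elliptic equation $F_\infty(D^{2} v_\infty) = 0$ in $B_1^{+}$ with constant Neumann data $\partial_n v_\infty = g(0)$ on $B_1^{0}$; classical boundary $C^{1,\alpha}$ regularity for such a linearized oblique-derivative problem then produces an affine approximant that violates the failure hypothesis for large $k$. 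In the \emph{large slopes} regime $|p|>M$, the equation becomes essentially uniformly elliptic, and the required equicontinuity comes from Theorem~\ref{rescaled} of Section~\ref{lga}, proved there by Savin's sliding-paraboloid method in the quantitative Colombo--Figalli form.

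The principal obstacle, flagged explicitly in the introduction, is that Theorem~\ref{rescaled} produces oscillation decay only \emph{above a critical scale} $r_0 = r_0(|p|)$, so one cannot iterate it all the way down to zero as in \cite{IS1}. To overcome this I would run the compactness argument only down to scale $r_0$; at that scale the rescaled problem is genuinely uniformly elliptic with controlled right-hand side and $C^{\alpha_0}$ Neumann data, and I hand it off to the classical boundary $C^{1,\alpha}$ theory for oblique-derivative problems. The delicate point is matching constants: one must ensure that the affine approximant produced at scale $r_0$ has slope close enough to $p$ to keep the global iteration consistent, and here the $C^{\alpha_0}$ assumption on $g$ is used to absorb the boundary-data error accumulated across dyadic scales. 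Threading these quantitative bounds together with the preliminary flattening completes the proof of Theorem~\ref{main}.
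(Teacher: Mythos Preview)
Your overall architecture---flatten, prove an improvement-of-flatness lemma by dichotomy on $|p|$, iterate---matches the paper's, but there are two substantive gaps.

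First, your flattening claim is incorrect. Under a $C^2$ diffeomorphism $\Phi$ the chain rule gives $Du=(D\Phi)^T D\tilde u$, so $|Du|^\beta$ becomes $\langle A(x) D\tilde u, D\tilde u\rangle^{\beta/2}$ with $A=(D\Phi)(D\Phi)^T$, \emph{not} $|D\tilde u|^\beta$; and $F(D^2u,x)$ picks up first-order terms from $D^2\Phi$, so the flattened operator now depends on the gradient. Moreover, a generic flattening turns the Neumann condition into an oblique one; the paper uses distance-function coordinates (see \cite{GT}, Lemma 14.16, or \cite{BL}) precisely because those preserve Neumann data. This matters downstream: every compactness and flatness lemma must be proved for the perturbed form $\langle A(x)(Du+p),Du+p\rangle^{\beta/2}F(D^2u,Du,x)=f$, and the paper needs an extra preliminary rescaling $u_s(x)=s^{-1}u(sx)$ (Step~1 of the proof) to force $|D_qF|\le\eta_0$ before the iteration can even begin---a reduction absent from your outline.

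Second, your handling of the large-slope equicontinuity does not match the paper and is imprecisely formulated. The paper does \emph{not} ``hand off to classical $C^{1,\alpha}$ theory at scale $r_0$.'' In the compactness lemma (Lemma~\ref{comp1}) they instead rescale $w_k=\theta_k(u_k-u_k(0))$ with $\theta_k=\max\{|p_k|^{-1},\|D_qF_k\|\}\to 0$; then $\operatorname{osc} w_k=O(\theta_k)$ and Theorem~\ref{rescaled} applies with $\delta'=C_0\theta_k/\varepsilon$, yielding oscillation decay for $s=0,\dots,m_k$ with $m_k\to\infty$. Scaling back gives $|u_k(x)-u_k(0)|\le C|x|^\alpha$ whenever $|x|\ge\rho^{m_k}$, which is exactly what Arzel\`a--Ascoli needs since $\rho^{m_k}\to 0$. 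No classical theory at an intermediate scale is invoked and no matching of constants across scales is required; the affine approximant comes from the $C^{1,\alpha'}$ regularity (via \cite{MS}) of the \emph{limit} $v_0$, which solves a genuinely uniformly elliptic Neumann problem.
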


\begin{rem}
For the precise notion of viscosity solutions to fully nonlinear Neumann problems, we refer the reader to \cite{LZ}.
\end{rem}

From  Theorem \ref{main}, the following corollary  can be deduced. 

\begin{cor}\label{main1}
Let $u$ be a viscosity solution to the following Robin boundary problem
\begin{equation}\label{degb}
\begin{cases}
|Du|^{\beta} F(D^2u, x) = f, \ \text{in $\Omega \cap B_1(0)$, $0 \in \partial \Omega$, $\beta \geq 0$},
\\
u_{\nu} + h(x) u=g,\ \text{on $\partial \Omega \cap B_1(0)$},
\end{cases}
\end{equation}
where $F$ satisfies the assumptions in \eqref{F} and \eqref{F1}, $\Omega \in C^{2}$,  $f \in C(\overline{\Omega})$ and $h, g\in  C^{\alpha_0}(\partial \Omega)$ for some $\alpha_0 >0$.   Then $u \in C^{1, \alpha}(\overline{ B_{1/2} \cap \Omega})$ for some $\alpha > 0$ depending on $n, \lambda, \Lambda, \omega, \beta,\alpha_0$ and the $C^{2}$ character of $\Omega$.
\end{cor}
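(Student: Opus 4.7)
The natural plan is to reduce Corollary \ref{main1} to Theorem \ref{main} by absorbing the zero-order term $h(x)u$ into the boundary datum. More precisely, set
\[
\tilde{g}(x) \myeq g(x) - h(x)\, u(x), \qquad x\in\partial\Omega\cap B_1(0),
\]
so that $u$ becomes a viscosity solution of the Neumann problem
\[
\begin{cases}
|Du|^\beta F(D^2 u, x) = f &\text{in }\Omega\cap B_1(0),\\
u_\nu = \tilde{g} &\text{on }\partial\Omega\cap B_1(0).
\end{cases}
\]
The equation is unchanged, the boundary operator is now purely Neumann, and the only issue is to verify that $\tilde{g}$ meets the hypothesis $\tilde{g}\in C^{\alpha_0}(\partial\Omega)$ required by Theorem \ref{main}.

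The first step I would carry out is to establish a preliminary Hölder estimate $u\in C^{\gamma}(\overline{\Omega\cap B_{3/4}(0)})$ for some $\gamma>0$. This does not follow from Theorem \ref{main} directly in the Robin case, but it follows from the boundary Hölder estimates for small slopes proved in Section \ref{sma} (combined, for points away from the boundary, with the interior Hölder theory of \cite{IS1}). Indeed, a standard fixed-point / perturbation argument treats the zero-order term $-hu$ as a bounded right-hand side contribution to an effective Neumann datum in the small-slope regime, since $h$ and $u$ are bounded on $\partial\Omega\cap B_1(0)$. This gives $u\in C^{\gamma}$ up to $\partial\Omega\cap B_{3/4}(0)$.

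Next, choose $\gamma' \myeq \min(\alpha_0,\gamma)>0$. Since $h\in C^{\alpha_0}(\partial\Omega)\subset C^{\gamma'}(\partial\Omega)$ and $u\in C^{\gamma}(\partial\Omega\cap B_{3/4}(0))\subset C^{\gamma'}$, the product $hu$ lies in $C^{\gamma'}$ on $\partial\Omega\cap B_{3/4}(0)$, and therefore so does $\tilde{g} = g-hu$. Applying Theorem \ref{main} with Hölder exponent $\gamma'$ in place of $\alpha_0$ yields $u\in C^{1,\alpha}(\overline{\Omega\cap B_{1/2}(0)})$ for some $\alpha>0$ depending on the listed parameters.

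The main obstacle is the self-referential nature of the new Neumann datum: one cannot apply Theorem \ref{main} with $\tilde{g}$ unless some Hölder control on $u$ is already known. The bootstrap described above is the delicate point, and it is essentially a rescaled version of the small-slope estimates from Section \ref{sma} that is needed to get the initial $C^\gamma$ regularity when the boundary condition involves $u$ itself. Once this preliminary estimate is in place, a single application of Theorem \ref{main} suffices, and no further iteration is required to reach the final $C^{1,\alpha}$ regularity.
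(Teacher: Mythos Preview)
Your approach is essentially the same as the paper's: rewrite the Robin condition as $u_\nu=\tilde g$ with $\tilde g=g-hu$, obtain a preliminary boundary H\"older estimate for $u$ so that $\tilde g$ becomes H\"older, and then invoke Theorem \ref{main}. One small simplification the paper exploits and that you should make explicit: no fixed-point or perturbation argument is needed for the preliminary step, because Theorem \ref{holder1} requires only $\|g\|_{L^\infty}\le C_0$, not any H\"older control; since $h$ and $u$ are bounded, $\tilde g$ is automatically in $L^\infty$, and after flattening one applies Theorem \ref{holder1} directly to obtain $u\in C^\gamma$ up to the boundary.
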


\section{H\"older estimates upto the boundary for equations which are uniformly elliptic when the gradient is large}\label{sma}
In this section we establish uniform  non-perturbative H\"older estimates for equations of the type \eqref{it} for small $|p|'s$ (say when $|p| \leq a_0$ for some $a_0>0$). We first note that this in turn is equivalent to getting similar estimates for 
We first note that establishing uniform H\"older estimates for small $|p|$ ( say $|p| \leq a_0$)  upto the boundary for equations of the type
\[
F(D^2u, x)= \frac{f}{|Du + p|^{\beta}}
\]
which lends itself an uniformly elliptic structure when   say $|Du|$ satisfies  $|Du| > 2a_0 +1$ in the viscosity sense. Therefore,  this reduces  to getting  uniform  H\"older estimates for equations which are uniformly elliptic when the gradient is large. We thus introduce the relevant framework similar to that in \cite{IS}.

For a given $\gamma>0$ and   $0< \lambda< \Lambda$,   let   $\mathcal{P}^{\pm}_{\lambda, \Lambda, \gamma}$ be defined by
\begin{equation}\label{def}
 \mathcal{P}_{\lambda, \Lambda, \gamma}^{+}(D^{2}v, Dv)=
\begin{cases}
    \Lambda\text{tr} D^{2}v^{+}-\lambda\text{tr} D^{2}v^{-}+\Lambda|Dv|,\,& \text{if}~|Dv|\geq \gamma\\
    +\infty,              & \text{otherwise}
\end{cases}
\end{equation}
and
\begin{equation}\label{def}
 \mathcal{P}_{\lambda, \Lambda, \gamma}^{-}(D^{2}v, Dv)=
\begin{cases}
    \lambda\text{tr} D^{2}v^{+}-\Lambda\text{tr} D^{2}v^{-} - \Lambda|Dv|,\,& \text{if}~|Dv|\geq \gamma\\
    -\infty,              & \text{otherwise}.
\end{cases}
\end{equation}

When the context is clear, we will frequently  denote $\mathcal{P}^{\pm}_{\lambda, \Lambda, \gamma}$ simply by $\mathcal{P}^{\pm}$.  We first recall the interior $C^{\alpha}$ estimate as established in Theorem 1.1 in \cite{IS}. 
\begin{thm}\label{interho}
For any continuous function $u:\overline{B_{1}}\longrightarrow\R,$  satisfying in the viscosity sense,
\begin{equation}\label{inholder}
\left\{
\begin{aligned}{}\
\mathcal{P}^{-}(D^{2}u,Du)&\leq C_{0}~\text{in}~B_{1},\\
\mathcal{P}^{+}(D^{2}u,Du)&\geq-C_{0}~\text{in}~B_{1},\\
\|u\|_{L^{\infty}(B_{1})}&\leq C_{0},
\end{aligned}
\right.
\end{equation}
we have that  $u\in C^{\alpha}(B_{\frac{1}{2}}(0))$ for some $\alpha$ depending on $\lambda, \Lambda$ and the dimension $n$. Furthermore, the following estimate holds, 
\begin{equation}
\|u\|_{C^{\alpha}(B_{\frac{1}{2}}(0))}\leq C(n,\lambda,\Lambda,\gamma, C_{0}).
\end{equation}
\end{thm}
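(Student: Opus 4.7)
The plan is to follow the standard Krylov--Safonov route adapted to the degenerate setting: first establish a weak Harnack inequality (equivalently an $L^{\varepsilon}$ estimate) for nonnegative supersolutions of $\mathcal{P}^{-}(D^{2}u, Du) \leq C_{0}$, and then derive $C^{\alpha}$ regularity by iterating the resulting oscillation decay. The key novelty compared to the classical uniformly elliptic situation is that $\mathcal{P}^{\pm}$ provide no information on the set $\{|Du| < \gamma\}$, where any continuous function is trivially admissible. One therefore has to exploit the dichotomy that either $u$ is already almost Lipschitz with constant $\gamma$ on the inactive region, or the equation is genuinely uniformly elliptic where classical arguments apply.

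For the $L^{\varepsilon}$ estimate, I would use the method of sliding cusps introduced by Imbert and Silvestre. Given a nonnegative supersolution $u$ with $\inf_{B_{1/2}} u \leq 1$, consider a family of concave cusp functions $\phi_{x_{0}}(x) = c - A |x - x_{0}|^{s}$ with a suitable exponent $s \in (0, 1)$, parametrised by a vertex $x_{0}$ in an auxiliary set. Slide each cusp upward until it touches $u$ from below at a contact point $y$. The steepness of the cusp near its vertex guarantees $|D\phi_{x_{0}}(y)| \geq \gamma$, so the supersolution inequality becomes effective at $y$ and yields a bound on $D^{2}u(y)$ from above in the viscosity sense. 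An Alexandrov--Bakelman--Pucci type area formula applied to the map $x_{0} \mapsto y$ then translates this pointwise information into a measure estimate on the sublevel set $\{u \leq M\}$, and a Calder\'on--Zygmund iteration upgrades this density estimate to the desired $L^{\varepsilon}$ bound.

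Once the $L^{\varepsilon}$ estimate is in hand, one applies it simultaneously to $\sup u - u$ and $u - \inf u$ to obtain oscillation decay of the form $\operatorname{osc}_{B_{1/2}} u \leq \theta\, \operatorname{osc}_{B_{1}} u + C\gamma$ for some $\theta \in (0,1)$, where the additive term reflects the inactivity of the equation on $\{|Du| < \gamma\}$. Under the rescaling $v(x) = u(rx)/r^{\alpha}$ the effective gradient threshold becomes $r^{1-\alpha}\gamma$, which is smaller than $\gamma$ for $\alpha < 1$; hence the rescaled function belongs to the same class of equations with an only more restrictive threshold, and a Campanato--type iteration produces the desired H\"older estimate.

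The main obstacle is the construction of the sliding cusps together with a rigorous verification that the contact points $y$ genuinely activate the equation, i.e.\ that $|D\phi_{x_{0}}(y)| \geq \gamma$ along the way. This requires a careful tuning of the exponent $s$ and the amplitude $A$, together with a separation of contact points close to the vertex (where the cusp is steep enough to switch on the operator) from those far from it, which must be handled by a trivial Lipschitz bound. A secondary subtlety is that $\mathcal{P}^{\pm}_{\lambda, \Lambda, \gamma}$ are not scale invariant because of the $\Lambda |Du|$ first order term and the gradient threshold, so one must track carefully how these quantities transform under the iterative normalisations in order to close the induction.
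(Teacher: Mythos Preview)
The paper does not prove this theorem at all: it is stated as a recollection of Theorem~1.1 in \cite{IS} (Imbert--Silvestre), with no proof given. Your outline is a faithful sketch of the Imbert--Silvestre argument, with one minor deviation: in \cite{IS} the passage from the measure-to-uniform estimate to the $L^{\varepsilon}$ bound is done via the growing ink-spot lemma rather than a Calder\'on--Zygmund decomposition. (Amusingly, the present paper does switch to Calder\'on--Zygmund for its \emph{boundary} analogue in Theorem~\ref{bounepsilon}, citing technical obstructions with the ink-spot approach near the flat boundary.) Either covering argument works in the interior setting and leads to the same conclusion, so your proposal is correct.
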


\begin{rem}\label{scaling}
It is clear from the definition of $\mathcal{P}^{\pm}(M, p)$ that if $u$ satisfies $\mathcal{P}^{+}(D^{2}u, Du)\geq L$ $\big(\text{resp.}, \mathcal{P}^{-}(D^{2}u, Du)\leq L\big)$ in $\Omega,$ then the rescaled function $v(x)=Mu(x_{0}+rx)$ satisfies
\[\mathcal{P}_{r,M}^{+}(D^{2}v, Dv)\geq Mr^{2}L~~\big(\text{resp.}~~~\mathcal{P}_{r,M}^{-}(D^{2}v, Dv)\leq MLr^{2}\big)~~\text{in}~\frac{1}{r}\Omega - x_0,\]
where
\[
 \mathcal{P}_{r,M}^{+}(D^{2}v, Dv)=
\begin{cases}
    \Lambda\text{tr} D^{2}v^{+}-\lambda\text{tr} D^{2}v^{-}+r\Lambda|Dv|,\,& \text{if}~|Dv|\geq rM\gamma\\
    +\infty,              & \text{otherwise}.
\end{cases}
\]

Similarly, $\mathcal{P}_{r,M}^{-}(D^{2}v, Dv)$ is also defined.
\end{rem}




We  now  proceed with our proof of analogous boundary estimates. In Sections \ref{sma} and  \ref{lga}, we only restrict to the case when $\partial \Omega= \{x_n=0\}$. In Section \ref{end}, we then  show how to reduce to flat boundary conditions.  The following result is the measure to uniform estimate at the boundary, which is analogue to  Lemma 3.1 in \cite{IS}.

\begin{thm}\label{t1a}
There exist two small constants $\epsilon_{0}>0$ and $\delta>0,$ and a large constant $K>0,$ such that if $\gamma \leq \ve_0$, then for any lower semicontinuous function $u:\overline{Q^{+}_{1}}\longrightarrow\R$ satisfying
\begin{equation}\label{m12*}
\left\{
\begin{aligned}{}
&u\geq0~~\text{in}~~Q^{+}_{1},\\
\mathcal{P}^{-}(D^{2}u, Du)&\leq1~\text{in}~Q^{+}_{1},\\
u_{x_n}& \leq 0~\text{on}~Q_{1}^{0},\\
|\{u>K\}\cap Q^{+}_{1}|&\geq(1-\delta)|Q^{+}_{1}|,
\end{aligned}
\right.
\end{equation}
we have that $u>1$ in $Q^{+}_{\frac{1}{16\sqrt{n}}}.$
\end{thm}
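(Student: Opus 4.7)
I would argue by contradiction, adapting the strategy of Lemma 3.1 in \cite{IS} from interior balls to the half-cube $Q_{1}^{+}$ with the Neumann face $Q_{1}^{0}$. Suppose there exists $\hat x\in\overline{Q^{+}_{1/(16\sqrt{n})}}$ with $u(\hat x)\leq 1$. The goal is to derive a universal lower bound on $|\{u\leq K\}\cap Q_{1}^{+}|$ that contradicts the measure hypothesis in \eqref{m12*} once $\delta$ is small enough and $K$ is large enough.

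\textbf{Cusp barrier.} For each pivot $y$ in a suitable subset of $Q_{1}^{+}$, I would consider a cusp of the form $\Psi_{y}(x)=-A|x-\tilde y|^{-\alpha}$, where $\tilde y=(y',-\tau)$ with $\tau>0$ fixed, i.e.\ centered strictly \emph{below} the Neumann face, and $\alpha$ is chosen large (depending on $n,\lambda,\Lambda$). A standard Hessian computation shows that the radial eigenvalue $-A\alpha(\alpha+1)|x-\tilde y|^{-\alpha-2}$ dominates the $n-1$ positive tangential eigenvalues, so that on a suitable annulus around $\tilde y$ one has $\mathcal{P}^{-}(D^{2}\Psi_{y},D\Psi_{y})<-2$, provided $|D\Psi_{y}|\geq\gamma$ on this annulus, which forces the smallness condition $\gamma\leq\epsilon_{0}$. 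Crucially, since $\tilde y_{n}<0$, a direct computation yields $\partial_{x_{n}}\Psi_{y}(x)=A\alpha|x-\tilde y|^{-\alpha-2}(x_{n}-\tilde y_{n})\big|_{x_{n}=0}>0$ for every $x\in Q_{1}^{0}$, which rules out the ``Neumann alternative'' $\partial_{x_{n}}\Psi_{y}\leq 0$ in the viscosity definition of a supersolution of \eqref{m12*}.

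\textbf{Sliding and covering.} Fix a pivot $y$ and slide $\Psi_{y}+t$ upward in $t$ until it first touches $u$ from below at some $\bar x\in\overline{Q_{1}^{+}}$; such a $\bar x$ exists by the lower semicontinuity of $u$. If $\bar x\in Q_{1}^{0}$, then by the preceding step the Neumann alternative fails and we must have $\mathcal{P}^{-}(D^{2}\Psi_{y},D\Psi_{y})(\bar x)\leq 1$; if $\bar x\in Q_{1}^{+}$, the interior viscosity inequality gives the same conclusion. In either case this contradicts the strict subsolution bound $\mathcal{P}^{-}(D^{2}\Psi_{y},D\Psi_{y})<-2$ on the annulus, so the contact is forced to occur near the cusp singularity, where $\Psi_{y}(\bar x)$ is very negative; combined with $u(\hat x)\leq 1$ (which bounds $t$ from above by $u(\hat x)-\Psi_{y}(\hat x)$), this forces $u(\bar x)\leq K$ for a universal choice of $K$. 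Finally, letting $y$ range over a cover of a neighborhood of $\hat x$ in $Q^{+}_{1/(16\sqrt n)}$ and tracking contact points by a Vitali/Calder\'on--Zygmund type argument (the same ingredient used later for Theorem \ref{bounepsilon}), one extracts a universal lower bound $|\{u\leq K\}\cap Q_{1}^{+}|\geq c_{0}|Q_{1}^{+}|$, and choosing $\delta<c_{0}$ produces the desired contradiction with \eqref{m12*}.

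\textbf{Main obstacle.} The main difficulty is reconciling three constraints simultaneously: (i) $\Psi_{y}$ must be a strict $\mathcal{P}^{-}$-subsolution on its working annulus, which forces $|D\Psi_{y}|\geq\gamma$ there and hence the smallness condition $\gamma\leq\epsilon_{0}$; (ii) $\partial_{x_{n}}\Psi_{y}>0$ on the portion of $Q_{1}^{0}$ the cusp can reach, achieved geometrically by placing the center $\tilde y$ below the Neumann face; and (iii) the singularity at $\tilde y$ must be far enough from the region where $u\approx K$ for the touching inequality to yield $u(\bar x)\leq K$, while still close enough that $y$ sweeps out a positive fraction of $Q_{1}^{+}$ in the covering. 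Handling the extra possibility of contact on $Q_{1}^{0}$ is the key new issue not present in \cite{IS}, and it is precisely the placement of $\tilde y$ below $\{x_{n}=0\}$ that makes the Neumann alternative innocuous.
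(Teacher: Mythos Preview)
There is a genuine gap in your sliding step. When $\Psi_{y}+t$ touches the supersolution $u$ from below at $\bar x$, the viscosity inequality $\mathcal{P}^{-}(D^{2}u,Du)\leq 1$ yields $\mathcal{P}^{-}(D^{2}\Psi_{y}(\bar x),D\Psi_{y}(\bar x))\leq 1$. This is \emph{compatible} with your computed bound $\mathcal{P}^{-}(D^{2}\Psi_{y},D\Psi_{y})<-2$, not in contradiction with it; both inequalities point the same way. Consequently nothing forces the contact point to lie near the singularity, and the argument that ``$u(\bar x)\leq K$'' collapses. To get a contradiction by comparison you would need a barrier satisfying $\mathcal{P}^{-}(D^{2}\Psi_{y},D\Psi_{y})>1$ on the annulus, but your choice $\Psi_{y}=-A|x-\tilde y|^{-\alpha}$ with large $\alpha$ has the dominant eigenvalue \emph{negative}, which pushes $\mathcal{P}^{-}$ the wrong way. (This barrier is essentially $-V$ for the $V$ used in the doubling Theorem~\ref{double}, where the sign is the opposite of what you need here.)

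The paper's proof is not a barrier/comparison argument at all: it is an ABP-type \emph{area formula} argument. One takes the square-root cusp $\psi(\xi)=-10|\xi|^{1/2}$, and for each vertex $x$ in $G=\{u>K\}\cap Q^{+}_{1/(16\sqrt n)}$ one minimizes $u(z)+10|z-x|^{1/2}$ to produce a contact point $y\in\{u<K\}$. At $y$ one has $Du(y)=D\psi(y-x)$ and $D^{2}u(y)\geq D^{2}\psi(y-x)$; combining this with $\mathcal{P}^{-}(D^{2}u,Du)\leq 1$ bounds $|D^{2}u(y)|$ by $C(1+|D^{2}\psi|+|D\psi|)$. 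Since all eigenvalues of $D^{2}\psi$ are of comparable size $\sim|y-x|^{-3/2}$, the Jacobian of the map $\tau\colon y\mapsto x$ is universally bounded, and the change-of-variables formula gives $|G|\leq C|\{u<K\}|$, contradicting the measure hypothesis for small $\delta$. The Neumann face is handled more simply than in your sketch: since the vertex $x$ already lies in $Q^{+}_{1/(16\sqrt n)}$ (so $x_{n}>0$), a direct computation shows $\partial_{x_{n}}\bigl(-10|\cdot-x|^{1/2}\bigr)=5x_{n}|y-x|^{-3/2}>0$ at any $y\in Q_{1}^{0}$, which already violates $u_{x_{n}}\leq 0$; no reflection below the face is needed. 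Your proposed Vitali covering is not the right mechanism and cannot substitute for the Jacobian bound that links the measures of $G$ and of the contact set.
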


\begin{proof}

The proof  is divided into three steps.

\medskip

\emph{Step 1:}
Similar to  that in \cite{IS}, we  first assume  that $u$ is a classical solution of \eqref{m12*}, i.e.  let  $u\in C^{2}(Q^{+}_{1}) \cap C(\overline{Q^{+}_{1}})$ and satisfies the Neumann condition in the viscosity sense.  Suppose  on the contrary that for all $\epsilon_{0}, \delta$ and $K$ such that for which \eqref{m12*} holds, there exists $x_{0}\in Q^{+}_{\frac{1}{16\sqrt{n}}}$ such that $u(x_{0})\leq1.$ Let us consider the following set
$G=\{u>K\}\cap  Q^{+}_{\frac{1}{16\sqrt{n}}}$. Given $x\in G$, let $y\in\overline{Q^{+}_{1}}$ be a point such that
\begin{equation}\label{e0}
u(y)+10|y-x|^{1/2}=\min_{\overline{Q^{+}_{1}}}\{u(z)+ 10 |z-x|^{1/2}\}
\end{equation}
i.e. we slide the cusp  with vertex at $x$  till touches the graph of $u$ for the first time. 
Now on  one hand, since $u\geq0$ in $\overline{Q^{+}_{1}}$ and $x\in G\subset B^{+}_{1/8},$ therefore we have
\begin{align}
u(\xi)+10|\xi-x|^{\frac{1}{2}}&\geq10|\xi-x|^{\frac{1}{2}}\\
&>5\sqrt{\frac{7}{2}},
\end{align}
for any $\xi\in\partial Q^{+}_{1}\cap\{x_{n}>0\}.$
On the other hand,
\begin{align}
u(x_{0})+10|x_{0}-x|^{\frac{1}{2}}&\leq1+10\times(\frac{1}{4})^{\frac{1}{2}}\\
&=6<5\sqrt{\frac{7}{2}}.
\end{align}
This shows that $y \notin \partial Q_{1}^{+} \cap \{x_n>0\}$. We now show that  $y\not\in Q_{1}^{0}.$  If that is not the case, then since $u_{x_n} \leq 0$ in the viscosity sense, therefore necessarily  we  must have
\begin{align}\label{phi}
\limsup_{h\rightarrow0}\Big[\frac{\phi(y',he_{n})-\phi(y',0)}{h}\Big]\leq 0
\end{align}
where $\phi(\cdot)=-10|\cdot-x|^{\frac{1}{2}}.$ However a direct calculation shows that the quantity in \eqref{phi} equals
\begin{align*}
-5\frac{(y-x)}{|y-x|^{\frac{3}{2}}}\cdot e_{n}
=\frac{5x_{n}}{|y-x|^{\frac{3}{2}}}>0~~\hspace{0.5cm}~(\text{since}~~y_{n}=0),
\end{align*}
which is a contradiction to \eqref{phi}. Therefore, the minimum will never be achieved on the boundary and thus $y\in Q^{+}_{1}.$ At this point, the rest of the proof is similar to that in the in interior case ( see Proposition 3.3 in \cite{IS}) but we  nevertheless provide the details for the sake of completeness.

\medskip

Let  $K=1+5\sqrt{\frac{7}{2}}.$ In this way, we can ensure  that $u(y)<K.$ In particular $x\not=y$ and therefore $|z-x|^{\frac{1}{2}}$ is differentiable at $z=y.$ Note that for one value of $x,$ there can be more than one  $y$ where the minimum is achieved. However, the value of $y$ determines $x$ completely since we must have \[Du(y)=5(x-y)|y-x|^{-\frac{3}{2}}.\]
Let us now set $\psi(\xi)=-10|\xi|^{\frac{1}{2}}.$ Then from the extrema conditions, we have 
\begin{align}
Du(y)&=D\psi(y-x),\label{1}\\
D^{2}u(y)&\geq D^{2}\psi(y-x)\label{1a}.
\end{align}
The relations \eqref{1} and \eqref{1a}, together with $\mathcal{P}^{-}(D^{2}u, Du)\leq1,$ imply that
\begin{equation}\label{1b}
|D^{2}u(y)|\leq C(1+|D^{2}\psi(y-x)|+|D\psi(y-x)|),
\end{equation}
 as long as $\epsilon_{0}\leq \min_{B_{\sqrt{n}}}|D\psi|$.  Note that over here, $C$ only depends on the ellipticity constants and the dimension. Since for each value of $y,$ there is only one value of $x,$ so we can define a map $\tau(y):=x.$ Let  $U$ be the domain of $\tau.$ It is clear that $U\subset\{z~:~u(z)<K\}$ and $\tau(U)=G.$\\
By putting $x=\tau(y)$ in \eqref{1} and employing the chain rule, we get
\[D^{2}u(y)=D^{2}\psi(y-\tau(y))(I-D\tau(y)).\]
Solving for $D\tau$ and using the estimate \eqref{1b}, we get
\begin{equation}\label{compr}
|D\tau(y)|\leq1+C\frac{1+|D^{2}\psi(y-x)|+|D\psi(y-x)|}{|D^{2}\psi(y-x)|}\leq \tilde C.\end{equation}
The reader should note over here in \eqref{compr},  we crucially used the fact that all the eigenvalues of $D^2 \psi$ are comparable. 
Now, since
\[ 
\frac{\left|Q^{+}_{\frac{1}{16\sqrt{n}}} \right|}{|Q^{+}_{1}|} \geq c(n),\]  therefore  in view of the last condition in  \eqref{m12*}  and the fact that $U\subset\{z~|~u(z)<K\},$ we obtain
\[(1-c\delta)| Q^{+}_{\frac{1}{16\sqrt{n}}}|\leq |G|=\int_{U}|\text{Det}\ \tau(y)|dy\leq C|U| \leq C\delta|Q^{+}_{1}|.\]
 This is a contradiction if $\delta$ is small enough. This completes the proof of \emph{Step 1}.

\medskip

\emph{Step 2:} Assuming that the  Theorem  \ref{t1a}  holds for semiconcave supersolutions, we  now  show that this in turn  implies that the conclusion  remains true for  lower semi-continuous supersolution $u.$

Let $u$ be a merely lower semi continuous supersolution defined in $\overline{Q^{+}_{1}}.$ Let $v:=\min\{u,2K\},$ where $K$ is as in \emph{Step 1}. Note that $v$ is still a supersolution because it is the minimum of two supersolutions. Indeed, suppose that $v-\phi$ has minimum at $x_{0}.$ There are two possibilities:

\begin{equation}
\begin{cases}
\text{ 1)}\quad x_{0}\in Q^{+}_{1}\ \text{or}
\\
\text{ 2)}\quad x_{0}\in Q_{1}^{0}.
\end{cases}
\end{equation}
 We first note that there  two possible subcases under the Case 1).\\(1a)~If $v(x_{0})=u(x_{0})$, then we have
\[u(x_{0})-\phi(x_{0})=v(x_{0})-\phi(x_{0})\leq v(x)-\phi(x)\leq u(x)-\phi(x).\]

In this case, the desired  differential inequality is seen to be valid for $\phi$ because $u$ satisfies such an inequality in the viscosity sense. 

\medskip

(1b) Suppose instead  that $v(x_{0})=2K$, then we have
\[2K-\phi(x_{0})=v(x_{0})-\phi(x_{0})\leq v(x)-\phi(x)\leq 2K-\phi(x)\]
and conclusion in this case   follows  from the extrema conditions   for $\phi$.  Similarly the Neumann condition when  Case 2) holds is seen to be satisfied. 

\medskip

As in \cite{IS}, for a given  $\delta>0,$ we now  consider the inf-convolution of $v$ defined as follows:
\[v_{\epsilon}(x)=\inf_{y\in~\overline{Q^{+}_{1-\widetilde{\delta}}}}\Big(v(y)+\frac{1}{2\epsilon}|y-x|^{2}\Big),\]
where $\tilde{\delta}=\delta/2.$ For any $x\in Q^{+}_{1},$ using the fact that $v_{x_n} \leq 0$, it follows in a standard way that the infimum above  will be achieved at any point $y_{0}\in \overline{Q^{+}_{1-\tilde{\delta}}}\setminus Q_{1}^{0}$. See for instance the proof of  Lemma 5.2 in \cite{MS}.

We now make the following claim. 

\medskip

\textbf{Claim:}~ For any $\epsilon>0$ satisfying $2\sqrt{2K\epsilon}<\delta/4,$ $v_{\epsilon}$ is supersolution to the following problem
\begin{equation}\label{ap}
\left\{
\begin{aligned}{}
\mathcal{P}^{-}(D^{2}v_{\epsilon}, Dv_{\epsilon})&\leq1~\text{in}~Q^{+}_{1-\delta},\\
(v_{\epsilon})_{x_n}&\leq0~\text{on}~Q_{1-\delta}^{0}.
\end{aligned}
\right.
\end{equation}

The proof of this claim follows exactly the same way as that of Lemma 5.3 in \cite{MS} and so we skip the details.  Then by noting that  $v_{\epsilon}$ is semiconcave and satisfies \eqref{ap}, we can now apply the conclusion of \emph{Step 1} to $v_{\ve}$ and then  by a  limiting argument as in the proof of Proposition 3.4 in \cite{IS}, we thus conclude that the assertion in \emph{Step 2 } holds.

\medskip

\emph{Step 3:}
Finally the fact that the  conclusion of Theorem \ref{t1a} holds when $u$ is a semiconcave viscosity supersolution of \eqref{m12*} follows by repeating the interior arguments as in the proof of Proposition 3.5 in \cite{IS}. 
Note that the Neumann condition $u_{x_n} \leq 0$ ensures that  as in \emph{Step 1} that the minimum in \eqref{e0} is attained on the set $Q_{1}^{+} \setminus \{x_n=0\}$. This finishes the proof.\end{proof}
\subsection*{Barrier function and doubling type lemma}
As mentioned in the introduction,  since our proof of the $L^{\ve}$ estimate  relies on Calderon-Zygmund decomposition instead of the  growing Ink-spot lemma  as employed in \cite{IS} because of certain technical obstructions, therefore  we need a somewhat adjusted doubling type lemma  as stated in Theorem \ref{double} below.

\medskip

Similar to \cite{IS}, we consider the function 
\[
V(x)=|x|^{-\sigma}+\epsilon_{n,\sigma} x_{n}=h(x)+\epsilon_{n,\sigma} x_{n},
\]
 where $\epsilon_{n,\sigma}>0$ is a positive constant depending on $\sigma$ and $n$  and  will be subsequently chosen.  We let $r= |x|$. As the reader will see, unlike  the interior case as in \cite{IS}, this  additional term $\ve_{n, \sigma} x_n$  accounts for the adjustment required due to the presence of the  Neumann  condition.  Using $D^2V=D^2h$ and also the fact that  $h$ is radial, we can  assert that  the eigenvalues of $D^{2}h(x),$ for $x\not=0,$ are $-\sigma r^{-\sigma-2}$ with multiplicity $n-1$ and $\sigma(\sigma+1)r^{-\sigma-2}$ with multiplicity 1.
Therefore, for $x\not=0,$ we have
\begin{equation}
\mathcal{P}^{-}(D^{2}V(x), DV(x))=\lambda\sigma(\sigma+1)r^{-\sigma-2}-\Lambda(n-1)\sigma r^{-\sigma-2}-\Lambda|\epsilon_{n,\sigma}e_{n}-\sigma r^{-\sigma-2}x|,
\end{equation}
as long as $|DV(x)|\geq\gamma.$ A standard calculation shows
\begin{equation}\label{lar17}
\begin{aligned}{}
\mathcal{P}^{-}(D^{2}V(x), DV(x))&=\lambda\sigma(\sigma+1)r^{-\sigma-2}-\Lambda(n-1)\sigma r^{-\sigma-2}-\Lambda|\epsilon_{\sigma}e_{n}-\sigma r^{-\sigma-2}x|\\
&=\sigma r^{-\sigma-2}\Big(\lambda(\sigma+1)-\Lambda(n-1)-\Lambda|(\frac{\epsilon_{n,\sigma}}{\sigma})r^{\sigma+2}e_{n}-x|\Big)\\
&\geq\sigma r^{-\sigma-2}\Big(\lambda(\sigma+1)-\Lambda(n-1)-\frac{\Lambda\epsilon_{n,\sigma}r^{\sigma+2}}{\sigma}-\Lambda r\Big).
\end{aligned}
\end{equation}
The next lemma corresponds to the spread of the positivity set needed to apply the  Calderon-Zygmund type lemma in the upper half space.  
\begin{thm}\label{double}
There exists an $\epsilon_{0}>0$ depending on the ellipticity and dimension such that if  $\gamma \leq \epsilon_0$, $u:Q^{+}_{8n}\longrightarrow\R,$
satisfies
\begin{equation}\label{m12}
\left\{
\begin{aligned}{}
&u\geq0~~\text{in}~~Q^{+}_{8n},\\
\mathcal{P}^{-}_{\lambda, \Lambda, \gamma} (D^{2}u, Du)&\leq1~\text{in}~Q^{+}_{8n},\\
u_{x_n}&\leq0~\text{on}~Q_{8n}^0,\\
\end{aligned}
\right.
\end{equation}
 and $u>K$ on $Q^{+}_{\frac{1}{16\sqrt{n}}}$ for a sufficiently large $K$(depending on $\Lambda,\lambda,n,\gamma$), then $u>1$ in $Q^{+}_{3}.$
\end{thm}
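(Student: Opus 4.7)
The plan is to prove Theorem \ref{double} by a barrier-comparison argument built from the function $V(x) = |x|^{-\sigma} + \epsilon_{n,\sigma} x_n$ already introduced above. Positivity is to be transported from the inner cube $Q_{1/(16\sqrt n)}^+$, where $u > K$, out to $Q_3^+$ by exhibiting a scaled shift $W := A(V - V_0)$ that lies below $u$ on the ``parabolic'' boundary of the annular region $\Omega := Q_{8n}^+ \setminus \overline{Q_{1/(16\sqrt n)}^+}$ while remaining $\geq 1$ throughout $Q_3^+$. The first step is to choose $\sigma = \sigma(n,\lambda,\Lambda)$ so large that $\lambda(\sigma+1) - \Lambda(n-1) > 2 + \Lambda \cdot 8n\sqrt n$, and then to pick $\epsilon_{n,\sigma}>0$ small enough that two things hold: the term $\Lambda \epsilon_{n,\sigma} r^{\sigma+2}/\sigma$ in \eqref{lar17} stays at most $1$ for every $r \leq 8n\sqrt n$; and the unique point at which $DV$ vanishes, which one checks lies on the positive $x_n$-axis at radius $r^{*} = (\sigma/\epsilon_{n,\sigma})^{1/(\sigma+1)}$, is pushed strictly outside $\overline{Q_{8n}^+}$. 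With these two choices, \eqref{lar17} immediately gives $\mathcal{P}^-(D^2V, DV) \geq c_1 \sigma r^{-\sigma-2}$ on $\overline{\Omega}$, and a direct computation via $|DV|^2 \geq (\sigma r^{-\sigma-1} - \epsilon_{n,\sigma})^2$ furnishes a uniform positive lower bound $\mu$ for $|DV|$ there; setting $\epsilon_0 := \mu/2$ then guarantees $|DV| \geq \gamma$ whenever $\gamma \leq \epsilon_0$, so the $\mathcal{P}^-$ inequality is genuinely in force.

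Next, let $V_0 := \sup V$ over $\partial Q_{8n}^+ \cap \{x_n > 0\}$, which one easily verifies satisfies $V_0 < \inf_{Q_3^+} V$ for $\sigma$ large (the singular term $|x|^{-\sigma}$ dominates and the linear perturbation is exponentially small). Define $W := A(V - V_0)$ with $A \geq 1$ chosen so large that both $A(\inf_{Q_3^+} V - V_0) \geq 1$ and $A \mathcal{P}^-(D^2V, DV) \geq 2$ hold on $\overline{\Omega}$, and take $K := \sup_{\partial Q_{1/(16\sqrt n)}^+ \cap Q_{8n}^+} W$, a finite quantity depending only on $n, \lambda, \Lambda, \gamma$. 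By construction one has simultaneously: $W \leq 0$ on the lateral and top faces of $\partial Q_{8n}^+$, $W \leq K < u$ on $\partial Q_{1/(16\sqrt n)}^+ \cap Q_{8n}^+$, $W \geq 1$ on $Q_3^+$, $\mathcal{P}^-(D^2 W, DW) \geq 2$ on $\overline{\Omega}$, and $W_{x_n}(x) = A\epsilon_{n,\sigma} > 0$ at every $x \in Q_{8n}^0$. I would then run the standard viscosity comparison for $u - W$ on $\overline{\Omega}$: any negative minimum is ruled out on the outer and inner faces by the pointwise inequalities just listed; an interior minimum would make $W$ (shifted by a constant) a $C^2$ test function touching $u$ from below, forcing $\mathcal{P}^-(D^2W, DW) \leq 1$ and contradicting $\geq 2$; a minimum on $Q_{8n}^0$ would force either the same interior contradiction or $W_{x_n} \leq 0$ in view of the viscosity supersolution property of $u$ at the Neumann boundary, and both alternatives fail. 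Therefore $u \geq W$ on $\overline{\Omega}$, and in particular $u \geq 1$ on $Q_3^+$.

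The delicate point is the joint calibration of $\sigma$ and $\epsilon_{n,\sigma}$. The perturbation $\epsilon_{n,\sigma} x_n$ is indispensable because it is the only mechanism producing the strict inequality $W_{x_n} > 0$ on $\{x_n = 0\}$ that defeats the viscosity Neumann condition $u_{x_n} \leq 0$ in the comparison step. Yet the very same term creates the critical point $x = r^{*} e_n$ at which $DV = 0$ and the operator $\mathcal{P}^-(D^2V, DV)$ collapses to $-\infty$, making $V$ useless as a strict subsolution in any region containing that point. Driving $r^{*}$ safely past $\overline{Q_{8n}^+}$ forces $\epsilon_{n,\sigma}$ to be exponentially small in $\sigma$, and this in turn is what dictates the smallness of $\epsilon_0$ on $\gamma$ and the largeness of the threshold $K$ appearing in the statement.
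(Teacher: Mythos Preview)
Your argument is correct and follows essentially the same barrier-comparison strategy as the paper: both proofs build a subsolution of the form $A(|x|^{-\sigma}+\epsilon_{n,\sigma}x_n)+\text{const}$, use the linear perturbation $\epsilon_{n,\sigma}x_n$ to produce a strictly positive $x_n$-derivative that defeats the viscosity Neumann condition, choose $\sigma$ large to make $\mathcal{P}^-$ positive, and then scale so that the barrier lies below $u$ on the annular boundary yet exceeds $1$ on $Q_3^+$. The only cosmetic differences are that the paper passes to concentric balls via $B_{1/(32\sqrt n)}^+\subset Q_{1/(16\sqrt n)}^+$ and $Q_3^+\subset B_{3\sqrt n}^+\subset B_{4n}^+$ (which matches the radial level sets of $|x|^{-\sigma}$), writes the barrier with explicit constants $\epsilon_{n,\sigma}=(128n)^{-8(\sigma+2)}$ and shift $(4n)^{-\sigma}+\epsilon_{n,\sigma}\cdot 8n\sqrt n$, and absorbs the role of your $A$ into $K$ itself, letting $K$ depend on $\gamma$ rather than imposing $\gamma\le\epsilon_0$ up front; your version instead fixes $\epsilon_0=\mu/2$ and obtains a $\gamma$-independent $K$, which is an equally valid and slightly cleaner packaging.
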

\begin{proof}
We first  observe that
\[B^{+}_{\frac{1}{32\sqrt{n}}}\subset Q^{+}_{\frac{1}{16\sqrt{n}}}~~\text{and}~~Q^{+}_{3}\subset B^{+}_{3\sqrt{n}}\subset B^{+}_{4n}\subset Q^{+}_{8n}.\]
Then we  consider the following barrier function:
\begin{equation}
B(x)=\frac{K}{2[32\sqrt{n}]^{\sigma}}\Big[|x|^{-\sigma}-(4n)^{-\sigma}+\epsilon_{n,\sigma}(x_{n}-8n\sqrt{n})\Big],
\end{equation}
with $\epsilon_{n,\sigma}=(128n)^{-8(\sigma+2)}$.\\
For any value of $K\geq 1,$ we note that  $B$ has the following properties:
\begin{enumerate}
\item{}~$B(x)\leq0$~~~for any $|x|\geq4n.$
\item{} $B(x)\leq \frac{K}{2}<K$ for any $x\in \partial B_{\frac{1}{32\sqrt{n}}}.$ In particular for any $x\in\partial B_{\frac{1}{32\sqrt{n}}}\cap\{x_{n}>0\},$ $B(x)<K.$
\item{} For any $x\not=0,$ and $x_{n}=0,$ $\frac{\partial B}{\partial x_n}(x)=\frac{\epsilon_{n,\sigma}K}{2[32\sqrt{n}]^{\sigma}}.$ In particular, 
\begin{equation}\label{bn}
\frac{\partial B}{\partial x_n}(x)>0
\end{equation}
 for  $x\not=0$.
\end{enumerate}
We now choose $\sigma$ sufficiently large such that the following holds:
\begin{equation}
\lambda(\sigma+1)-\Lambda(n-1)-\frac{\Lambda(8n\sqrt{n})^{\sigma+2}}{\sigma(128n)^{8(\sigma+2)}}-\Lambda(8n\sqrt{n})\geq2.
\end{equation}
Having chosen $\sigma,$ it is always possible to choose $K\geq1$~(sufficiently large), such that following inequalities hold:
\begin{enumerate}
\item{}~$|DB(x)|\geq\gamma$ in $Q^{+}_{8n},$
\item{}~~$|B(x)|>1$ in $B^{+}_{3\sqrt{n}},$
\item{}~~$\mathcal{P}^{-}(D^{2}B, DB)\geq2~\text{in}~Q^{+}_{8n}.$
\end{enumerate}
Now, we  claim  that $u\geq B$ in $(B_{4n}\setminus B_{\frac{1}{16\sqrt{n}}})\cap\{x_{n}\geq0\}.$  If not, then there exists an $z_{0}\in(B_{4n}\setminus B_{\frac{1}{16\sqrt{n}}})\cap\{x_{n}\geq0\}$ which corresponds to a  negative minimum of $u-B$ in that same set. Then there are two possibilities
\begin{enumerate}
\item{}~~if $(z_{0})_{n}=0$, then  we  must have $\frac{\partial  B}{\partial x_n} \leq 0$ which in view of \eqref{bn} above is not possible.
\item{}~~if $z_{0}$ is an interior point, then we  have that  $ \mathcal{P}^{-}(D^{2}B(z_{0}), DB(z_{0}))\geq2,$ which is again a contradiction.
\end{enumerate}
This proves the claim. Therefore  for $\epsilon=\min_{B^{+}_{\frac{1}{16\sqrt{n}}}}(u/K-1),$ we  obtain $u\geq(1+\epsilon)K>1$ in $B^{+}_{3\sqrt{n}}.$
\end{proof}
As a consequence, we have the following corollary which is the key ingredient in our proof of $L^{\epsilon}$ estimate.
\begin{cor}\label{dli}
There exist small constants $\epsilon_{0}>0$ and $\delta>0$ and a large constant $K>0,$ such that if $\gamma \leq\epsilon_{0},$ then for any  lower semicontinuous function $u:\overline{Q^{+}_{8n}}\longrightarrow\R,$ satisfying
\begin{equation}\label{m121}
\left\{
\begin{aligned}{}
&u\geq0~~\text{in}~~Q^{+}_{8n},\\
\mathcal{P}^{-}(D^{2}u, Du)&\leq1~\text{in}~Q^{+}_{8n},\\
u_{x_n}&\leq0~\text{on}~Q_{8n}^0,\\
|\{u>K\}\cap Q^{+}_{1}|&>(1-\delta)|Q^{+}_{1}|,
\end{aligned}
\right.
\end{equation}
we have $u>1$ in $Q^{+}_{3}.$
\end{cor}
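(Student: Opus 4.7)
The plan is to combine Theorem \ref{t1a} with the spreading result Theorem \ref{double} via a simple vertical rescaling. Theorem \ref{t1a} turns the measure hypothesis on $Q^+_1$ into a pointwise bound $u > 1$ on the small cube $Q^+_{1/(16\sqrt{n})}$, while Theorem \ref{double} upgrades a pointwise lower bound of size $K_1$ on $Q^+_{1/(16\sqrt{n})}$ to $u > 1$ on $Q^+_3$. The core task is therefore to arrange the intermediate bound from Theorem \ref{t1a} to be at least as large as the constant required by Theorem \ref{double}.

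Let $K_0$ denote the constant from Theorem \ref{t1a}, let $K_1 \geq 1$ denote the (large) constant from Theorem \ref{double}, take $\delta$ equal to the $\delta$ of Theorem \ref{t1a}, and set $K := K_0 K_1$. Consider $v := u/K_1$. Since $\mathcal{P}^-$ is positively $1$-homogeneous jointly in its two arguments and since $|Dv| \geq \gamma$ forces $|Du| \geq K_1\gamma \geq \gamma$, on the set $\{|Dv| \geq \gamma\}$ we have
\[
\mathcal{P}^-(D^2 v, Dv) \;=\; \frac{1}{K_1}\,\mathcal{P}^-(D^2 u, Du) \;\leq\; \frac{1}{K_1} \;\leq\; 1.
\]
Together with $v \geq 0$ in $Q^+_{8n}$, $v_{x_n} \leq 0$ on $Q^0_{8n}$, and
\[
\bigl|\{v > K_0\} \cap Q^+_1\bigr| \;=\; \bigl|\{u > K\} \cap Q^+_1\bigr| \;\geq\; (1-\delta)\,|Q^+_1|,
\]
the hypotheses of Theorem \ref{t1a} hold for $v$ on the sub-cube $Q^+_1 \subset Q^+_{8n}$. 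Applying that theorem yields $v > 1$, i.e.\ $u > K_1$, throughout $Q^+_{1/(16\sqrt{n})}$.

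Finally, the three pointwise/differential hypotheses of Theorem \ref{double} are already met by $u$ on $Q^+_{8n}$, and we have just produced the remaining ingredient $u > K_1$ on $Q^+_{1/(16\sqrt{n})}$. Hence $u > 1$ on $Q^+_3$ as claimed. The one subtlety to watch is that the operator $\mathcal{P}^-$ has a \emph{fixed} gradient threshold $\gamma$ that does not rescale with $v = u/K_1$; taking $K_1 \geq 1$ is exactly what guarantees that $\{|Dv| \geq \gamma\} \subset \{|Du| \geq \gamma\}$, so that the pointwise differential inequality for $u$ can be transferred to $v$ where the operator is not $-\infty$. Of course, $\gamma \leq \epsilon_0$ is required so that both prerequisite theorems apply, and the $\epsilon_0$ of the corollary is taken to be the smaller of the two thresholds supplied by Theorems \ref{t1a} and \ref{double}.
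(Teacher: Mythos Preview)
Your proof is correct and follows essentially the same approach as the paper: rescale by the constant from Theorem~\ref{double}, apply Theorem~\ref{t1a} to the rescaled function to obtain the pointwise lower bound on the small cube, and then invoke Theorem~\ref{double} to spread this bound to $Q^{+}_{3}$. Your added remarks on why the gradient threshold behaves well under the rescaling (using $K_1 \geq 1$) are a welcome clarification that the paper leaves implicit.
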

\begin{proof}
Let $K_{1}$ and $K_{2}$ be the (renamed) constants from Theorems \ref{t1a}~and~\ref{double} respectively. We claim that  $K$ can be taken to be  $ K_1 K_2$.  With such a choice of $K$,  we note that the function $v=u/K_{2}$ satisfies the assumption of Theorem \ref{t1a}. From there  we conclude that $v>1$ in $Q^{+}_{\frac{1}{16\sqrt{n}}},$ i.e, $u>K_{2}$ in $Q^{+}_{\frac{1}{16\sqrt{n}}}.$ Now we can apply the  doubling  result Theorem \ref{double} to finally obtain that  $u>1$ in $Q^{+}_{3}.$
\end{proof}

We now state and prove  the  boundary version of the $L^{\ve}$ estimate.
\begin{thm}\label{bounepsilon}
There exists a small enough $\ve,  \ve_0>0$ such that  if $\gamma \leq \ve_0$, then  for any $u$ satisfying
\begin{equation}\label{m12121}
\left\{
\begin{aligned}{}
&u\geq0~~\text{in}~~Q^{+}_{8n},\\
&\mathcal{P}^{-}(D^{2}u, Du)\leq1~\text{in}~Q^{+}_{8n},\\
&u_{x_n}\leq0~\text{on}~Q_{8n}^0,\\
&\inf\limits_{Q^{+}_{3}} u \leq1,
\end{aligned}
\right.
\end{equation}
we have
\begin{align}
|\{u>t\}\cap Q^{+}_{1}|&\leq \tilde{C}t^{-\epsilon},\quad t>0.\label{lin5}
\end{align}
\end{thm}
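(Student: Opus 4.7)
The plan is to prove this $L^{\varepsilon}$ decay by an iterated density decrement built on Corollary \ref{dli}, in the spirit of the Krylov--Safonov/Caffarelli proof via a Calder\'on--Zygmund decomposition. The starting point is the contrapositive of Corollary \ref{dli}: applied directly to $u$ under the given hypothesis $\inf_{Q_3^+}u\leq 1$, it yields the base density estimate
\[
|\{u>K\}\cap Q_1^+|\leq (1-\delta)|Q_1^+|,
\]
where $K$ and $\delta$ are the constants produced there.

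Next I would establish a scaled version for arbitrary upper half cubes. Using Remark \ref{scaling} with $v(y)=u(x_0+ry)/M$, one checks that when $r\leq 1$ and $M\geq 1$ the rescaled $v$ meets the hypotheses of Corollary \ref{dli} (the scaled Pucci threshold $r\gamma/M$ is no larger than $\gamma$, and $r^2/M\leq 1$), provided the underlying equation for $u$ is valid on $Q_{8nr}^+(x_0)$. This yields the localized statement: for any upper half cube $Q_r^+(x_0)$ with $(x_0)_n=0$ sitting in an admissible position, if $|\{u>KM\}\cap Q_r^+(x_0)|>(1-\delta)|Q_r^+(x_0)|$, then $u>M$ throughout $Q_{3r}^+(x_0)$. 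For purely interior cubes inside $Q_1^+$, the analogous propagation statement is obtained by running the same scheme with the interior counterpart from \cite{IS} in place of Corollary \ref{dli}.

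The iteration is then run on the level sets $D_k:=\{u>K^k\}\cap Q_1^+$. I would apply a dyadic Calder\'on--Zygmund decomposition to $D_{k+1}$ inside $Q_1^+$, adapted to the half-cube geometry: each half-cube is split into $2^{n-1}$ smaller boundary half-cubes along $\{x_n=0\}$ and $2^{n-1}$ interior cubes stacked above, with further standard dyadic subdivision of the interior cubes. Stopping at cubes $Q_i$ with $|D_{k+1}\cap Q_i|>(1-\delta)|Q_i|$ while $|D_{k+1}\cap \widetilde Q_i|\leq (1-\delta)|\widetilde Q_i|$ at the parent, and invoking the scaled statement above on each $Q_i$ (choosing the boundary or interior version according to whether $Q_i$ meets $\{x_n=0\}$), one obtains that each parent $\widetilde Q_i$ is contained in $D_k$ up to a null set. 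Since $D_{k+1}\subset \bigcup_i \widetilde Q_i$ modulo measure zero, the density bound at parents together with the bounded overlap of the predecessors yields the geometric decrement $|D_{k+1}|\leq (1-\delta)|D_k|$. Combining with the base case, $|D_k|\leq (1-\delta)^k|Q_1^+|$ for all $k\geq 1$, and a standard dyadic interpolation in $t$ gives $|\{u>t\}\cap Q_1^+|\leq \tilde C t^{-\varepsilon}$ with $\varepsilon=\log(1/(1-\delta))/\log K$.

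The main obstacle will be the rigorous execution of the Calder\'on--Zygmund step in the half-cube setting. Two points require particular care: first, keeping the enlargement $Q_{3r}^+(x_0)$ aligned with the dyadic parent $\widetilde Q_i$ so that the conclusion ``$u>K^k$ on $Q_{3r}^+(x_0)$'' really covers $\widetilde Q_i$ (this forces a fixed dyadic ratio compatible with the factor $3$ produced by Corollary \ref{dli}); and second, ensuring that the domain $Q_{8n}^+$ given in the hypothesis is large enough to host the scaled Corollary \ref{dli} for every dyadic half-cube meeting $Q_1^+$, which may require an initial reduction to a slightly smaller half-cube before starting the iteration. The dichotomy between boundary half-cubes (handled by Corollary \ref{dli}) and interior cubes (handled by the interior counterpart from \cite{IS}) must also be threaded uniformly through the decomposition at every scale.
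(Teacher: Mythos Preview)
Your overall strategy matches the paper's: a Calder\'on--Zygmund decomposition in the half-space combined with Corollary~\ref{dli} and its interior analogue from \cite{IS}. However, the dichotomy you propose has a genuine gap. An interior dyadic cube $Q$ of side $r$ sitting close to $\{x_n=0\}$ (center at height comparable to $r$) cannot be handled by either branch: it is not a half-cube at the boundary, so Corollary~\ref{dli} does not apply as stated, and the interior version from \cite{IS} requires the differential inequality on the full cube of side $8nr$ centered at $Q$, which extends below $\{x_n=0\}$. In your decomposition such cubes arise at every generation---e.g., the $2^{n-1}$ interior cubes produced immediately above each split boundary half-cube all have center height $\tfrac{3r}{2}$ while the interior lemma needs room $4nr$ below. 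Your closing sentence flags this difficulty but does not resolve it.

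The paper's resolution is substantive and is precisely what forces the replacement of $K^m$ by $(C_0K)^m$ in the level-set sequence. Their Case~2[ii] treats an interior dyadic cube $Q$ with $(x_0)_n\leq 4n/2^i$ as follows: assuming $|A_m\cap Q|>(1-\delta/2)|Q|$, one first invokes the \emph{interior} $L^\varepsilon$ estimate from \cite{IS} (used as a black box on a slightly larger cube $Q^{\delta_0}$ whose distance to $\{x_n=0\}$ is $\sim\delta_0/2^i$) to force $u/(C_0K)^{m-1}>K$ throughout $Q^{\delta_0}$, provided $C_0$ is chosen large enough. Since $Q^{\delta_0}$ fills all but a $(1-\delta)$-fraction of the smallest half-cube $\hat Q^+$ based on $\{x_n=0\}$ containing it, Corollary~\ref{dli} applied to $\hat Q^+$ then yields $u>(C_0K)^{m-1}$ on $3\hat Q^+\supset\tilde Q$, contradicting $\tilde Q\not\subset A_{m-1}$. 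This interplay between the extra constant $C_0$ and the interior $L^\varepsilon$ bound is the missing ingredient in your sketch; without it the induction does not close.
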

\begin{proof}
In order to prove \eqref{lin5}, note that it suffices to show that  for  $\delta>0$ as in Corollary \ref{dli},
\begin{equation}\label{lin6}
|\{u>(C_{0}K)^{m}\}\cap Q^{+}_{1}|\leq  (1-\delta/2)^{m} |Q_{1}^+|
\end{equation}
for $K$ as in Corollary \ref{dli} and $C_0$  sufficiently large   which will be  chosen below. For $m=1,$ since $\inf\limits_{Q^{+}_{3}} u \leq1$ so by Corollary \ref{dli}
 we find
 \[|\{u>K\}\cap Q^{+}_{1}|\leq(1-\delta)|Q^{+}_{1}|.\]

Now assume that the result is true for $m-1,$ that is,
 \begin{equation}
 |\{u>(C_{0}K)^{m-1}\}|\leq (1-\frac{\delta}{2})^{m-1}|Q^{+}_{1}|.
 \end{equation}
Let us set
 \[A_{m}=\{u>(C_{0}K)^{m}\}\cap Q^{+}_{1}~~~\text{and}~~~A_{m-1}=\{u>(C_{0}K)^{m-1}\}\cap Q^{+}_{1}.\]
 We claim that
 \begin{equation}\label{cz}
 |A_{m}| \leq (1-\delta/2) |A_{m-1}|.
 \end{equation}
If not, then by the  Calderon-Zygmund lemma applied to cubes in the upper half space, we have that there exists  a dyadic cube
$Q$  such that
\begin{equation}\label{contra}
|A_{m}\cap Q|>(1-\delta/2)|Q|
\end{equation}

and $2 Q= \tilde{Q}\not\subset A_{m-1},$ i.e. there is a point $x_{1}\in \tilde Q$ such that $u(x_{1})\leq (C_{0}K)^{m-1}.$  Let us consider the following cases:\\

\medskip
\textbf{Case ~1:}~Suppose $Q= Q_{\frac{1}{2^i}} (x_0)$ such that $|(x'_{0},(x_{0})_{n})-(x'_{0},0)|\geq \frac{4n}{2^{i}}.$ In this case, it is easy to observe that $Q_{\frac{8n}{2^{i}}} (x_0) \subset Q^{+}_{8n}.$ Therefore, the rescaled  function $\tilde{u}:Q_{8n}\longrightarrow\R,$ defined by $\tilde{u}(y)=\frac{1}{(C_{0}K)^{m-1}}u(x_{0}+\frac{e_{n}}{2^{i+1}}+\frac{y}{2^{i}})$ satisfies the following differential inequality
\begin{equation}\label{m121}
\left\{
\begin{aligned}{}
&\tilde{u}\geq0~~\text{in}~~Q_{8n},\\
\mathcal{P}^{-}(D^{2}\tilde{u}, D\tilde{u})&\leq1~\text{in}~Q_{8n},\\
\tilde{u}(y_{1})&\leq 1 ~~\text{for~~some}~~y_{1}\in~Q_{3},
\end{aligned}
\right.
\end{equation}
for a smaller $\gamma$ in view of the discussion in Remark \ref{scaling}. Therefore, we can employ the interior version of  Corollary \ref{dli} to conclude that
\begin{align}
|\{\tilde{u}>K\}&\cap Q_{1}|\leq (1-\delta/2)|Q_{1}|,
\end{align}
which in particular implies
\begin{align}
|A_m \cap Q|= |\{u>(C_{0}K)^{m}\}&\cap Q|\leq (1-\delta/2)|Q|.
\end{align}
 This contradicts \eqref{contra}.

 \medskip

\textbf{Case~2}: Now suppose instead  that either $Q=Q_{\frac{1}{2^i}} (x_0)$ or $Q= Q^{+}_{\frac{1}{2^{i}}}(x_{0})$ with  
\[|(x'_{0},(x_{0})_{n})-(x'_{0},0)|\leq \frac{4n}{2^{i}}.\]
In this case, due to the nature of the Calderon-Zygmund decomposition for cubes in the upper half space, there are two possibilities\\
$[i]~~(x_{0})_{n}=0$ or\\
$[ii]~(x_{0})_{n}\geq\frac{1}{2^{i}}.$\\
In case 2  [i], 
we again consider the rescaled function $\tilde{u}~:Q^{+}_{8n}\longrightarrow\R$ defined by
\begin{equation}
\tilde{u}(y)=\frac{1}{(C_{0}K)^{m-1}}u(x_{0}+\frac{y}{2^{i}}),
\end{equation}
which satisfies the following differential inequality
\begin{equation}\label{m321}
\left\{
\begin{aligned}{}
&\tilde{u}\geq0~~\text{in}~~Q^{+}_{8n},\\
\mathcal{P}^{-}(D^{2}\tilde{u}, D\tilde{u})&\leq1~\text{in}~Q^{+}_{8n},\\
\tilde{u}_{x_n}&\leq 0~\text{on}~Q_{8n}^0,\\
\text{and}~~\tilde{u}(z_{1})&\leq 1~~\text{for~some}~z_{1}\in Q^{+}_{3}
\end{aligned}
\right.
\end{equation}
Therefore, by  Corollary \ref{dli} we note
\begin{align}
|\{\tilde{u}>K\}&\cap Q^{+}_{1}|\leq (1-\delta/2)|Q^{+}_{1}|
\end{align}
or equivalently,
\begin{align}
|\{u>(C_{0}K)^{m}\}&\cap Q|\leq (1-\delta/2)|Q|,
\end{align}
which  contradicts \eqref{contra} as before.

\medskip

Instead if   Case 2 [(ii)] happens, i.e. say  $(x_{0})_{n}\geq \frac{1}{2^{i}}.$ Now since we also have that $(x_{0})_{n} \leq 4n/2^i$,  therefore, given $\delta_0$ such that  $0<\delta_{0}< 1$,  there exists a cube  $ Q^{\delta_{0}}\subset Q^{+}_{1}$ of size comparable to  $Q$  which contains $Q$ such that $\text{dist}(Q^{\delta_{0}},~\{x_{n}=0\})= \delta_{0}/2^i.$ We now make the following claim. 

\medskip

\textbf{Claim:}~If $C_{0}$ is large enough, then the function
\[v(y)=\frac{u(y)}{(C_{0}K)^{m-1}}>K~~\text{in} ~~Q^{\delta_{0}}.\]
Proof of the claim: Suppose  on the contrary that  there exists a point $y_{0}\in Q^{\delta_{0}}$ such that
\[v(y_{0})\leq K.\]
Then the function defined by $w(z)=\frac{v(z)}{K},$ satisfies $w(y_{0})\leq1.$ So by the interior $L^{\epsilon}$ estimate we have
\[|\{w>t\}\cap Q^{\delta_{0}}|\leq C(\epsilon, \delta_0)t^{-\epsilon} |Q^{\delta_0}|.\]
Note that such an estimate is a consequence of the interior $L^{\ve}$ estimate in \cite{IS}  followed by a standard covering argument.  We also note that the constant $C= C(\ve, \delta_0)$ can be chosen to be independent of $i$ in view of  scale invariance of the estimates ( note that the size of both $Q^{\delta_0}$ as well as $Q$ are comparable to $\frac{1}{2^i}$) , see for instance Remark \ref{scaling}.
Therefore, in particular,
\begin{align}
|\{w>C_{0}\}\cap Q^{\delta_{0}}|&\leq C(\epsilon, \delta_0)C_{0}^{-\epsilon}|Q^{\delta_0}|.\label{contra3}
\end{align}

Now we note that since
\[\{w>C_{0}\}=\{v>C_{0}K\}=\{u>(C_{0}K)^{m}\},\]

therefore this implies that the following holds,
\[|\{u>(C_{0}K)^{m}\}\cap Q^{\delta_{0}}|\leq C(\epsilon, \delta_0)C_{0}^{-\epsilon}|Q^{\delta_0}|.\] Then using \eqref{contra}, we have
\begin{equation}\label{pl}
|\{w>C_{0}\}\cap Q|=|\{u>(C_{0}K)^{m}\}\cap Q|>(1-\delta/2)|Q|.\end{equation}

Now, we choose the smallest cube $\hat{Q}^{+}$ with base at $\{x_n=0\}$ which contains  $Q^{\delta_0}$  and we also set $\tilde{C}(\delta_{0})=\frac{|Q^{\delta_{0}}|}{|\hat{Q}^+|}$.
Note that  we  have that  $\tilde{C}(\delta_{0})\rightarrow1$ as $\delta_{0}\rightarrow0.$  Thus we can  choose $\delta_{0}$ sufficiently small such that $\tilde{C}(\delta_{0})>(1-\delta),$ where $\delta$ is from Corollary \ref{dli}. We then  let $C(\delta_{0})=|Q|/|Q^{\delta_{0}}|$.  It is easy to see that $C(\delta_0)$ is bounded from below uniformly as $\delta_0 \to 0$. Therefore  we have from \eqref{pl}
\begin{equation}\label{cotra2}
|\{w>C_{0}\}\cap Q^{\delta_{0}}|>(1-\delta/2)|Q|=(1-\delta/2)C(\delta_{0})|Q^{\delta_{0}}|.
\end{equation}
At this point, if we  choose $C_{0}$ sufficiently large such that $2C(\epsilon, \delta_0)C_{0}^{-\epsilon}<(1-\delta/2)C(\delta_{0}),$ then from  \eqref{contra3}  we obtain
\[
|\{w>C_{0}\}\cap Q^{\delta_{0}}| < (1-\delta/2)C(\delta_{0})|Q^{\delta_{0}}|
\]

 which contradicts \eqref{cotra2}. This proves the claim.\\
Consequently, we have
\begin{align}
|\{v>K\}\cap \hat{Q}^+|&\geq |\{v>K\}\cap Q^{\delta_{0}}|~~~\hspace{0.5cm}\big(\text{since}~~Q^{\delta_{0}}\subset \hat{Q}^+\big)\\
&=|Q^{\delta_{0}}|~\hspace{0.8cm}~\big(\text{since}~~v>K~~\text{in}~~Q^{\delta_{0}}\big)\notag\\
&=\tilde{C}(\delta_{0})|\hat{Q}^+|\notag\\
&>(1-\delta)|\hat{Q}^+|\notag.
\end{align}
Therefore by invoking Corollary \ref{dli}, we    conclude that $v> 1$ in $3 \hat{Q}^+$  and hence $v > 1$ in $\tilde Q$ since $\tilde Q \subset 3 \hat{Q}^+$. Now given that $A_{m-1} = \{u> (C_0 K)^{m-1}\} \cap Q_1^+= \{v>1\} \cap Q_1^+$, therefore this contradicts the fact that $\tilde Q \not \subset A_{m-1}$.  The conclusion of the  Theorem thus follows.
\end{proof}

We also need the following uniform estimate as in Theorem \ref{unif} below  which is a consequence of  a   scaled version of the  above $L^{\ve}$ estimate. Such an estimate plays a crucial role  in the proof of  H\"older regularity of the solutions upto the boundary similar to that in the interior case as in \cite{IS}.  Before stating such a result, we make the following important  remark.

\begin{rem}\label{cot}
Given $\epsilon_0>0$ as  in Theorem \ref{bounepsilon},    we will choose $C_1$   large enough in the hypothesis of  Theorem \ref{unif} below  such that $\epsilon_0>\frac{2}{C_{1}\Lambda}$ where $\Lambda$ is the ellipticity upper bound.
\end{rem}
\begin{thm}\label{unif}
There exist small constants $\tilde{\epsilon}_{0}, c_0>0$ and  $\alpha, r_0 \in (0,1)$ such that if $\gamma\leq \tilde{\epsilon}_{0}$,  then for  any lower semicontinuous function $u:B^{+}_{(4n)r}:\longrightarrow\R$   satisfying the following  differential inequalities for $r \leq r_0,$
\begin{equation}\label{m321}
\left\{
\begin{aligned}{}
&u\geq0~~\text{in}~~B^{+}_{(4n)r},\\
\mathcal{P}^{-}(D^{2}u, Du)&\leq\epsilon_{1}/2~\text{in}~B^{+}_{(4n)r},\\
u_{x_n}&\leq g~\text{on}~B_{(4n)r}^0,\\
\|g\|_{L^{\infty}(B_{4n}^0)}&\leq \frac{\epsilon_{1}}{C_{1}\Lambda}\\
\text{and}~|\{u>r^{\alpha}\}\cap Q^{+}_{r}|&\geq\frac{1}{2}|Q^{+}_{r}|,
\end{aligned}
\right.
\end{equation}
we have,
\begin{equation}\label{lar8}
u> c_0r^{\alpha}  \\\\
\end{equation}
\text{in $Q^{+}_{3r}$.} In particular, $u >  c_0 r^\alpha$ in $B_r^+$.
\end{thm}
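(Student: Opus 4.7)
The approach is to argue by contradiction, combining an affine shift that homogenizes the Neumann condition with a scaled application of the $L^{\epsilon}$ estimate of Theorem~\ref{bounepsilon}. Suppose the conclusion fails, so that there exists $y_{0}\in Q^{+}_{3r}$ with $u(y_{0})\leq c_{0}r^{\alpha}$ for some $c_{0}>0$ that will be fixed small at the end.

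\emph{Step 1 (Neumann correction).} First, I would set
\[
\tilde{u}(x)=u(x)-\|g\|_{L^{\infty}(B_{(4n)r}^{0})}\bigl(x_{n}-4nr\bigr), \qquad x\in \overline{B^{+}_{(4n)r}}.
\]
Then $\tilde{u}\geq u\geq 0$ in $B^{+}_{(4n)r}$ and $\tilde{u}_{x_{n}}=u_{x_{n}}-\|g\|_{L^{\infty}}\leq 0$ on $B^{0}_{(4n)r}$. Since $D^{2}\tilde{u}=D^{2}u$ and $D\tilde{u}=Du-\|g\|_{L^{\infty}}e_{n}$, the triangle inequality gives
\[
\mathcal{P}^{-}(D^{2}\tilde{u},D\tilde{u})\leq \mathcal{P}^{-}(D^{2}u,Du)+\Lambda\|g\|_{L^{\infty}}\leq \frac{\epsilon_{1}}{2}+\frac{\epsilon_{1}}{C_{1}},
\]
valid on the set $\{|D\tilde{u}|\geq\gamma+\|g\|_{L^{\infty}}\}$. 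By Remark~\ref{cot}, taking $C_{1}$ large keeps the right hand side comfortably below $1$.

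\emph{Step 2 (Rescaling to unit size).} Next, I would put $v(y)=\tilde{u}(ry)/r^{\alpha}$ on $\overline{B^{+}_{4n}}$. A direct computation using Remark~\ref{scaling} shows $v\geq 0$, $v_{y_{n}}\leq 0$ on $B^{0}_{4n}$, and
\[
\mathcal{P}^{-}_{\lambda,\Lambda,\gamma''}(D^{2}v,Dv)\leq r^{2-\alpha}\epsilon_{1}\bigl(\tfrac{1}{2}+\tfrac{1}{C_{1}}\bigr),
\]
with threshold $\gamma''=(\gamma+\|g\|_{L^{\infty}})r^{1-\alpha}$. Choosing $\tilde{\epsilon}_{0}\leq\epsilon_{0}$ and $\alpha\in(0,1)$, $r_{0}$ small then forces $\gamma''\leq\epsilon_{0}$ and bounds the right hand side by $1$. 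Since $\tilde{u}\geq u$, the measure hypothesis rescales to $|\{v>1\}\cap Q^{+}_{1}|\geq\tfrac{1}{2}|Q^{+}_{1}|$, while the pointwise bound $|\tilde{u}-u|\leq 4nr\|g\|_{L^{\infty}}$ combined with $r\leq r^{\alpha}$ (as $r\leq 1$, $\alpha\leq 1$) yields $v(y_{0}/r)\leq c_{0}+4n\epsilon_{1}/(C_{1}\Lambda)=:\tilde{c}_{0}$.

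\emph{Step 3 (Closing via the $L^{\epsilon}$ estimate).} I would apply Theorem~\ref{bounepsilon} to the normalization $w=v/\tilde{c}_{0}$, which still satisfies $\mathcal{P}^{-}(D^{2}w,Dw)\leq 1$ (with admissible threshold $\gamma''/\tilde{c}_{0}\leq\epsilon_{0}$) provided $r_{0}$ is small enough that $r^{2-\alpha}\epsilon_{1}(\tfrac{1}{2}+\tfrac{1}{C_{1}})\leq\tilde{c}_{0}$. Since $\inf_{Q^{+}_{3}}w\leq 1$, Theorem~\ref{bounepsilon} yields $|\{v>s\}\cap Q^{+}_{1}|\leq \tilde{C}\,\tilde{c}_{0}^{\,\epsilon}\,s^{-\epsilon}$ for every $s>0$; taking $s=1$ and comparing with the lower bound $\tfrac{1}{2}|Q^{+}_{1}|$ yields a contradiction once $c_{0}$ (and the implicit smallness of $\epsilon_{1}/C_{1}$) is chosen so that $\tilde{c}_{0}<(|Q^{+}_{1}|/(2\tilde{C}))^{1/\epsilon}$. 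This gives $u>c_{0}r^{\alpha}$ throughout $Q^{+}_{3r}$, and the final claim follows from $B^{+}_{r}\subset Q^{+}_{3r}$.

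The main technical obstacle is coordinating the smallness parameters: the Neumann perturbation enters both as a correction to the Pucci inequality and as a shift in the effective gradient threshold, and the rescaling further scales thresholds by $r^{1-\alpha}$. The large constant $C_{1}$ in Remark~\ref{cot} is designed to absorb the first of these, while $\gamma\leq\tilde{\epsilon}_{0}$, $\alpha<1$, and $r\leq r_{0}$ small are what keep $\gamma''$ within the admissible range of Theorem~\ref{bounepsilon}. Once the hierarchy of constants is settled, the contradiction above closes the argument.
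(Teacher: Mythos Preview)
Your argument is correct and follows essentially the same strategy as the paper's own proof: absorb the inhomogeneous Neumann datum into an affine correction, rescale to unit size, and then force a contradiction with the $L^{\epsilon}$ estimate of Theorem~\ref{bounepsilon} against the measure hypothesis. The paper packages the rescaling, the multiplication by a large constant $\tau$ (your $1/\tilde c_{0}$), and the Neumann correction into a single function $\tilde u(x)=\tau r^{-\alpha}u(rx)+\frac{\tau\epsilon_{1}}{\Lambda C_{1}}r^{1-\alpha}(4n-x_{n})$ and argues directly that $\tilde u>1$ in $Q_{3}^{+}$, whereas you separate these into Steps~1--3 and frame the conclusion by contradiction; the content is the same.

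One small difference in the bookkeeping is worth noting. In the paper the threshold $\tilde\gamma=(\gamma\tau+\tfrac{2}{\Lambda C_{1}})r^{1-\alpha}$ is controlled \emph{without} appealing to the smallness of $r_{0}$: the choice $\epsilon_{1}=\tau^{-1}$ together with Remark~\ref{cot} (which fixes $C_{1}$ so that $2/(C_{1}\Lambda)<\epsilon_{0}$) forces $\gamma\tau+\tfrac{2}{\Lambda C_{1}}\leq\epsilon_{0}$ directly, and then $\tilde\epsilon_{0}$ is defined as $\epsilon_{1}(\epsilon_{0}-\tfrac{2}{\Lambda C_{1}})$. In your version, after dividing by the small constant $\tilde c_{0}$ the threshold becomes $\gamma''/\tilde c_{0}$, and you use the factor $r^{1-\alpha}$ (hence smallness of $r_{0}$, now depending on $\tilde c_{0}$) to bring this below $\epsilon_{0}$. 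This is legitimate since all the constants are universal and may be chosen in the order $c_{0},\epsilon_{1},C_{1}\rightsquigarrow\tilde c_{0}\rightsquigarrow r_{0}$, but it is a slightly different mechanism than the paper's, which keeps the threshold control independent of $r_{0}$.
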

\begin{proof}
Let $\tau>1$ be  such that 

\begin{equation}\label{violate}
C\tau^{-\epsilon}<\frac{|Q^{+}_{1}|}{2}
\end{equation}

 where $C$ and $\epsilon>0$ are the constants from the $L^{\epsilon}$ estimate as in Theorem \ref{bounepsilon} above. Now, consider the following function
\[\tilde{u}~:~B^{+}_{4n}\longrightarrow\R,\]
defined by
\begin{equation}\label{lar1}
\tilde{u}(x)=\tau r^{-\alpha}u(rx)+\frac{\tau\epsilon_{1}}{\Lambda C_{1}} r^{1-\alpha}(4n-x_{n}).
\end{equation}
where $\ve_1$ will be chosen later. Then we have that $\tilde u$ satisfies


\begin{equation}\label{m321}
\left\{
\begin{aligned}{}
&\tilde{u}\geq0~~\text{in}~~B^{+}_{(4n)},\\
\mathcal{P}^{-}_{\lambda, \Lambda, \tilde \gamma}(D^{2}\tilde{u}, D\tilde{u})&\leq\big[\frac{\epsilon_{1}\tau}{C_{1}}+\frac{\epsilon_{1}\tau}{2}\big]r^{2-\alpha}~\text{in}~B^{+}_{(4n)},\\
\tilde{u}_{x_n}&\leq 0~\text{on}~B_{4n}^0,\\
\end{aligned}
\right.
\end{equation}
with $\tilde{\gamma}=\big(\gamma\tau +\frac{2\epsilon_{1}\tau}{\Lambda C_{1}}\big)r^{1-\alpha}.$ Furthermore, we have
\begin{equation}\label{lar6}
|\{\tilde{u}>\tau\}\cap Q^{+}_{1}|\geq\frac{1}{2}|Q^{+}_{1}| \geq C \tau^{-\ve}.
\end{equation}
Now let us choose $\ve_1= \tau^{-1}.$ Then we have that
$\tilde{\gamma}=\big(\gamma\tau+\frac{2}{\Lambda C_{1}}\big)r^{1-\alpha}.$ We now fix $\alpha \in (0, 1/2)$. Then by choosing $r_0$ small enough we can ensure that 
\begin{equation}
\mathcal{P}^{-}_{\lambda, \Lambda, \tilde \gamma}(D^{2}\tilde{u}, D\tilde{u}) \leq 1
\end{equation}
Moreover with $\ve_0$ as in Theorem \ref{bounepsilon}, we note that in view of our choice of $C_1$ in Remark \ref{cot}, if we  have 
\[\gamma \leq \tilde{\epsilon_{0}} \myeq\big(\epsilon_{0} \ve_1 -\frac{2\ve_1}{\Lambda C_{1}}\big),\]
 then we can ensure that $\tilde \gamma \leq \ve_0$.

 In such a case,  necessarily we must have 
\begin{equation}\label{lbd}
\tilde u > 1\ \text{in}\ Q^{+}_{3},
\end{equation}
 otherwise by applying the $L^{\ve}$ estimate in  Theorem \ref{bounepsilon}, we will obtain a contradiction to \eqref{lar6}.   We thus obtain from \eqref{lbd} that
\begin{equation}\label{la10}
u>\epsilon_{1}r^{\alpha}-C_{2}\epsilon_{1}r
\end{equation}
\text{in $Q^{+}_{3r}$. }The desired estimate \eqref{lar8} now follows from \eqref{la10} in a standard way provided $r_0$ is adjusted further depending also on $C_2$.
\end{proof}
With Theorem \ref{unif} in hand, we can now repeat the arguments in \cite{IS} to conclude the H\"older decay of $u$ at a boundary point.  The H\"older regularity upto the boundary consequently follows by a standard real analysis argument by combining the boundary estimate with the interior estimate  in \cite{IS}. We close this section by stating such a result. 

\begin{thm}\label{holder1}
For any continuous function $u:\overline{B^{+}_{1}} \longrightarrow\R,$ such that
\begin{equation*}
\left\{
\begin{aligned}{}
\mathcal{P}^{-}(D^{2}u, Du)&\leq C_{0}~~\text{in}~B^{+}_{1},\\
\mathcal{P}^{+}(D^{2}u, Du)&\geq -C_{0}~~\text{in}~B^{+}_{1},\\
u_{x_n}&=g~~\text{on}~~B_1^0,\\
\|g\|_{L^{\infty}(B_1^0)}&\leq C_{0},
\end{aligned}
\right.
\end{equation*}
we have $u\in C^{\alpha}(\overline{B^{+}_{\frac{1}{2}}})$ for some $\alpha>0$ depending on $\lambda, \Lambda$ and the dimension.
\end{thm}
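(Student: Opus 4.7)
The plan is to derive boundary Hölder decay at every point of $B^0_1\cap B_{1/2}$ by an oscillation iteration powered by Theorem \ref{unif}, and then glue with the interior estimate Theorem \ref{interho} through the standard distance-to-boundary dichotomy.

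As a preliminary, a multiplication $u\mapsto\kappa u$ with $\kappa>0$ small (depending on $C_0$, $\gamma$, $\epsilon_1$, $C_1$, $\Lambda$) lets us assume $\mathcal{P}^-(D^2u,Du)\leq\epsilon_1/2$, $\mathcal{P}^+(D^2u,Du)\geq-\epsilon_1/2$, $\|g\|_{L^\infty(B_1^0)}\leq\epsilon_1/(C_1\Lambda)$, and that the effective gradient threshold of the rescaled Pucci operators is $\leq\tilde\epsilon_0$; all constants below depend implicitly on $\kappa$. Choose a Hölder exponent $\alpha\in(0,1/2)$ small enough that, for the constants $c_0=c_0(\alpha)$ and $r_0=r_0(\alpha)$ furnished by Theorem \ref{unif}, one has
\[2\|u\|_\infty r_0^{-\alpha}\leq\frac{c_0}{(4n)^\alpha-1},\]
which is the two-sided constraint that the base case and the iteration step impose on $A:=c_0/((4n)^\alpha-1)$; ensuring this is the main technical point in coordinating the parameters.

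Fix $x_0\in B_{1/2}\cap B^0_1$, shrink $r_0$ if necessary so that $B^+_{4nr_0}(x_0)\subset B^+_1$, and set $r_k:=r_0(4n)^{-k}$, $\omega_k:=\operatorname{osc}_{B^+_{r_k}(x_0)}u$. I claim $\omega_k\leq A r_k^\alpha$ for every $k\geq 0$, by induction. The case $k=0$ holds because $A r_0^\alpha\geq 2\|u\|_\infty\geq\omega_0$. For the inductive step let $m:=\inf_{B^+_{r_{k-1}}(x_0)}u$ and $M:=\sup_{B^+_{r_{k-1}}(x_0)}u$. If $M-m\leq 2r_k^\alpha$ we are done. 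Otherwise, at least one of the functions $v\in\{u-m,\,M-u\}$ satisfies
\[\bigl|\{v>(M-m)/2\}\cap Q^+_{r_k}(x_0)\bigr|\geq\tfrac12|Q^+_{r_k}|,\]
and since $(M-m)/2>r_k^\alpha$ this upgrades to $|\{v>r_k^\alpha\}\cap Q^+_{r_k}(x_0)|\geq\tfrac12|Q^+_{r_k}|$. The two-sided Pucci inequality, together with the identity $\mathcal{P}^-(-N,-p)=-\mathcal{P}^+(N,p)$, shows that both candidate $v$'s obey $v\geq 0$ and $\mathcal{P}^-(D^2v,Dv)\leq\epsilon_1/2$; the Neumann datum becomes $\pm g$, still with $L^\infty$-norm $\leq\epsilon_1/(C_1\Lambda)$. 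Applying Theorem \ref{unif} to $v$ on $B^+_{4nr_k}(x_0)\subset B^+_{r_{k-1}}(x_0)$ (after a translation moving $x_0$ to the origin, which preserves the half-space) yields $v>c_0 r_k^\alpha$ in $B^+_{r_k}(x_0)$, hence
\[\omega_k\leq\omega_{k-1}-c_0 r_k^\alpha\leq A(4n)^\alpha r_k^\alpha-c_0 r_k^\alpha=A r_k^\alpha,\]
the last equality being the choice of $A$.

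Monotonicity in the radius upgrades the dyadic bound to $\operatorname{osc}_{B^+_r(x_0)}u\leq C r^\alpha$ for every $r\in(0,r_0]$, with $C$ uniform in $x_0\in B^0_1\cap B_{1/2}$. To finish, given $x,y\in\overline{B^+_{1/2}}$ set $d:=\min(x_n,y_n)$ and consider two cases: if $|x-y|\geq d/2$, the triangle inequality through the feet of the perpendiculars from $x,y$ to $B^0_1$, combined with the boundary decay, produces $|u(x)-u(y)|\leq C|x-y|^\alpha$; if $|x-y|<d/2$, the ball $B_{d/2}(x)$ is contained in $B^+_1$, and the interior estimate Theorem \ref{interho} rescaled to this ball gives the same bound (possibly after decreasing $\alpha$ to match the interior Hölder exponent). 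The main obstacle, in my view, is precisely the coordination of $\gamma,\kappa,\alpha,r_0$ required to make both the base case $A r_0^\alpha\geq\omega_0$ and the iteration inequality $A(4n)^\alpha-c_0\leq A$ hold simultaneously, on top of all the smallness conditions of Theorem \ref{unif}.
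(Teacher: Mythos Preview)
Your proposal is correct and follows essentially the same approach the paper indicates: the paper does not give a detailed proof of Theorem \ref{holder1} but simply states that with Theorem \ref{unif} in hand one repeats the oscillation-decay argument of \cite{IS} at boundary points and then combines with the interior estimate of Theorem \ref{interho} via a standard real-analysis argument, which is exactly what you have written out. One minor remark on the parameter coordination you flag as the main obstacle: since the preliminary rescaling $u\mapsto\kappa u$ also shrinks $\|u\|_\infty$, it is slightly cleaner to fix the $\alpha, r_0, c_0$ produced by Theorem \ref{unif} once and for all and then choose $\kappa$ small enough to force the base case $2\|\kappa u\|_\infty\leq A r_0^\alpha$, rather than varying $\alpha$; but your route works as well since, from the proof of Theorem \ref{unif}, $c_0$ and $r_0$ can be taken uniform over $\alpha\in(0,1/2)$.
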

\section{Equicontinuous estimates upto the boundary  for equations which are uniformly elliptic when the gradient is small}\label{lga}
In this section we obtain equicontinuous estimates for equations of the type \eqref{it} for large slopes, i.e. when $|p|$ is large.
As we have already mentioned in the introduction, since an appropriate generalization     of the doubling variable argument of Ishii and Lions  to our Neumann problem  is not clear to us, therefore we instead adapt the method of  Savin as in \cite{sa} based on sliding paraboloids.  

 \medskip

Now    in order to see that the method of sliding paraboloids  can be applied in this situation ( which is tailor-made for equations which are uniformly elliptic when the gradient is small),  we note that \eqref{it} can  be rewritten as
 \[
 \bigg|\frac{Du}{|p|} + \frac{p}{|p|} \bigg|^\beta F(D^2u) = \frac{f}{|p|^{\beta}}.
 \]
Therefore, for large enough $|p|$,  getting equicontinuity estimates for \eqref{it} reduces to  getting such estimates  for equations of the following type

\begin{equation}\label{sm1}
\left\{
\begin{aligned}{}
|e+ \sigma Du|^{\beta} F(D^{2}u,x)&= f~\text{in}~B^{+}_{1},\\
u_{x_n}&=g~\text{on}~T_1,\\
\end{aligned}
\right.
\end{equation}
where $|e|=1$,  $0< \sigma \leq 1$  and $ F:S(n) \times \R^{n} \longrightarrow\R,$  is a uniformly elliptic operator, i.e.
\begin{equation}\label{uni}
\lambda \|Y\|\leq F (X+Y,x)-F(X,x)\leq \Lambda \|Y\|,
\end{equation}
for all $X, Y\in S(n)$ with $Y\geq0.$  Note that the equation in \eqref{sm1} has a uniformly elliptic structure when $|Du|$ is small ( say when $|Du| \leq \frac{1}{2 \sigma}$).

\medskip

In our discussion, we will however be considering slightly more general degenerate elliptic operators as in \cite{sa}. 
More precisely, we  consider fully nonlinear operators of the type $ \tilde F: S(n) \times \R^n \times \R^{n} \longrightarrow\R,$ which satisfies the following structural conditions
\begin{enumerate}
\item[SC1)]~~ $\tilde F$ is degenerate elliptic, that is,
\[ \tilde F(X+Y,q,x)\geq  \tilde F(X,q,x)~~\text{for~all}~X,Y\in S(n),~Y\geq0~\text{and}~~(q,x)\in\R^{n}\times\R^{n}.\]
\item[SC2)]~~$\tilde F(0,q,x)=0$~~for all $(q,x)\in\R^{n}\times\R^{n}.$
\item[SC3)]~~$\tilde F$ is uniformly elliptic in a small neighbourhood of $0,$ that is, there is a $\delta>0$ such that
\[\lambda\|Y\|\leq \tilde F(X+Y,q,x)-\tilde F(X,q,x)\leq\Lambda\|Y\|,\]
for some $0< \lambda< \Lambda$,  $q\in B_{\delta}$, $X,Y\in S(n)$ and $Y\geq0$ and $x\in\R^{n}.$
\end{enumerate}
Note that  it is clear that the operator $\tilde F(X,q, x)=|e+\sigma q|^{\beta}F(X,x)$ satisfies the structural conditions SC1), SC2) and SC3) with  ellipticity bounds $(\frac{1}{2})^{\beta}\lambda$ and $(\frac{3}{2})^{\beta}\Lambda$ for $\delta= \frac{1}{2\sigma}$. 
\medskip

Let  us now  consider the following problem:
\begin{equation}\label{m120}
\left\{
\begin{aligned}{}
\tilde F(D^{2}u,Du,x)&\leq f~\text{in}~B^{+}_{1},\\
u_{x_n}&\leq0~\text{on}~T_1,\\
\end{aligned}
\right.
\end{equation}
where $\tilde F$ satisfies SC1)-SC3). The following lemma is a boundary version of Lemma 2.3 in \cite{CF} which in turn is inspired by the ideas in the proof of  Lemma 2.1 in \cite{sa}.
\begin{lem}\label{smesti}
Let $u$ be a viscosity solution to  \eqref{m120}. Fix $a\in(0,\delta/2),$ let $B\subset B^{+}_{1}$ be a compact set, and define $A\subset \overline{{B^{+}_{1}}}$ to be the set of contact points of paraboloid with vertices in $B$ and opening $-a$, namely  the set of points $x\in \overline{{B^{+}_{1}}}$ such that there exists $y\in B$ which satisfies \begin{equation}
\inf_{\xi\in B^{+}_{1}}\big\{\frac{a}{2}|y-\xi|^{2}+u(\xi)\big\}=\frac{a}{2}|y-x|^{2}+u(x).
\end{equation}
Then there exists universal constant $c_{1}>0$ such that
\begin{equation}
c_{1}|B|\leq|A|+\int_{A} \frac{|f(x)|^n}{a^{n}} dx
\end{equation}
\end{lem}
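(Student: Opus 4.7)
The plan is to follow Savin's sliding-paraboloid strategy exactly as implemented in Colombo--Figalli's interior Lemma 2.3, and to show that the only new ingredient required for the boundary version is a short argument ruling out contact points on the flat portion $T_1$ of $\partial B_1^+$. Precisely, for each vertex $y \in B$, the infimum in the definition of $A$ is attained at some $x \in \overline{B_1^+}$ by lower semicontinuity. Suppose such a minimizer lies on $T_1$. Then the paraboloid
\[
\phi(\xi) \;=\; u(x) + \tfrac{a}{2}|y-x|^2 - \tfrac{a}{2}|y-\xi|^2
\]
touches $u$ from below at $x$, and a direct computation gives $\phi_{x_n}(x) = a(y_n - x_n) = a\, y_n > 0$ because $y \in B \subset B_1^+$. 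On the other hand, the viscosity interpretation of $u_{x_n} \leq 0$ (tested with functions touching $u$ from below) forces $\phi_{x_n}(x) \leq 0$, a contradiction. Hence no contact point lies on $T_1$, and since $\partial B_1 \cap \{x_n > 0\}$ has zero Lebesgue measure, we may restrict attention to contact points $x$ interior to $B_1^+$.

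With this exclusion in place, the proof reduces to Savin's interior argument. At every interior contact point $x$, the touching paraboloid gives $Du(x) = a(y-x)$ and the one-sided punctual bound $D^2 u(x) \geq -aI$; moreover $|y-x| < 2$ and $a < \delta/2$ imply $|Du(x)| < \delta$, which places us in the uniformly elliptic regime guaranteed by SC3). By the standard Alexandrov-type argument underlying Savin's method, at almost every such contact point the function $u$ is twice differentiable in the punctual sense, so the viscosity inequality can be applied pointwise, and using SC2) together with SC3) yields
\[
\lambda\,\|(D^2 u(x))^+\| - \Lambda\,\|(D^2 u(x))^-\| \;\leq\; \tilde F(D^2 u(x), Du(x), x) \;\leq\; f(x).
\]
Combining this with $D^2 u(x) \geq -aI$ produces the two-sided bound $-aI \leq D^2 u(x) \leq \tfrac{f(x)^+ + \Lambda a}{\lambda}\,I$.

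Define now the gradient map $\Phi(x) := x + \tfrac{1}{a}\,Du(x)$ on the set of interior punctual contact points; by construction $\Phi(x) = y$ whenever $y$ is the vertex whose paraboloid contacts $u$ at $x$, so $B \subset \Phi(A)$ modulo a null set. The Jacobian $D\Phi(x) = I + \tfrac{1}{a}\,D^2 u(x)$ lies between $0$ and $\bigl(1 + \tfrac{f(x)^+ + \Lambda a}{\lambda a}\bigr)I$ by the previous step, so $|\det D\Phi(x)| \leq C\bigl(1 + |f(x)|/a\bigr)^n \leq C + C\,|f(x)|^n/a^n$. The area formula on the punctual second-differentiability set of $A$ then yields
\[
|B| \;\leq\; \int_A |\det D\Phi(x)|\,dx \;\leq\; C|A| + C\int_A \frac{|f(x)|^n}{a^n}\,dx,
\]
which is the claimed inequality after renaming constants. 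The main obstacle is therefore the Neumann exclusion of the first step; the remaining ingredients (punctual twice differentiability at almost every contact point and validity of the area formula for the gradient map) are by now standard and transfer verbatim from the interior setting of \cite{CF,sa}.
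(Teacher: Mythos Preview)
Your proposal is correct and follows essentially the same approach as the paper's proof. Both arguments first rule out contact points on the flat boundary $B_1^0$ via the Neumann condition (your computation $\phi_{x_n}(x)=ay_n>0$ matches the paper's exactly), and then reduce to the interior sliding-paraboloid argument of Colombo--Figalli and Savin; the paper simply cites \cite{CF,sa} for this part while you spell out the gradient map and Jacobian bound explicitly, but the strategy is identical.
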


\begin{proof}
Since $B$ is compact subset of $B^{+}_{1},$ therefore for any $y\in B,$ $y_{n}>0.$ Therefore the contact point $x\not\in  B_1^0.$ For if $x\in A\cap B_1^0,$ then the paraboloid
\[P^{y}(\xi)=u(x)+\frac{a}{2}|x-y|^{2}-\frac{a}{2}|\xi-y|^{2}\]
touches $u$ at $x\in B_1^0$ from below and also $(P^{y})_{\xi_n}(x)=ay_{n}>0,$ which contradicts   the Neumann condition  in the viscosity formulation as in \eqref{m120} above. At this point, we can essentially repeat the arguments as in   Lemma 2.3 in  \cite{CF}. Note that although Lemma 2.3 in \cite{CF} deals with $C^{2}$ solutions, but nevertheless the proof  can be  generalized to semiconcave solutions using Alexandrov's theorem and then to arbitrary viscosity solutions using inf convolution. See for instance the proof of Lemma 2.1 in \cite{sa}. 

\end{proof}


Before stating our next result, we first introduce the following notation.
\begin{equation}
A_{a}=\Big\{x\in B^{+}_{1}~~\Big|~~u(x)\leq a,~\text{ and~there~exists}~y\in\overline{B^{+}_{1}}~\text{such~that}~\inf_{z\in B^{+}_{1}}\big[u(z)+\frac{a}{2}|y-z|^{2}\big]=u(x)+\frac{a}{2}|y-x|^{2}\Big\}.
\end{equation}
Namely, $A_a$ is the set of all $x \in B_1^{+}$ such that $u(x) \leq a$ and the function $u$ can be touched from below at $x$  with a  paraboloid of opening $-a$ with vertex in $\overline{B_1^+}$.
The next result is the boundary version of the Lemma 2.4 in \cite{CF} . See also the corresponding  Lemma 2.2  in \cite{sa}.
\begin{lem}\label{smb}
Let $u$ be as in \eqref{m120}. Also let $a>0$ and $x_{0}\in B_1^0$ such that $B^{+}_{4r}(x_{0})\subset B^{+}_{1}.$ Then there exist universal constants $C_{b}$ and $c_{b}$ and $\mu_{b},$ such that if $a\leq\frac{\delta}{C_{b}},$ $\|f\|_{L^{\infty}(B^{+}_{1})}\leq \mu_{b}a$ and
\begin{equation}\label{sm6'}
B^{+}_{r}(x_{0})\cap A_{a}\not=\emptyset,
\end{equation}
then
\begin{equation}\label{sm32}
c_{b}|B_{r}^+(x_{0})|\leq|B_{\frac{r}{16}}^+(x_{0}) \cap A_{a C_{b}}|.
\end{equation}
\end{lem}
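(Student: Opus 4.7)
My plan is to mirror the sliding-paraboloids strategy of Savin \cite{sa} and its quantitative refinement by Colombo--Figalli \cite{CF}, while using the Neumann argument already established in the proof of Lemma \ref{smesti} to rule out contact at the flat face $T_1$. The idea is to slide a family of steeper paraboloids of opening $-aC_b$ (with $C_b$ much larger than $1$) from vertices ranging over a well-chosen set $V \subset \overline{B_1^+}$ of measure comparable to $|B_r^+(x_0)|$, and then to apply Lemma \ref{smesti} to the contact map to transfer the measure of $V$ to the contact set, up to an error coming from $f$ that is rendered negligible by choosing $\mu_b$ small relative to $C_b$.

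The first step is to extract from the nonempty set $B_r^+(x_0) \cap A_a$ a contact point $\bar x$ with vertex $\bar y \in \overline{B_1^+}$, so that $u(\bar x) \leq a$ and $u(\xi) \geq u(\bar x) + \frac{a}{2}(|\bar x - \bar y|^2 - |\xi - \bar y|^2)$ on $B_1^+$. This gives both a pointwise lower bound $u(\xi) \geq u(\bar x) - 2a$ on $\overline{B_1^+}$ and the crucial upper bound $g_y(\bar x) \leq a + \frac{aC_b}{2}|\bar x - y|^2$ for the competitor
\[
g_y(\xi) = u(\xi) + \frac{aC_b}{2}|\xi - y|^2,
\]
whose minimum over $\overline{B_1^+}$ is attained at some point $x(y)$ (using lower semicontinuity and compactness).

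The heart of the argument is the claim that $x(y) \in B_{r/16}^+(x_0)$ for every $y$ in the chosen set $V$. For the flat face, I would repeat verbatim the Neumann obstruction used in the proof of Lemma \ref{smesti}: if $x(y) \in T_1$ and $y_n > 0$, the paraboloid $P^y(\xi) = g_y(x(y)) - \frac{aC_b}{2}|\xi - y|^2$ touches $u$ from below at $x(y)$ with $(P^y)_{\xi_n}(x(y)) = aC_b\,y_n > 0$, contradicting the viscosity formulation of $u_{x_n} \leq 0$. For the curved part $\partial B_{r/16}^+(x_0) \cap B_1^+$, I would compare the upper bound on $g_y(\bar x)$ with the lower bound $g_y(\xi) \geq u(\bar x) - 2a + \frac{aC_b}{2}|\xi - y|^2$ valid off that ball, and choose $V$ together with $C_b$ so that the latter is strictly larger and the minimum is forced into the interior of $B_{r/16}^+(x_0)$. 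The condition $a \leq \delta/C_b$ is what guarantees that all sliding paraboloids used in the process keep $|D g_y|$ inside the ellipticity window of $\tilde F$ imposed by SC3). Combining this localization with $y \in \overline{B_1^+}$ and the bound $u(x(y)) \leq g_y(\bar x) \leq aC_b$ (valid for $r$ small, which is automatic from $B_{4r}^+(x_0) \subset B_1^+$) shows that $x(y) \in A_{aC_b} \cap B_{r/16}^+(x_0)$.

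Finally, applying Lemma \ref{smesti} to the map $V \ni y \mapsto x(y)$ with opening $aC_b$ yields
\[
c_1 |V| \leq |A_{aC_b} \cap B_{r/16}^+(x_0)| + \frac{1}{(aC_b)^n}\int_{A_{aC_b} \cap B_{r/16}^+(x_0)} |f(x)|^n\,dx,
\]
and the bound $\|f\|_{L^\infty} \leq \mu_b a$ makes the last term at most $(\mu_b/C_b)^n\,|A_{aC_b} \cap B_{r/16}^+(x_0)|$, which is absorbed into the left-hand side for $\mu_b$ small. Since by construction $|V| \gtrsim |B_r^+(x_0)|$, this delivers \eqref{sm32} with universal constants $C_b$, $c_b$, and $\mu_b$. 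The main obstacle I anticipate is the calibration in the previous paragraph: choosing $V$ and $C_b$ so that contact points cannot escape through the curved portion of $\partial B_{r/16}^+(x_0)$, independently of $a$ and $r$. This is where the boundary geometry and the lower paraboloid at $\bar x$ must be balanced against the steepness of the sliding paraboloids, and it is the natural half-space analogue of the cone-of-influence estimate from \cite{sa,CF}.
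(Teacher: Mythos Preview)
Your proposal has a genuine gap: the direct comparison you set up cannot force the contact points $x(y)$ into the small ball $B_{r/16}^+(x_0)$ with universal constants. The only global lower bound you have is $u(\xi)\ge u(\bar x)-2a$, so your inequality on the curved part reads
\[
\frac{aC_b}{2}\bigl(|\xi-y|^2-|\bar x-y|^2\bigr)>2a,
\qquad\text{i.e.}\qquad |\xi-y|^2-|\bar x-y|^2>\frac{4}{C_b}.
\]
But $\bar x$ is only known to lie in $B_r^+(x_0)$, not in $B_{r/16}^+(x_0)$. If you place $V$ near $x_0$ (so that contact could land in $B_{r/16}^+(x_0)$), then $|\bar x-y|$ can be of order $r$ while $|\xi-y|$ can be arbitrarily small for $\xi\in\partial B_{r/16}(x_0)$, and the left side is negative. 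If instead you place $V$ near $\bar x$, the contact points localize near $\bar x$, not near $x_0$, and in any case the needed inequality $|\xi-y|^2-|\bar x-y|^2>4/C_b$ forces $C_b\gtrsim r^{-2}$, which is not universal. In short, the crude bound $u\ge u(\bar x)-2a$ carries no $r$-scaling and cannot compete with the quadratic term.

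What is missing is the intermediate \emph{barrier step} from Savin/Colombo--Figalli, which the paper carries out first: one constructs a Pucci-type subsolution $\psi(x)=Q_{y_1}(x)+ar^2\phi\bigl(\tfrac{x-x_0}{r}\bigr)-\epsilon ar^2(r-x_n)$ (the last term is the Neumann correction) and uses the equation together with $u_{x_n}\le 0$ to show that $\min_{\overline{B_r^+(x_0)}}(u-\psi)$ is attained at some $z\in B_{r/32}^+(x_0)$, yielding $u(z)\le Q_{y_1}(z)+C_1ar^2$. The crucial gain is the factor $r^2$: it is this $O(ar^2)$ closeness at a point already inside $B_{r/32}^+(x_0)$ that allows the subsequent steep paraboloids $P_y=Q_{y_1}-\tfrac{La}{2}|x-y|^2$ (with $y\in B_{r/128}(z)\cap\{y_n>z_n\}$ and $L$ universal) to localize their contact in $B_{r/32}^+(z)\subset B_{r/16}^+(x_0)$. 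Only after this localization does one invoke Lemma~\ref{smesti} as you describe. Your final measure/absorption argument is correct once the localization is in place, but without the barrier step the scheme does not close.
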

\begin{proof}
By \eqref{sm6'}, there exists $x_{1}\in B^{+}_{r}(x_{0})\cap A_{a}$.  So by  the definition of $A_{a}$,  there  exists  $y_{1}\in B^{+}_{1}$ such that the paraboloid
\begin{equation}
Q_{y_{1}}(\xi)=u(x_{1})+\frac{a}{2}|x_{1}-y_{1}|^{2}-\frac{a}{2}|\xi-y_{1}|^{2},
\end{equation}
satisfies
\begin{equation}\label{m12}
\left\{
\begin{aligned}{}
Q_{y_{1}}(\xi)&\leq u(\xi)~~\forall~\xi\in B^{+}_{1},\\
Q_{y_{1}}(x_{1})&=u(x_{1}).
\end{aligned}
\right.
\end{equation}
We now make the following claim.

\medskip

\textbf{Claim:} There exists $z\in B^{+}_{\frac{r}{32}}(x_{0})$ such that
\begin{equation}\label{sm13}
u(z)\leq Q_{y_{1}}(z)+C_{1}ar^{2}.
\end{equation}
In order to prove the claim,  let us consider the function $\phi:\R^{n}\longrightarrow\R,$ defined by
\begin{equation}\label{smphi}
    \phi(x)=
\begin{cases}
    \frac{1}{\alpha}(32^{\alpha}-1), & \text{if}~|x|<\frac{1}{32}\\
    \frac{1}{\alpha}(|x|^{-\alpha}-1),& \text{if }~\frac{1}{32}\leq|x|\leq1\\
    0, &\text{if}~|x|>1
\end{cases}
\end{equation}
where $\alpha$ is  to be chosen later. In terms of $\phi$, we then define $\psi:B^{+}_{r}(x_{0})\longrightarrow\R$  in the following way, 
\begin{equation}
\psi(x)=Q_{y_{1}}(x)+ar^{2}\phi\big(\frac{x-x_{0}}{r}\big)-\epsilon ar^{2}(r-x_{n}),
\end{equation}
where $\epsilon$ is a sufficiently small number which will be  chosen below. We note that for $x$ satisfying $\frac{r}{32}<|x-x_{0}|<r$, the function $\psi$  is smooth. Moreover, for any $x$ in the above set we have
\begin{equation}
D\psi(x)=-a(x-y_{1})+arD\phi\big(\frac{x-x_{0}}{r}\big)+\epsilon ar^{2}e_n.
\end{equation}
Thus it follows that
\begin{equation}\label{m12}
\begin{aligned}{}
|D\psi(x)|&\leq 4a+a\frac{r^{1+\alpha}}{|x-x_{0}|^{\alpha}}\\
&\leq a(4+32^{1+\alpha})<\delta,
\end{aligned}
\end{equation}
provided $C_{b}\geq(4+32^{1+\alpha})$ and  consequently $F$ is uniformly elliptic in the above region. In view of $SC3)$ we have
\begin{equation}\label{sm9}
\begin{aligned}{}
\tilde F(D^2\psi(x),D\psi(x),x)-f(x)&\geq\lambda||(D^{2}\psi(x))^{+}||-\Lambda\|(D^{2}\psi(x))^{-}\|-\|f\|_{L^{\infty}(B^{+}_{1})}\\
&\geq a\Big[\big(\lambda(1+\alpha)-\sqrt{n-1}\Lambda\big)\frac{r^{\alpha+2}}{|x-x_{0}|^{\alpha}}-\lambda-\sqrt{n-1}\Lambda-\mu_{b}\Big].
\end{aligned}
\end{equation}
Consequently,  if we choose $\alpha$ sufficiently large, then we obtain
\begin{equation}
\hspace{2cm}~\tilde F(D^2\psi(x),D\psi(x),x)-f(x)>0~~~\text{in}~~B^{+}_{r}(x_{0})\cap\{x_{n}>0\}\cap\{\frac{r}{32}<|x-x_{0}|<r\}.
\end{equation}
Also for  $\bar{x}\in B_1^0$, we observe that 
\begin{equation}\label{sm10}
\partial_{x_n}\psi(\bar{x})=a(y_{1})_{n}+a\epsilon r^{2}>0.
\end{equation}
We denote by $z$ the point  where $\min_{x\in\overline{B^{+}_{r}}(x_{0})}(u-\psi)$ is achieved. We now   choose $\epsilon>0$ sufficiently small such that
\begin{equation}\label{smol}
-ar^{2}\phi\big(\frac{x_{1}-x_{0}}{r}\big)+ar^{2}(r-(x_{1})_{n})\epsilon<0.
\end{equation}
Note that although the choice of $\ve$ depends on $x_1$ but as we will see, it doesn't affect   the final conclusion. 
\eqref{smol} implies
\begin{equation}\label{sm11}
u(x_{1})-\psi(x_{1})=Q_{y_{1}}(x_{1})-\psi(x_{1})=-ar^{2}\phi\big(\frac{x_{1}-x_{0}}{r}\big)+ar^{2}(r-(x_{1})_{n})\epsilon<0.
\end{equation}
Moreover  on $\partial B_{r}(x_{0})\cap\{x_{n}>0\},$  we have
\begin{equation*}\label{sm12}
\begin{aligned}{}
u(x)&\geq Q_{y_{1}}(x)\\
&\geq Q_{y_{1}}(x)-\epsilon ar^{2}(r-x_{n})~~~~~~\hspace{0.5cm}\big(\text{since}~~(ar^{2}(r-x_{n})\geq0)\big)\\
&=\psi(x).
\end{aligned}
\end{equation*}
Now we note that since  $u_{x_n}\leq0$~on $B_1^0$~(in the viscosity sense), so in view of \eqref{sm10},  we can deduce that $u-\psi$ cannot attain minimum on $\big\{ \frac{r}{32}<|x-x_{0}|<r \big\}\cap\big\{x_{n}=0\big\} \cup \partial B_r (x_0) \cap \{x_n >0\}.$ Therefore there exists $z\in B^{+}_{\frac{r}{32}}(x_{0})$ such that
\begin{equation*}\label{sm12}
\begin{aligned}{}
u(z)&< \psi(z),\hspace{0.5cm}~~~~\big(\text{thanks to}~\eqref{sm11}\big)\\
&\leq Q_{y_{1}}(z)+ar^{2}\phi\big(\frac{z-x_{0}}{r}\big)-\epsilon ar^{2}(r-z_{n})\\
&\leq Q_{y_{1}}(z)+ar^{2}\phi\big(\frac{z-x_{0}}{r}\big)~\hspace{0.5cm}~~\big(\text{since}~~(ar^{2}(r-z_{n})\geq0)\big)\\
&\leq Q_{y_{1}}(z)+C_{1}ar^{2}.
\end{aligned}
\end{equation*}
For a given $L >0$ and  $y\in B_{\frac{r}{128}}(z)\cap\{y_{n}>z_{n} \}$, we  consider the paraboloid
\begin{equation}\label{py}
P_{y}(x)=Q_{y_{1}}(x)-L\frac{a}{2}|x-y|^{2}.
\end{equation}
It is easy to check that for each $y$, $P_{y}$ is a paraboloid with opening $-(L+1)a$ and vertex $\frac{y_{1}+Ly}{1+L}.$ We slide it from below till it touches the graph of $u$ for the first time. We claim that the contact point $\bar{x}\in B^+_{\frac{r}{32}}(z)$ provided $L$ is large enough. In order to prove such a claim, we make the following observations.

\medskip

(i)~ Suppose $\bar{x}\in B_1^0,$ then
\begin{equation}
\partial_{x_n}(P_{y})(\bar{x})=a(y_{1})_{n}+La y_{n}>La(z_{n})>0.
\end{equation}
Now since $\partial_{x_n}u\leq0$ on $B_1^0$ (in the viscosity sense), therefore  $P_{y}$ cannot touch $u$ from below at points in $ B_1^0.$\\
(ii)~Suppose instead   $\bar{x}$ satisfies  $|\bar{x}-z|\geq\frac{r}{32},$ then   using
 $u\geq Q_{y_{1}}$ on $B^{+}_{1},$ we find that the following holds,
\begin{equation}\label{sm14}
u(\bar{x})-Q_{y_{1}}(\bar{x})+\frac{La}{2}|\bar{x}-y|^{2}\geq \frac{La}{2}\big(\frac{r}{32}\big)^{2}.
\end{equation}
On the other hand since 
\begin{equation}\label{sm15}
\begin{aligned}{}
&\min_{\overline{B^{+}_{1}}}\big\{u(x)-Q_{y_{1}}(x)+\frac{La}{2}|x-y|^{2}\big\}\\
&\leq u(z)-P_{y_{1}}(z)+\frac{La}{2}|y-z|^{2}\\
&\leq C_{1}ar^{2}+\frac{La}{2}\big(\frac{r}{128}\big)^{2},
\end{aligned}
\end{equation}
thus by choosing $L$ large enough and by taking  into account\eqref{sm14} and \eqref{sm15}, we find that the contact point $\bar{x}\in B_{\frac{r}{32}}^+(z)\subset B^{+}_{\frac{r}{16}}(x_{0}).$\\
We now show that at the contact point $\bar{x},$  we have $u(\bar{x})\leq La$ provided $L$ is further adjusted.  Indeed, since  \[Q_{y_{1}}(x_{1})=u(x_{1})\leq a\] and also
\begin{equation*}\label{sm16}
\begin{aligned}{}
Q_{y_{1}}(\bar{x})&=u(x_{1})+\frac{a}{2}|x_{1}-y_{1}|^{2}-\frac{a}{2}|\bar{x}-y_{1}|^{2}\\
&\leq a+4a=5a,
\end{aligned}
\end{equation*}
hence from \eqref{sm15} (since $\bar{x}$ is the point where the minimum in \eqref{sm15} is achieved), we find
\begin{equation*}\label{sm17}
\begin{aligned}{}
u(\bar{x})&\leq Q_{y_{1}}(\bar{x})-L\frac{a}{2}|\bar{x}-y|^{2}+C_{1}ar^{2}+\frac{La}{2}(\frac{r}{128})^{2}\\
&\leq 5a+Car^{2}+\frac{La}{2}(\frac{r}{128})^{2}\leq La,
\end{aligned}
\end{equation*}
provided $L$ is sufficiently large. Now  as $y$ varies in $B_{\frac{r}{128}}(z)\cap\{y_{n}\geq z_{n}\},$ the set of vertices of the paraboloids as in \eqref{py} falls in the  region \begin{equation}
\tilde{R}\stackrel{\text{def}}{=}\Big[B\Big(\frac{y_{1}+Lz}{1+L},~~\frac{Lr}{128(1+L)}\Big)\bigcap\Big\{\xi_{n}\geq\frac{(y_{1})_{n}+Lz_{n}}{1+L}\Big\}\Big],
\end{equation}
therefore by applying Lemma \ref{smesti}, we get
\begin{equation}\label{sm30}
\begin{aligned}{}
c_{0}|\tilde{R}|&\leq|B^{+}_{\frac{r}{16}(x_{0})}\cap A_{a(L+1)}|+\Big(\frac{\|f\|^{n}_{L^{\infty}(B^{+}_{1})}}{a^{n}}\Big)|B^{+}_{\frac{r}{16}(x_{0})}|\\
&\leq |B^{+}_{\frac{r}{16}(x_{0})}\cap A_{a(L+1)}|+\big(\frac{\mu_{b}}{16}\big)^{n}|B^{+}_{r}(x_{0})|.
\end{aligned}
\end{equation}
Then we observe that 
\begin{equation}\label{sm31}
|\tilde{R}|=C|B^{+}_{r}(x_{0})|
\end{equation}
for some constant $C$ independent of $r.$  From \eqref{sm30} and \eqref{sm31} we finally  obtain
\[\Big[c_{0}C-\big(\frac{\mu_{b}}{16}\big)^{n}\Big]|B^{+}_{r}(x_{0})|\leq |B^{+}_{\frac{r}{16}(x_{0})}\cap A_{aL}|.\]
and thus the  conclusion of the lemma follows.
\end{proof}
We note that the  interior analogue of the lemma above is  crucially needed   to  apply the measure decay estimate in \cite{CF} and \cite{sa}  which is the key ingredient  needed to obtain  quantitative oscillation decay estimates.  In our situation,  in order to combine  the boundary and interior  estimate,  we also  need the  following  additional lemma.
\begin{lem}\label{smib}
Let $a>0,$ and suppose that $B_{4r}^+(x_{0})\subset B^{+}_{1}$ and $(x_{0})_{n}\geq\frac{r}{16}.$ Suppose that $u$ is a viscosity solution of \eqref{m120}. Then there exists  universal constants $C_{ib},$ $c_{ib}$ and $\mu_{ib}>0$ such that if
\begin{equation*}\label{sm18}
\left\{
\begin{aligned}{}
&\|f\|_{L^{\infty}(B^{+}_{1})}\leq a\mu_{ib},\\
&a\leq\frac{\delta}{C_{ib}}~~\text{and}\\
&B_{r}^+(x_{0})\cap A_{a}\not=\emptyset,
\end{aligned}
\right.
\end{equation*}
then
\begin{equation}
|B_{\frac{r}{16}}^+(x_{0})\cap A_{a C_{ib}}|\geq c_{ib}|B_{r}^+(x_{0})|.
\end{equation}
 \end{lem}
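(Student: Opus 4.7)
The plan is to follow the same two-stage sliding-paraboloid scheme used in Lemma \ref{smb}, with the key simplification that the separation condition $(x_0)_n \geq r/16$ places $B_{r/16}(x_0)$ strictly inside $\{x_n > 0\}$, so that the second, paraboloid-sliding stage becomes essentially interior and the argument reduces to the interior Colombo--Figalli estimate.

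First I would pick $x_1 \in B_r^+(x_0) \cap A_a$ with its vertex $y_1 \in \overline{B_1^+}$, and let $Q_{y_1}$ be the corresponding supporting paraboloid with opening $-a$. Since $B_r(x_0)$ may still dip below $\{x_n = 0\}$ when $(x_0)_n < r$, I would keep the same barrier as in Lemma \ref{smb}, namely
\begin{equation*}
\psi(x) = Q_{y_1}(x) + a r^2 \phi\big((x - x_0)/r\big) - \epsilon a r^2 (r - x_n),
\end{equation*}
where $\phi$ is the cutoff \eqref{smphi}. The Neumann correction ensures $\partial_{x_n}\psi = a(y_1)_n + \epsilon a r^2 > 0$ on $B_1^0 \cap B_r(x_0)$, and the verifications that $\tilde F(D^2 \psi, D\psi, x) > f$ on the annulus $\{r/32 < |x - x_0| < r\}$ and that $\psi \leq u$ on $\partial B_r(x_0) \cap \{x_n > 0\}$ go through verbatim from the proof of Lemma \ref{smb}. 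Combined with $u_{x_n} \leq 0$ on $B_1^0$ in the viscosity sense, this rules out minimizers of $u - \psi$ on either $B_1^0 \cap B_r(x_0)$ or $\partial B_r(x_0) \cap \{x_n > 0\}$, yielding a point $z \in B_{r/32}(x_0)$ with $u(z) \leq Q_{y_1}(z) + C_1 a r^2$.

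Now the hypothesis $(x_0)_n \geq r/16$ pays off: one has $z_n \geq r/16 - r/32 = r/32 > 0$, so the entire ball $B_{r/128}(z)$ lies in $\{x_n > 0\}$. I would then slide the paraboloids $P_y(x) = Q_{y_1}(x) - (La/2)|x - y|^2$ from below, now letting $y$ range over the \emph{full} ball $B_{r/128}(z)$ (no half-ball restriction on the vertices is needed since $z$ is interior). Just as in Lemma \ref{smb}, for $L$ large enough every contact point $\bar x$ lies in $B_{r/32}(z) \subset B_{r/16}(x_0)$ and satisfies $u(\bar x) \leq L a$, so $\bar x \in A_{a(L+1)}$. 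The set of vertices of this family is the full ball $\tilde R = B\big((y_1 + L z)/(1+L),\, L r/(128(1+L))\big)$, which is contained in $B_1^+$ (because $L z_n/(1+L) > L r/(128(1+L))$) and has measure comparable to $|B_r^+(x_0)|$. Applying Lemma \ref{smesti} with $B = \tilde R$ gives
\begin{equation*}
c_0 |\tilde R| \leq \big|B_{r/16}(x_0) \cap A_{a(L+1)}\big| + \frac{\|f\|_{L^\infty(B_1^+)}^n}{a^n}\,|B_{r/16}(x_0)|,
\end{equation*}
and choosing $\mu_{ib}$ small enough to absorb the second term into the left-hand side yields the claim with $C_{ib} = L + 1$ and $c_{ib}$ universal.

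The only point requiring care is the first-stage barrier when $B_r(x_0)$ actually touches $\{x_n = 0\}$: the Neumann correction $-\epsilon a r^2 (r - x_n)$ cannot be dropped, but beyond that the computation is identical to that of Lemma \ref{smb}. Once $z$ is located in the strict interior at distance $\geq r/32$ from $\{x_n = 0\}$, the second stage is purely interior and no further boundary considerations arise.
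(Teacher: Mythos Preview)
Your scheme is the same two-stage sliding argument the paper uses, and the second (interior) stage is fine. The one point that needs correction is your handling of the first-stage barrier: the paper actually \emph{drops} the Neumann correction $-\epsilon a r^2(r-x_n)$ and works with the simpler $\Psi(x)=Q_{y_1}(x)+ar^2\phi\big((x-x_0)/r\big)$. The reason is precisely the hypothesis $(x_0)_n\ge r/16$: at any $\bar x\in\{x_n=0\}\cap\{r/32<|x-x_0|<r\}$ the radial bump already contributes
\[
\partial_{x_n}\Big[ar^2\phi\big(\tfrac{x-x_0}{r}\big)\Big]\Big|_{\bar x}
= a\Big(\tfrac{r}{|\bar x-x_0|}\Big)^{\alpha+2}\tfrac{(x_0)_n}{r}>0,
\]
so $\partial_{x_n}\Psi(\bar x)>0$ with no extra term needed. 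Your formula $\partial_{x_n}\psi=a(y_1)_n+\epsilon ar^2$ was imported from Lemma~\ref{smb}, where $(x_0)_n=0$ kills this $\phi$-contribution; here it does not, and that is exactly what the paper exploits. Keeping the correction as you do is harmless for the Neumann check, but it breaks your ``verbatim'' boundary comparison: on $\partial B_r(x_0)\cap\{x_n>r\}$ (nonempty since $(x_0)_n>0$) the term $-\epsilon ar^2(r-x_n)$ is positive, so one cannot directly conclude $\psi\le Q_{y_1}\le u$ there as in Lemma~\ref{smb}. This is patchable by taking $\epsilon$ small enough, but the cleaner fix---and the paper's---is to drop the correction altogether.
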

 
 \begin{proof}
The proof of this Lemma  is similar to that of  Lemma \ref{smb}. We nevertheless  give a sketch of it  for the sake of completeness.
 
By our assumption, there exists $x_{1}\in(B_{r}(x_{0})\cap \R^{n}_{+})\cap A_{a}.$ So from the definition of $A_{a},$ for some  $y_{1}\in B^{+}_{1}$, we have that the paraboloid
\begin{equation}
Q_{y_{1}}(\xi)=u(x_{1})+\frac{a}{2}|y_{1}-x_{1}|^{2}-\frac{a}{2}|y_{1}-\xi|^{2},
\end{equation}
satisfies
\begin{equation*}\label{sm19}
\left\{
\begin{aligned}{}
u(\xi)&\geq Q_{y_{1}}(\xi)~~\forall~~~\xi\in B^{+}_{1},\\
u(x_{1})&=Q_{y_{1}}(x_{1}).
\end{aligned}
\right.
\end{equation*}
We now claim that there exists  $z\in B_{\frac{r}{16}}(x_{0})\subset B^{+}_{1}$ (since $(x_{0})_{n}\geq\frac{r}{16}$) such that
\begin{equation}
u(z)\leq Q_{y_{1}}+C_{2}ar^{2}.
\end{equation}
for some universal $C_2$. In order to prove the claim, we consider the following  function $\Psi:\overline{B_{r}(x_{0})}\longrightarrow\R,$ defined by
\begin{equation}
\Psi(x)=Q_{y_{1}}(x)+ar^{2}\phi\big(\frac{x-x_{0}}{r}\big),
\end{equation}
with  $\phi$ as in \eqref{smphi}. Again we can choose  $\alpha$ large enough so that  the following  differential inequality is ensured
\begin{equation}\label{smo}
\left\{
\begin{aligned}{}
F(D^2\Psi,D\Psi,x)&> f(x)~~\text{in}~~\{x~|~\frac{r}{32}<|x-x_{0}|<r\}\cap\{x_{n}>0\},\\
\Psi_{x_n}&>0~~\text{on}~~\{x~|~\frac{r}{32}<|x-x_{0}|<r\}\cap\{x_{n}=0\}.
\end{aligned}
\right.
\end{equation}
We only check the second condition since the first one is as in the previous lemma.  Suppose that $\frac{r}{32}<|\bar{x}-x_{0}|<r$ and  also that $\bar{x}_{n}=0.$ Then we have that
\begin{equation}\label{sm21}
\partial_{x_n}\Psi(\bar{x})=a(y_{1})_{n}+\Big(\frac{r}{|\bar{x}-x_{0}|}\Big)^{\alpha+2}\frac{(x_{0})_{n}}{r}>0.
\end{equation}
At this point, by arguing as in the proof of the previous lemma, we  conclude that the point of minimum in

\begin{equation}\label{sm23}
\min_{\overline{B_{r}(x_{0})}\cap\{x~|~x_{n}\geq0\}}\{u-\Psi\}
\end{equation}
is realized in $\overline{B_{r/32}(x_0)}$. The rest of the arguments can then be repeated and the conclusion of the lemma follows similarly. \end{proof}
Finally, we state the interior version of the above measure estimate. ( see Lemma 2.4 in \cite{CF}).
\begin{lem}\label{smi}
Let $u$ be a solution to the second order differential inequality  in \eqref{m120}. Let $a>0,$ and $B_{4r}(x_{0})\subset B^{+}_{1}.$ Then there exist universal constants $C_{i}$ and $c_{i}$ and $\mu_{i},$ such that if $a\leq\frac{\delta}{C_{i}},$ $\|f\|_{L^{\infty}(B^{+}_{1})}\leq \mu_{i}a$ and
\begin{equation}\label{sm6}
B_{r}(x_{0})\cap A_{a}\not=\emptyset,
\end{equation}
then
\begin{equation}\label{sm32}
c_{i}|B_{r}(x_{0})|\leq|B_{\frac{r}{16}}(x_{0})\cap A_{C_{i}a}|.
\end{equation}
\end{lem}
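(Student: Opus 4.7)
The plan is to follow the template of Lemmas \ref{smb} and \ref{smib}, but with significant simplifications since the interior hypothesis $B_{4r}(x_0)\subset B_1^+$ means the ball lies strictly away from $\{x_n=0\}$ and no Neumann boundary condition intervenes. In particular, the corrective $\epsilon ar^2(r-x_n)$ term used in the boundary barriers, as well as all of the normal-derivative checks required in \eqref{sm10} and \eqref{sm21}, can simply be dropped; the sign of $\phi$ alone is enough to push the contact point into the small ball.

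Starting from $x_1\in B_r(x_0)\cap A_a$ and its vertex $y_1$, the paraboloid
\[
Q_{y_1}(\xi)=u(x_1)+\frac{a}{2}|x_1-y_1|^2-\frac{a}{2}|\xi-y_1|^2
\]
touches $u$ from below at $x_1$, and I would first produce $z\in B_{r/32}(x_0)$ with $u(z)\le Q_{y_1}(z)+C_1ar^2$ by using the barrier $\psi(x)=Q_{y_1}(x)+ar^2\phi\left(\tfrac{x-x_0}{r}\right)$ with $\phi$ as in \eqref{smphi}. Choosing $\alpha$ universal and large, and $C_i$ large enough so that $|D\psi|\le C_b a<\delta$ on the annulus $r/32<|x-x_0|<r$, places us in the uniformly elliptic regime SC3), and the eigenvalue computation of \eqref{sm9} gives $\tilde F(D^2\psi,D\psi,x)>f$ there provided $\mu_i$ is small. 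Since $\phi>0$ on $\{|x-x_0|<r\}$ and $\phi=0$ on $\partial B_r(x_0)$, one has $\psi\le Q_{y_1}\le u$ on $\partial B_r(x_0)$ while $\psi(x_1)>u(x_1)$; the minimum of $u-\psi$ over $\overline{B_r(x_0)}$ is therefore strictly negative and, by the supersolution comparison, cannot lie in the annulus, so it is realized at some $z\in B_{r/32}(x_0)$.

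Next, for each $y\in B_{r/128}(z)$, I would slide the paraboloid $P_y(x)=Q_{y_1}(x)-\tfrac{La}{2}|x-y|^2$, of opening $-(L+1)a$ and vertex $\tfrac{y_1+Ly}{L+1}$, from below until it touches $u$ at some $\bar x$. For $L$ universal and large, the combination $u\ge Q_{y_1}$ globally together with $u(z)\le Q_{y_1}(z)+C_1ar^2$ as in \eqref{sm14}--\eqref{sm15} forces $|\bar x-z|<r/32$, so $\bar x\in B_{r/16}(x_0)$; and the estimate $Q_{y_1}(\bar x)\le 5a$ combined with the minimization inequality gives $u(\bar x)\le La$. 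Hence $\bar x\in B_{r/16}(x_0)\cap A_{C_ia}$ with $C_i=L+1$.

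Finally, as $y$ ranges over $B_{r/128}(z)$, the vertices $\tfrac{y_1+Ly}{L+1}$ fill a ball of radius $\tfrac{Lr}{128(L+1)}$ whose volume is comparable to $|B_r(x_0)|$. Applying Lemma \ref{smesti} with $B$ equal to this vertex ball and $a$ replaced by $(L+1)a$, the measure bound becomes $c_1|B|\le|B_{r/16}(x_0)\cap A_{C_ia}|+\mu_i^n C|B_r(x_0)|$; choosing $\mu_i$ small absorbs the error term and yields the desired $c_i|B_r(x_0)|\le|B_{r/16}(x_0)\cap A_{C_ia}|$. The only genuinely technical step is the eigenvalue verification for $\psi$ in Step~2, which is the interior analogue of \eqref{sm9}; the rest is a routine transcription of the arguments in \cite{sa} and \cite{CF} minus the Neumann bookkeeping that was needed in Lemmas \ref{smb} and \ref{smib}.
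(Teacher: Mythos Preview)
Your proposal is correct and follows precisely the approach the paper has in mind: the paper does not actually prove Lemma~\ref{smi} but simply records it as the interior statement from \cite[Lemma~2.4]{CF} (and \cite{sa}), and your sketch is exactly that interior argument---the boundary proofs of Lemmas~\ref{smb} and~\ref{smib} stripped of the Neumann bookkeeping. The only minor comment is that in the final application of Lemma~\ref{smesti} you should note that the vertex ball is compactly contained in $B_1^+$ (this follows since for $L$ large the vertices $\tfrac{y_1+Ly}{L+1}$ cluster near $z\in B_{r/32}(x_0)$, and $B_{4r}(x_0)\subset B_1^+$ keeps $z$ away from $\{x_n=0\}$), but this is routine.
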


\subsection*{Boundary version of measure~decay}
We now prove a  boundary version of the  covering  lemma  that corresponds to  lemma 2.3  in \cite{sa}. Similar to the interior case, such a covering lemma is one of the crucial ingredients in our proof of the oscillation decay estimate as in Theorem \ref{rescaled} below.
\begin{lem}\label{smdecay}

Let $D_{0},D_{1}$ be two closed sets satisfying
\[\emptyset\not=D_{0}\subset D_{1}\subset\overline{B^{+}_{r_{0}}}.\]
and $\sigma_{1},\sigma_{2},\sigma_{3} \in (0,1)$  be such that for $r_0 \leq \frac{1}{14}$,  the  following  hypotheses are satisfied,


\begin{equation*}
\text{H(I)}\left\{
\begin{aligned}{}
&\text{Whenever}~x\in B_{r_{0}}^0~\text{and for some}~r>0,~\text{one has}\\
&(i)~~B^{+}_{4r}(x)\subset B^{+}_{1},\\
&(ii)~~B^{+}_{\frac{r}{16}}(x)\subset B^{+}_{r_{0}},\\
&(iii)~~\overline{B^{+}_{r}(x)}\cap D_{0}\not=\emptyset,\\
&\text{then,}\\
&|B^{+}_{\frac{r}{16}}(x)\cap D_{1}|\geq\sigma_{1}|B^{+}_{r}(x)|.
\end{aligned}
\right.
\end{equation*}
\begin{equation*}
\text{H(II)}\left\{
\begin{aligned}{}
&\text{Whenever}~x\in B^{+}_{r_{0}}~\text{and for some}~r>0,~\text{one has}\\
&(i)~~x_{n}\geq\frac{r}{16},\\
&(ii)~~B_{4r}^+(x)\subset B^{+}_{1},\\
&(iii)~~B_{\frac{r}{16}}(x)\subset B^{+}_{r_{0}},\\
&(iv)~~\overline{B_{r}^+(x)}\cap D_{0}\not=\emptyset,\\
&\text{then,}\\
&|(B_{\frac{r}{16}}(x)\cap D_{1}|=|B_{\frac{r}{16}}^+(x)\cap D_{1}|\geq\sigma_{2}|B_{r}^+(x)|.
\end{aligned}
\right.
\end{equation*}
\begin{equation*}
\text{H(III)}\left\{
\begin{aligned}{}
&\text{Whenever}~x\in B^{+}_{r_{0}}~\text{and for some}~r>0,~\text{one has}\\
&(ii)~~B_{4r}(x)\subset B^{+}_{1},\\
&(iii)~~B_{\frac{r}{16}}(x)\subset B^{+}_{r_{0}},\\
&(iv)~~\overline{B_{r}(x)}\cap D_{0}\not=\emptyset,\\
&\text{then,}\\
&|(B_{\frac{r}{16}}(x)\cap D_{1}|\geq\sigma_{3}|B_{r}(x)|.
\end{aligned}
\right.
\end{equation*}
In that case, we  have that  the following estimate holds,
\begin{equation}
|B^{+}_{r_{0}}\setminus D_{1}|\leq(1-\sigma)|B^{+}_{r_{0}}\setminus D_{0}|,
\end{equation}
for some $\sigma\in (0,1).$
\end{lem}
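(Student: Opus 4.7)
The plan is to reduce the claim to a uniform density estimate for the difference $D_1 \setminus D_0$ via a Vitali-type covering argument. Since $D_0 \subset D_1 \subset \overline{B_{r_0}^+}$, the disjoint decomposition of $B_{r_0}^+$ into $(B_{r_0}^+\setminus D_1)$, $(D_1\setminus D_0)\cap B_{r_0}^+$, and $D_0\cap B_{r_0}^+$ gives
\begin{equation*}
|B_{r_0}^+ \setminus D_1| = |B_{r_0}^+ \setminus D_0| - |(D_1 \setminus D_0) \cap B_{r_0}^+|,
\end{equation*}
so it suffices to produce a universal $\sigma \in (0,1)$ with $|(D_1 \setminus D_0) \cap B_{r_0}^+| \geq \sigma\, |B_{r_0}^+ \setminus D_0|$. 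For each $x \in B_{r_0}^+ \setminus D_0$ I would set $r(x) := \text{dist}(x, D_0)$; since $D_0$ is a non-empty closed subset of $\overline{B_{r_0}^+}$, we have $0 < r(x) \leq 2r_0 \leq 1/7$, and combined with $r_0 \leq 1/14$ this guarantees every ball of radius at most $4r(x)$ centered in $\overline{B_{r_0}^+}$ sits inside $B_1$.

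I would then classify each $x$ by its depth into the upper half space and produce a paired \emph{outer} ball $B(x)$ and \emph{inner} ball $b(x)$ of radius $16$ times smaller: \textbf{(A)} if $x_n \geq 4r(x)$ then $B_{4r(x)}(x) \subset B_1^+$ and H(III) applies at $(x, r(x))$; \textbf{(B)} if $r(x)/16 \leq x_n < 4r(x)$ then H(II) applies at $(x, r(x))$; \textbf{(C)} if $x_n < r(x)/16$, set $\bar{x} := (x', 0) \in B_{r_0}^0$ and pick $\rho(x) \in [r(x), 2r(x)]$ so that $\overline{B_{\rho(x)}^+(\bar{x})}$ meets $D_0$ and $x \in B_{\rho(x)/16}^+(\bar{x})$, which is possible because the triangle inequality yields $\text{dist}(\bar{x}, D_0) \in [15r(x)/16, 17r(x)/16]$ and $x_n < r(x)/16$, and then apply H(I). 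When the inner ball threatens to leave $B_{r_0}^+$ (i.e.\ $x$ is near the spherical part of $\partial B_{r_0}^+$), I further shrink $r(x)$ by a universal factor, which preserves $\overline{B_r(x)} \cap D_0 \neq \emptyset$ at a slightly enlarged radius. In every case $x \in B(x)$, the inner ball $b(x)$ is disjoint from $D_0$ (since $r(x) = \text{dist}(x,D_0)$), and one has $|b(x) \cap D_1| \geq \sigma_0 |B(x)|$ with $\sigma_0 := \min(\sigma_1,\sigma_2,\sigma_3)$.

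Finally I apply Vitali's covering lemma to $\{B(x)\}_{x \in B_{r_0}^+\setminus D_0}$ to extract a countable disjoint subfamily $\{B_i\}$ with $\bigcup 5B_i \supset B_{r_0}^+\setminus D_0$; the associated inner balls $\{b_i\}$ are then also pairwise disjoint and contained in $B_{r_0}^+$, and summing the density bounds gives
\begin{equation*}
|(D_1 \setminus D_0) \cap B_{r_0}^+| \;\geq\; \sum_i |b_i \cap D_1| \;\geq\; \frac{\sigma_0}{5^n}\, |B_{r_0}^+ \setminus D_0|,
\end{equation*}
which is the claim with $\sigma := \sigma_0/5^n$. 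The main obstacle I expect is the bookkeeping in case (C): the shift of center from $x$ to $\bar{x}$ has to be reconciled simultaneously with (i) $x$ belonging to the outer ball used in Vitali so that the covering property is preserved, (ii) the inner ball $B_{\rho(x)/16}^+(\bar{x})$ sitting inside $B_{r_0}^+$ so that it contributes correctly to the global measure, and (iii) the closure $\overline{B_{\rho(x)}^+(\bar{x})}$ genuinely meeting $D_0$ as demanded by H(I). The strict inequality $x_n < r(x)/16$ together with the narrow enlargement $\rho(x) \in [r(x), 2r(x)]$ are precisely what make these three constraints compatible.
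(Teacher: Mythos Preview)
Your overall scheme---case analysis on the height of $x$ followed by a Vitali covering---is exactly the paper's strategy, and the reduction to the lower bound on $|(D_1\setminus D_0)\cap B_{r_0}^+|$ together with the observation that the inner ball avoids $D_0$ is correct and clean.

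There is, however, a genuine gap in how you deal with points near the \emph{spherical} part of $\partial B_{r_0}^+$. All three hypotheses require $B_{r/16}(\cdot)\subset B_{r_0}^+$ for the \emph{same} $r$ at which the condition $\overline{B_r(\cdot)}\cap D_0\neq\emptyset$ is tested. If $|x|$ is close to $r_0$, no universal shrinking of $r(x)$ can force $B_{r/16}(x)\subset B_{r_0}$: with $r'=r(x)/c$ one still needs $|x|+r'/16<r_0$, which fails once $r_0-|x|<r(x)/(16c)$. Worse, as soon as $r'<r(x)=\mathrm{dist}(x,D_0)$ the closure $\overline{B_{r'}(x)}$ misses $D_0$ entirely, so the clause ``preserves $\overline{B_r(x)}\cap D_0\neq\emptyset$ at a slightly enlarged radius'' cannot be reconciled with the single-$r$ format of the hypotheses.

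The paper's remedy is not to shrink the radius but to \emph{shift the center tangentially toward the origin}: set $r=\tfrac{8}{7}\,\mathrm{dist}(x_0,D_0)$ and replace $x_0$ by $x_1=x_0-\tfrac{r}{16}\,\tfrac{P(x_0)}{|P(x_0)|}$ (with $P$ the projection onto $\{x_n=0\}$). The shift of size $r/16$ guarantees $B_{r/16}(x_1)\subset B_{r_0}^+$, while the mild enlargement $\tfrac{8}{7}$ absorbs the shift so that $\overline{B_r(x_1)}$ still meets $D_0$. Because $B_{r/16}(x_1)\subset B_{r/4}(x_0)$, the density bound is then recorded on $B_{r/4}(x_0)\cap B_{r_0}^+$, i.e.\ on a ball centered at the \emph{original} point, and the Vitali step proceeds with the balls $B_{r}(x_0)$ as you intended. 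If you graft this center-shift into your cases (A)--(C) in place of the radius shrinking, your argument goes through.
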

\begin{proof}
Given $x_{0}\in B^{+}_{r_{0}}\setminus D_{0},$ set $\bar{r}=\text{dist}\{x_{0},D_{0}\}\leq 2r_{0}.$ Let us also define $r=\frac{8}{7}\bar{r}.$ We will first show that  for some $\sigma >0$, the following estimate holds,
\begin{equation}\label{cl1}
\begin{aligned}{}
|B_{\frac{r}{4}}(x_{0})\cap B^{+}_{r_{0}}\cap D_{1}|  \geq\sigma |B_{r}(x_{0})\cap B^{+}_{r_{0}}(x_{0})|.
\end{aligned}
\end{equation}

The proof of \eqref{cl1} is based on a case by case argument  depending on the distance  of $x_{0}$ from $\{x_n=0\}$.  Note that there are $4$ possibilities.
\noindent \begin{enumerate}
 \item[Case (i)]~$x_{0}\in B_{r_{0}}^0$.
 \item[Case (ii)]~$0<(x_{0})_{n}<\frac{r}{16}=\frac{\bar{r}}{14}.$
 \item[Case (iii)]~$\frac{r}{16}\leq (x_{0})_{n}<r_{0}-\frac{r}{16}.$
 \item[Case (iv)]~$r_{0}-\frac{r}{16}\leq (x_{0})_{n}\leq r_{0}.$
\end{enumerate}
\text{Case-(i)} In this case let us define
\[x_{1}=x_{0}-\frac{r}{16}\frac{x_{0}}{|x_{0}|}\in T_{r_{0}}.\]
when $|x_0| >0$. Otherwise, we take $x_1=x_0$. Then it is easy to observe that the following hold:\\
(a)~$B^{+}_{\frac{r}{16}}(x_{1})\subset B^{+}_{r_{0}},$\\
(b)~~$B^{+}_{\frac{r}{16}}(x_{1})\subset B^{+}_{\frac{r}{8}}(x_{0}).$\\
(c)~$B^{+}_{r}(x_{1})\cap D_{0}\not=\emptyset.$  

\medskip

(a) and (b) are easy consequences of triangle inequality.  (c) can be seen as follows. Since $\bar{r}=\text{dist}\{x_{0},D_{0}\}$, therefore there exists   $z_{0}\in D_{0}$ such that $|x_{0}-z_{0}|=\bar{r}.$ Thus
\begin{equation*}
\begin{aligned}{}
|z_{0}-x_{1}|&\leq|z_{0}-x_{0}|+|x_{0}-x_{1}|\\
&< \bar{r}+\frac{\bar{r}}{14}=\frac{15\bar{r}}{14}<\frac{16\bar{r}}{14}=r.
\end{aligned}
\end{equation*}
This implies that $z_{0}\in B^{+}_{r}(x_{1})$ and hence $z_{0}\in B^{+}_{r}(x_{1})\cap D_{0}.$
Then we observe that the following holds,

\medskip
(d)~~$B^{+}_{4r}(x_{1})\subset B^{+}_{1}.$

\medskip

 In fact, since $(x_{1})_{n}=0,$ $|x_{1}|\leq r_{0},$ and $r\leq 3r_{0},$ therefore if $x\in B^{+}_{4r}(x_{1}),$  then
\[|x|\leq|x-x_{1}|+|x_{1}|<4r+r_{0}\leq 13r_{0} < 1.\]
Therefore in this situation we  see that the conditions in  H(I) are satisfied and consequently we have 
\begin{equation}\label{sm33}
|B^{+}_{\frac{r}{16}}(x_{1})\cap D_{1}|\geq\sigma_{1}|B^{+}_{r}(x_{1})|.
\end{equation}
Thus from \eqref{sm33}, we find
\begin{equation}\label{sm34}
\begin{aligned}{}
\sigma_{1}|B_{r}(x_{0})\cap B^{+}_{r_{0}}|&\leq\sigma_{1}|B^{+}_{r}(x_{0})|\\
&=\sigma_{1}|B^{+}_{r}(x_{1})|~~(\text{since the measure is translation invariant})\\
&\leq|B^{+}_{\frac{r}{16}}(x_{1})\cap D_{1}|~~(\text{by}~~\eqref{sm33})\\
&\leq |B^{+}_{\frac{r}{8}}(x_{0})\cap B^{+}_{r_{0}}\cap D_{1}|~~(\text{by~observation~}~(a),(b))\\
&\leq|B_{\frac{r}{8}}(x_{0})\cap B^{+}_{r_{0}}\cap D_{1}|\\
&\leq|B_{\frac{r}{4}}(x_{0})\cap B^{+}_{r_{0}}\cap D_{1}|.
\end{aligned}
\end{equation}
\eqref{cl1} thus follows in this case. We now consider Case (ii).\\
In this case we have $0<(x_{0})_{n}<\frac{\bar{r}}{14}=\frac{r}{16}.$ Let us consider the following shifted point corresponding to $x_0$.
\begin{equation}\label{smphi}
x_{1}=
\begin{cases}
    P(x_{0})-\frac{\bar{r}}{14}\frac{P(x_{0})}{|P(x_{0})|}, & \text{if}~P(x_{0})\not=0\\
    0,& \text{if }~P(x_{0})=0
\end{cases}
\end{equation}
\\

where $P(x_{0})$ is the projection of $x_{0}$ on $\{x\in\R^{n}~~|~~x_{n}=0\}.$ We first note  that $(x_{1})_{n}=0.$  
Moreover we  easily observe that the following hold,
\begin{enumerate}
\item[(a')]~$B^{+}_{\frac{r}{16}}(x_{1})\subset B^{+}_{r_{0}}.$
\item[(b')]~$\overline{B_{r}^{+}(x_{1})}\cap D_{0}\not=\emptyset.$ 
\item[(c')]~$B^{+}_{\frac{r}{16}}(x_{1})\subset B_{\frac{r}{4}}^+(x_{0}) \subset B_{\frac{r}{4}}(x_{0})$.
\item[(d')]~$B^{+}_{4r}(x_{1})\subset B^{+}_{14r_{0}}\subset B^{+}_{1}$ since $r_{0}\leq\frac{1}{14}.$ \end{enumerate}
(a'), (c') and (d') follow easily from triangle inequality. (b') can be seen as follows. As in Case i),  let $z_{0}\in D_{0}$ be such that $|x_{0}-z_{0}|=\bar{r}.$ Then 
\begin{equation*}
\begin{aligned}{}
|x_{1}-z_{0}|&\leq|x_{1}-P(x_{0})|+|P(x_{0})-x_{0}|+|x_{0}-z_{0}|
&<\frac{\bar{r}}{14}+\frac{\bar{r}}{14}+\bar{r}=\frac{8\bar{r}}{7}=r,
\end{aligned}
\end{equation*}
(b') thus follows.

In view of the observations (a'),(b') and (d') and  (HI), we get
\begin{equation}\label{sm35}
|B^{+}_{\frac{r}{16}}(x_{1})\cap D_{1}|\geq \sigma_{1}|B^{+}_{r}(x_{1})|.
\end{equation}
We then note that
\begin{enumerate}
\item[(a")]~$|B^{+}_{r}(x_{1})|=|B^{+}_{r}(P(x_{0}))|$~\hspace{0.5cm}~\big(because $(x_{1})_{n}=(P(x_{0}))_{n}=0$\big).
\item[(b")]~$|B^{+}_{r}(P(x_{0}))|=|B^{+}_{r}(x_{0})\cap\{x~|~x_{n}\geq (x_{0})_{n}\}|$~\hspace{0.5cm}\big( because the  measure is translation invariant\big).
\item[(c")]~$|B^{+}_{r}(x_{0})\cap\{x~|~x_{n}\geq (x_{0})_{n}\}|=\frac{1}{2}|B_{r}(x_{0})|.$
\end{enumerate}
Thus
\begin{equation}\label{sm36}
\begin{aligned}{}
|B_{\frac{r}{4}}(x_{0})\cap B^{+}_{r_{0}}\cap D_{1}|&\geq |B^{+}_{\frac{r}{16}}(x_{1})\cap D_{1}|~\hspace{0.5cm}\big(\text{by}~(c')\big)\\
&\geq\sigma_{1}|B^{+}_{r}(x_{1})|~~\hspace{0.5cm}\big(\text{by}~~\eqref{sm35}\big)\\
&=\sigma_{1}|B^{+}_{r}(P(x_{0}))|~~\hspace{0.5cm}\big(\text{by}~~(a")\big)\\
&=\frac{\sigma_{1}}{2}|B_{r}(x_{0})|~~\hspace{0.5cm}\big(\text{by}~(b")~\text{and}~(c")\big)\\
&\geq\frac{\sigma_{1}}{2}|B_{r}(x_{0})\cap B^{+}_{r_{0}}(0)|.
\end{aligned}
\end{equation}
\eqref{cl1} thus follows  in this case as well. 

\medskip

We now look  at Case (iii).
In this case similar to that of Case (ii),  we consider the following shifted point corresponding to $x_0$, 
\begin{equation*}
x_{1}=
\begin{cases}
    x_{0}-\frac{\bar{r}}{14}\frac{P(x_{0})}{|P(x_{0})|}, & \text{if}~P(x_{0})\not=0\\
    x_0,& \text{if }~P(x_{0})=0.
\end{cases}
\end{equation*}
We then make the following observations.
\begin{enumerate}
\item[(e')]~From the choice of $x_{1}$ and the fact $\frac{r}{16}<(x_{0})_{n}=(x_{1})_{n}<r_{0}-\frac{r}{16},$ we find that
\[B_{\frac{r}{16}}(x_{1})\subset B^{+}_{r_{0}}~~~\text{and}~~~B_{\frac{r}{16}}(x_{1})\subset B_{\frac{r}{4}}(x_{0}).\]
\item[(f')]~By arguing as in the previous case, we also have 
\[\overline{B_{r}^+(x_{1})}\cap D_{0}\not=\emptyset.\]
\item[(h')]Likewise  we have $B_{4r}^+(x_{1})\subset B^{+}_{14r_{0}}\subset B^{+}_{1}(0).$
\end{enumerate}
So in view of above observations (e'), (f') and (h') , we find that the conditions in H(II) are satisfied and consequently we have
\begin{equation}\label{sm37}
|B_{\frac{r}{16}}(x_{1})\cap D_{1}|\geq\sigma_{2}|B_{r}^+(x_{1})|.
\end{equation}
Now in  order to get appropriate measure estimate in terms of ball centered at $x_{0}$ instead of $x_{1},$ let us also  observe that
\begin{enumerate}
\item[(d")]~Since $(x_{0})_{n}=(x_{1})_{n},$ hence
\[|B_{r}^+(x_{1})|=|B_{r}^+(x_{0})|.\]
\end{enumerate}
Therefore, we have
\begin{equation}\label{sm39}
\begin{aligned}{}
|\big(B_{\frac{r}{4}}(x_{0})\cap B^{+}_{r_{0}}\big)\cap D_{1}|&\geq |B_{\frac{r}{16}}(x_{1})\cap D_{1}|~\hspace{0.5cm}~\big(\text{by}~(e')\big)\\
&\geq\sigma_{2}|B_{r}^+(x_{1})|~\hspace{0.5cm}~\big(\text{by}~~\eqref{sm37}\big)\\
&=\sigma_{2}|B_{r}^+(x_{0})|~~\hspace{0.5cm}\big(\text{by}~~(d")\big)\\
&\geq\sigma_{2}|B_{r}(x_{0})\cap B^{+}_{r_{0}}|~\hspace{0.5cm}~
\end{aligned}
\end{equation}
We  finally note that  Case (iv) corresponds to the interior case and therefore by repeating the arguments  as in \cite{CF} ( given that H(III) holds) we will have
\begin{equation}\label{sm41}
\begin{aligned}{}
|B_{\frac{r}{4}}(x_{0})\cap B^{+}_{r_{0}}\cap D_{1}| \geq \sigma_{3}|B_{r}(x_{0})\cap B^{+}_{r_{0}}(x_{0})|.
\end{aligned}
\end{equation}
Thus in view of \eqref{sm34}, \eqref{sm36} \eqref{sm39} and \eqref{sm41}, it is clear that the estimate in \eqref{cl1} follows by letting  $\sigma=\min\{\sigma_{1}/2,~\sigma_{2},\sigma_{3}\}.$

\medskip

Now, for every $x\in B^{+}_{r_{0}}\setminus D_{0},$ we consider the ball centered at $x$ of  radius $r:=\text{dist}\{x,~D_{0}\}$. Then  by applying  Vitali covering's Lemma to this family,  we can  extract a subfamily $\{B_{r_{j}}(x_{j})\}$ such that the balls $\{B_{\frac{r_{j}}{3}}(x_{j})\}$ are disjoint. In particular, $\{B_{\frac{r_{j}}{4}}(x_{j})\}'s$ are disjoint. Hence,
\begin{equation}\label{sm42}
\begin{aligned}{}
|B^{+}_{r_{0}}\setminus D_{0}|&\leq  \sum_{j}|\Big(B_{r_{j}}(x_{j})\cap B^{+}_{r_{0}}\Big)\setminus D_{0}|\\
&\leq\sigma^{-1}\sum_{j}|\Big(B_{\frac{r_{j}}{4}}(x_{j})\cap B^{+}_{r_{0}}\Big)\cap(D_{1}\setminus D_{0})|\\
&\leq \sigma^{-1} |B^{+}_{r_{0}}\cap(D_{1}\setminus D_{0})|.
\end{aligned}
\end{equation}
From  \eqref{sm42} it follows that, 
\begin{equation}\label{sm43}
\begin{aligned}{}
|B^{+}_{r_{0}}\setminus D_{1}|&=|B^{+}_{r_{0}} \setminus D_0|-|B^{+}_{r_{0}}\cap (D_{1}  \setminus D_0)|\\
& \leq (1-\sigma) |B^{+}_{r_0} \setminus D_0|.
\end{aligned}
\end{equation}
This finishes the proof.
\end{proof}
Now, we are ready to prove the main oscillation decay result in this section. Before stating such a result, we make the following remarks.
\begin{rem}\label{smr1}
From now on, we let  $C=\max\{C_{i}, C_{ib},C_{b}\},$ $c=\min\{c_{i},c_{ib},c_{b}\}$ and $\mu=\min\{\mu_{i},\mu_{ib},\mu_{b}\},$ where triplet $(C_{b},c_{b},\mu_{b}), (C_{ib},c_{ib},\mu_{ib})$ and $(C_{i},c_{i},\mu_{i})$ are respectively from  the lemmas \ref{smb}, \ref{smib} and \ref{smi}. It is clear from the proofs  that if we replace such triplets in the hypothesis of the  respective Lemmas by $(C, c,\mu)$ then we get that the concluding inequality holds in all lemmas with $A_{Ca}$ instead of  $A_{C_{b}a},$ $A_{C_{ib}a}$ and $A_{C_{i}a}$.
\end{rem}

\begin{rem}
We would also like to remark that from here onwards, we would deal with the following non-homogeneous Neumann boundary value problem, 
\begin{equation}\label{sm44}
\left\{
\begin{aligned}{}
\tilde F(D^2u,Du,x)&= f~\text{in}~B^{+}_{1},\\
u_{x_n}&=g~\text{on}~B_1^0.\\
\end{aligned}
\right.
\end{equation}
\end{rem}
\begin{thm}\label{th1}
Let  $u\in C(B^{+}_{1}\cup B_1^0)$  be a viscosity solution \eqref{sm44} where $F$ satisfies the structure conditions SC1)-SC3) and $f \in C(\overline{B_1^+})$. Let $\lambda, \Lambda$ and $\delta$ be as in SC1)- SC3). Then there exist universal constants $\nu,\epsilon,\rho,\theta\in(0,1)$ such that if for some $\delta'$  satisfying $\delta' \leq \theta \delta$ the following hold, 
\begin{equation}\label{sm45}
\left\{
\begin{aligned}{}
\|f\|_{L^{\infty}(B^{+}_{1})}&\leq \epsilon\delta',\\
\|g\|_{L^{\infty}(B_1^0)}&\leq\epsilon\delta',\\
\text{osc}_{B^{+}_{1}}u&\leq\delta'
\end{aligned}
\right.
\end{equation}
then
\begin{equation}\label{smosc}
osc_{B^{+}_{\rho}}u\leq(1-\nu)\delta'.
\end{equation}
\end{thm}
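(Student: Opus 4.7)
The plan is to combine the iterated measure decay of Lemma \ref{smdecay} with a dichotomy plus duality argument, and then to upgrade the resulting $L^{\epsilon}$-type measure bound to a pointwise oscillation bound by a barrier. I begin by subtracting $\inf_{B_1^+} u$ so that $0 \leq u \leq \delta'$. Writing $v := \delta' - u$, a direct check shows that $v$ solves a Neumann problem of the same shape with the dual operator $\tilde F^{*}(M, p, x) := -\tilde F(-M, -p, x)$, which still obeys SC1)-SC3) with the same constants $\lambda, \Lambda, \delta$ and boundary datum $-g$. By swapping $u$ and $v$ if necessary, I may therefore assume
\[
\bigl|\{u \leq \delta'/2\} \cap B_{r_0}^+\bigr| \geq \tfrac{1}{2}\,|B_{r_0}^+|
\]
for a fixed universal $r_0 \in (0, 1/14]$. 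The nonhomogeneous Neumann datum is small by hypothesis ($\|g\|_{L^{\infty}(B_1^0)} \leq \epsilon \delta'$) and is absorbed into the barrier constructions from the proofs of Lemmas \ref{smb}-\ref{smib} via the device of replacing $u$ with $u - \|g\|_{L^\infty} x_n$, which shifts the uniform ellipticity window of $\tilde F$ by a negligible amount.

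Using the unified universal constants $(C, c, \mu)$ from Remark \ref{smr1}, I set $a_0 := \delta'/2$ and define the nested closed contact sets
\[
D_k := \overline{A_{C^k a_0}} \cap \overline{B_{r_0}^+}, \qquad k = 0, 1, \ldots, k_0.
\]
The initial set $D_0$ is non-empty: Lemma \ref{smesti}, applied to a compact subset of $\{u \leq \delta'/2\} \cap B_{r_0/2}^+$, produces an interior contact point with $u \leq \delta'/2$, the Neumann condition excluding contact on $B_1^0$. For each $k \leq k_0$, provided the constraints
\[
C^k a_0 \leq \delta/C \qquad\text{and}\qquad \|f\|_{L^\infty(B_1^+)} \leq \mu\,C^k a_0
\]
hold—which is ensured by choosing $\theta$ small enough that $C^{k_0+1} a_0 \leq \delta$ and $\epsilon$ small enough that $\epsilon \delta' \leq \mu a_0$—Lemmas \ref{smb}, \ref{smib} and \ref{smi} verify the covering hypotheses H(I), H(II), H(III) of Lemma \ref{smdecay} for the pair $(D_{k-1}, D_k)$. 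Iterating Lemma \ref{smdecay} yields the geometric measure decay
\[
\bigl|B_{r_0}^+ \setminus D_{k_0}\bigr| \leq (1-\sigma)^{k_0}\,|B_{r_0}^+|,
\]
which, since $D_k \subset \{u \leq C^k a_0\}$, is an $L^{\epsilon}$-type estimate $|\{u > C^k a_0\} \cap B_{r_0}^+| \leq (1-\sigma)^k|B_{r_0}^+|$ for $k = 0, \ldots, k_0$.

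Finally, by choosing $k_0$ and the parameters $\theta, \epsilon, \nu$ consistently, the above measure-decay estimate is converted into the pointwise bound $\sup_{B_\rho^+} u \leq (1-\nu)\delta'$ via a final barrier argument modeled on the construction in Lemma \ref{smb} at scale $\rho$: around any candidate bad point $x_\ast \in B_\rho^+$ the almost-full measure of $D_{k_0}$ furnishes abundant contact points where $u$ is controlled, and a sliding paraboloid based at these contact points yields a contradiction with the uniform ellipticity of $\tilde F$ on $B_\delta$ should $u(x_\ast)$ exceed $(1-\nu)\delta'$. Combining with $u \geq 0$ then gives $\mathrm{osc}_{B_\rho^+} u \leq (1-\nu)\delta'$. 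The main obstacle lies precisely in this measure-to-pointwise upgrade: the sliding-paraboloid iteration delivers measure decay but no direct pointwise control, and the closing barrier step must simultaneously handle the degeneracy of $\tilde F$, the nonhomogeneous Neumann datum $g$, and the small parameters $\theta, \epsilon, \nu, k_0$ in a balanced way—the hypothesis $\delta' \leq \theta \delta$ with $\theta$ tiny is precisely what keeps all relevant paraboloid gradients within the uniform-ellipticity window of $\tilde F$ and makes the scheme close.
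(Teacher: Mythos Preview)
Your scheme has a genuine gap in the choice of the initial opening $a_0$, which makes the iterated measure decay vacuous and leaves the ``measure-to-pointwise upgrade'' impossible as stated.

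You set $a_0 = \delta'/2$ and iterate the contact sets $D_k = \overline{A_{C^k a_0}}$ up to the largest $k_0$ with $C^{k_0+1} a_0 \le \delta$. Lemma \ref{smdecay} then gives $|B_{r_0}^+ \setminus D_{k_0}| \le (1-\sigma)^{k_0}|B_{r_0}^+|$, so $D_{k_0}$ has almost full measure. But by definition $D_{k_0} \subset \{u \le C^{k_0} a_0\}$, and $C^{k_0} a_0$ is comparable to $\delta/C$, which is \emph{much larger} than $\delta'$ (recall $\delta' \le \theta\delta$ with $\theta$ tiny). In other words, after the iteration you only learn that $u \lesssim \delta$ on almost all of $B_{r_0}^+$, which is strictly weaker than the assumed oscillation bound $u \le \delta'$. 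No barrier built from contact points in $D_{k_0}$ can then force $u \le (1-\nu)\delta'$ at a bad point, because the control you have at those contact points is already weaker than the target. Your own final paragraph implicitly concedes this: the ``closing barrier step'' is described only in outline, and the ingredients listed do not supply a bound of size $(1-\nu)\delta'$.

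The paper's proof runs the iteration in the opposite regime. It starts with a \emph{very small} opening $a = \mathfrak{N}\nu\delta'$ (with $\mathfrak{N}$ fixed large and $\nu$ tiny) and iterates up to $a\tilde C^{k_0} \approx \delta'/4$, so that on $\tilde A_{a\tilde C^{k_0}}$ one genuinely has $u \lesssim \delta'/4$, and the number of steps $k_0 \sim |\log_{\tilde C}(\mathfrak{N}\nu)|$ is large when $\nu$ is small. The price is that to seed the iteration one needs a point where $u$ is within $\sim \nu\delta'$ of its infimum; this is exactly hypothesis \eqref{sm46}. The closing step is not a barrier but a second sliding-paraboloid argument from above (Step 3), which under hypothesis \eqref{sm47} produces a set $E \subset B_{r_0}^+ \setminus \tilde A_{a\tilde C^{k_0}}$ of \emph{fixed} positive measure; this contradicts the $(1-\tilde\sigma)^{k_0}$ bound once $\nu$ is small. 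The dichotomy is therefore ``either \eqref{sm46} fails, or \eqref{sm47} fails, or both hold and we reach a contradiction''; in the first two cases the oscillation decay is immediate. Your half-measure dichotomy $|\{u \le \delta'/2\}| \ge \tfrac12 |B_{r_0}^+|$ does not provide the small-value seed needed to start the iteration at opening $\sim \nu\delta'$, and starting at opening $\delta'/2$ as you do yields nothing.
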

\begin{proof}
We closely follow the ideas as in the proof of Proposition 2.2 in \cite{CF} with suitable  modifications  in our  situation. Let  $c_{1}$ be the constant from Lemma \ref{smesti}, when  the fully nonlinear operator $\tilde F$ under consideration is uniformly elliptic with ellipticity constants $\lambda,\Lambda$ in the region $p\in B_{\frac{\delta}{2}}$ instead of $ B_{\delta}.$ Also we fix $r_{0}$ sufficiently small so that Lemma \ref{smdecay} holds and then let $r_{1}=\frac{r_{0}}{16}.$ Let $\nu<\frac{1}{6}$ and $\mathfrak{N}$ be universal constants  to be chosen later such that additionally  the following is satisfied,

\begin{equation}\label{rt0}
\mathfrak{N}\nu<<1.
\end{equation}
Let us set 
\begin{equation}\label{defa}
a=\mathfrak{N}\nu\delta'\ \text{and}\ m=\inf_{B^{+}_{1}}u.
\end{equation} Suppose  that there exists $x_{0}\in B^{+}_{\frac{r_{0}}{2}}$ such that

\medskip

\emph{Assertion A:}

\begin{equation}\label{sm46}
u(x_{0})+\|g\|_{L^{\infty}(B_1^0)}-m<\frac{3}{2}\nu\delta'
\end{equation}
as well as 
\begin{equation}\label{sm47}
\sup_{B^{+}_{r_{1}}}u-\|g\|_{L^{\infty}(B_1^0)}-m>\frac{\delta'}{2}.
\end{equation}
We now make the following claim.

\medskip

\emph{Claim:}  The \emph{Assertion A} is false,  i.e. both the inequalities \eqref{sm46}, \eqref{sm47} cannot hold at the same time.

\medskip

Subsequently we show that this  leads to the validity of the oscillation decay as asserted  in \eqref{smosc} above.

\medskip

In order to prove the claim we assume on the contrary that both  the inequalities are correct and then derive  a  contradiction. 

Let us consider the following function
\begin{equation}
w=u-\|g\|_{L^{\infty}(B_1^0)}x_{n}.
\end{equation}
Then we note that $w$ satisfies the following differential inequality  in the viscosity sense
\begin{equation}\label{s48}
\left\{
\begin{aligned}{}
F_{1}(D^2w,Dw,x)&\leq f~\text{in}~B^{+}_{1},\\
w_{x_n}&\leq 0~\text{on}~B_1^0,\\
\end{aligned}
\right.
\end{equation}
where $F_{1}(M,p,x)=\tilde F(M,p+\|g\|_{L^{\infty}(B_1^0)}e_{n},x)$ and $e_{n}=(0,0,...,1).$ We have assumed that $\|g\|_{L^{\infty}(B_1^0)}\leq \epsilon\delta'$ so that  if we choose $\epsilon<\frac{\nu}{2}\leq \frac{1}{2},$ then we have that
\[\|g\|_{L^{\infty}(B_1^0)}\leq\frac{\delta'}{2}\leq\frac{\theta\delta}{2}\leq\frac{\delta}{2}~\hspace{0.5cm}~\big(\text{since}~~\theta\in(0,1)\big).\]
Consequently, $F_{1}$ is uniformly elliptic with the same ellipticity constant provided $p\in B_{\frac{\delta}{2}}.$ \\
Let us then consider the non-negative function 
\begin{equation}\label{defv}
v= u-m+(1-x_{n})\|g\|_{L^{\infty}(B_1^0)}.
\end{equation}
 It is easy to observe that $v$ satisfies \eqref{s48} in the viscosity sense because it differs from $w$ by a constant. We now let $\tilde{A}_{a}$ to be the set of points in $B^{+}_{1},$ where $v$  is bounded above by $a$ and can be touched by a paraboloid of opening $-a$ with vertex in $B^{+}_{1}.$

\medskip

\emph{Step 1:}
We first  show that given any $\eta>0$ sufficiently small depending on $r_1$, the following estimate holds 
\begin{equation}\label{sm54}
|B^{+}_{r_{0}}\cap \tilde{A}_{a}|>\frac{c_{0}}{2}|B^{+}_{r_{1}}\cap\{y~|~y_{n}>\eta\}|,
\end{equation}
with $c_0$ being independent of $\eta$.\\
In order to prove the claim, for every $y\in B^{+}_{r_{1}}\cap\{y~|~y_{n}>\eta\}$ let us consider the following paraboloid
\[P_{y}(x)=\frac{a}{2}\big[(r_{0}-r_{1})^{2}-|x-y|^{2}\big].\]
Since given $x$ for which   $|x|\geq r_{0},$ we have that  $|x-y|\geq|x|-|y|\geq r_{0}-r_{1},$ therefore $P_{y} (x)\leq0\leq v$~~for all $x\in\{z: 1>|z|\geq r_{0}\}\cap\{z_{n}>0\}.$\\
On the other hand, for all $x\in B^{+}_{\frac{r_{0}}{2}},$ we find that $|x-y|\leq |x|+|y|\leq \frac{r_{0}}{2}+r_{1}.$ Thus
\begin{equation}\label{sm48}
\left\{
\begin{aligned}{}
P_{y}(x)&\geq \frac{a}{2}\big[(r_{0}-r_{1})^{2}-\big(\frac{r_{0}}{2}+r_{1}\big)^{2}\big]\\
&=\frac{\mathfrak{N}\nu\delta'r^{2}_{0}}{2}\big[\big(\frac{15}{16}\big)^{2}-\big(\frac{9}{16}\big)^{2}\big]\\
&>\frac{3\nu\delta'}{2}\\
&\geq u(x_{0}) -m +(1-(x_{0})_{n})\|g\|_{L^{\infty}(B_1^0)}= v(x_0)\ (\text{by \eqref{sm46}}),
\end{aligned}
\right.
\end{equation}
where in the second line above, we have chosen $\mathfrak{N}$  sufficiently large so that  the third step in \eqref{sm48} above follows. Since \eqref{sm48} holds for $x\in B^{+}_{\frac{r_{0}}{2}},$ therefore, in particular, $P_{y}(x_{0})>\frac{3\nu\delta'}{2}.$ \\
Note also that $P_{y}(x)\leq a$~~for all $x,y\in B^{+}_{1}.$
Let us  now slide the paraboloids $P_{y}$ from below till it touches the function $v$ for the first time. Let $\tilde{A}$ denotes the set of contact points as $y$ varies in $B^{+}_{r_{1}}\cap\{y_{n}>\eta\}.$ Since the function $v$ satisfies \eqref{s48}, therefore  $P_{y}$ will not touch the function at any $\tilde{x}\in B_1^0.$  Otherwise by our choice of $y,$ we  would get
\begin{equation}\label{sm50}
\left\{
\begin{aligned}{}
0&\geq \partial_{x_n}(P_{y})(\tilde{x})~\hspace{0.5cm}\big(\text{because} ~v~\text{satisfies}~\eqref{s48}\big)\\
&=a(y-\tilde{x})_n\\
&=ay_{n}\geq a\eta>0~~\hspace{0.5cm}\big(\text{by the choice~of}~y\big),
\end{aligned}
\right.
\end{equation}
which is a contradiction. Therefore, in view of the above observations, we can infer that all contact points $\{\tilde x\}'s$ lie inside $B^{+}_{r_{0}}.$ Moreover  thanks to \eqref{sm48},   the following holds:
\begin{equation}\label{sm52}
\left\{
\begin{aligned}{}
0&>v(x_0)-\frac{3}{2}\nu\delta'\\
&\geq v(x_0)-P_{y}(x_{0})~~~\hspace{0.5cm}~\big(\text{by}~~\eqref{sm48}\big)\\
&\geq\min_{z\in B^{+}_{1}}\{v(z)-P_{y}(z)\}\\
&=v(\tilde x)-P_{y}(\tilde x )~\hspace{0.5cm}\big(\text{for a contact point}\ \tilde x \big)\\
&\geq v(\tilde x) -a~\hspace{0.5cm}~\big(\text{since}~~(P_{y}(x)\leq a)\big).
\end{aligned}
\right.
\end{equation}
This  implies that  $\tilde{A}\subset \tilde{A}_{a}\cap B^{+}_{r_{0}}.$ Thus by applying Lemma \ref{smesti} with $B=\overline{B^{+}_{r_{1}}} \cap \{z_{n} \geq\eta\},$ we obtain
\begin{equation}\label{sm53}
\left\{
\begin{aligned}{}
|B^{+}_{r_{0}}\cap \tilde{A}_{a}|&\geq|\tilde{A}|\\
&\geq c_{1}|B^{+}_{r_{1}}\cap\{y_{n}>\eta\}|-\frac{\|f\|^{n}_{L^{\infty}(B^{+}_{1})}}{a^{n}}|\tilde{A}|\\
&\geq c_{1}|B^{+}_{r_{1}}\cap\{y_{n}>\eta\}|-\frac{\epsilon^{n}}{\mathfrak{N}^{n}\nu^{n}}|\tilde{A}|\ (\text{using \eqref{sm45} and \eqref{defa}})\\
&\geq c_{1}|B^{+}_{r_{1}}\cap\{y_{n}>\eta\}|-\frac{\epsilon^{n}}{\mathfrak{N}^{n}\nu^{n}}|B^{+}_{r_{0}}\cap \tilde{A}_{a}|.
\end{aligned}
\right.
\end{equation}
Now, by choosing $\epsilon>0$~sufficiently small such that
\begin{equation}\label{small}
\frac{\epsilon^{n}}{\mathfrak{N}^{n}\nu^{n}} < \frac{1}{2},
\end{equation}
we obtain \eqref{sm54} with $c_0=\frac{c_1}{2}$. This finishes the proof of Step 1. \\
\emph{Step 2:} We  now show that there exists $\tilde \sigma \in (0,1)$ and $\tilde C > 0$ such that  the following estimate holds
\begin{equation}\label{s71}
|B^{+}_{r_{0}}\setminus \tilde{A}_{a\tilde{C}^{k_{0}}}|\leq (1-\tilde{\sigma})^{k_{0}}|B^{+}_{r_{0}}|,
\end{equation}
provided  $\tilde{C}^{k_{0}+1}a\leq \frac{\delta}{2}.$ From \eqref{sm54}, we find that
\begin{equation}
B^{+}_{r_{0}}\cap \tilde{A}_{a}\not=\emptyset.
\end{equation}
It is also clear that since the  sets $\tilde{A}_{a\tilde{C}^{k}}$ are increasing with respect to $k,$ therefore,
\begin{equation}
B^{+}_{r_{0}}\cap \tilde{A}_{a\tilde{C}^{k}}\not=\emptyset~~\text{for~all}~~k\in\N,
\end{equation}
where $\tilde{C}$ is  the constant as in Remark \ref{smr1} corresponding to $\delta/2$ instead of $\delta$.  Note that the hypothesis of  the  Lemmas \ref{smb}, \ref{smib} and \ref{smi} are satisfied with $\tilde C^k a$ instead of $a$  as long as $a \tilde C^{k+1} \leq \frac{\delta}{2}$. 

\medskip

Thus that for every $k\in\N,$ satisfying $a\tilde{C}^{k+1} \leq \delta/2$~ we can  apply Lemma \ref{smdecay} to the closed sets
\begin{equation}\label{sm552}
D_{0}=\overline{B^{+}_{r_{0}}}\cap \tilde{A}_{a\tilde{C}^{k}}~~\text{and}~~D_{1}=\overline{B^{+}_{r_{0}}}\cap\tilde{A}_{a\tilde{C}^{k+1}},
\end{equation}
to assert that 
\begin{equation}\label{sm56}
|B^{+}_{r_{0}}\setminus\tilde{A}_{a\tilde{C}^{k+1}}|\leq(1-\tilde{\sigma})|B^{+}_{r_{0}}\setminus\tilde{A}_{a\tilde{C}^{k}}|.
\end{equation}
Proceeding inductively,  we obtain
\begin{equation}\label{sm57}
|B^{+}_{r_{0}}\setminus\tilde{A}_{a\tilde{C}^{k}}|\leq (1-\tilde{\sigma})^{k}|B^{+}_{r_{0}}|,
\end{equation}
which completes the proof of Step 2.

\medskip

\emph{Step 3:}
We now define the following set
\begin{equation}\label{sm58}
E=\{x\in B^{+}_{r_{0}}~~|~~u(x)-m+(x_{n}-1)\|g\|_{L^{\infty}(B_1^0)}>\frac{\delta'}{4}\}.
\end{equation}
Then we claim that the following estimate holds for any $\eta>0$ sufficiently small, 
\begin{equation}\label{sm59}
|E|\geq\frac{c_1}{2}|B^{+}_{r_{1}}\cap\{y_{n}>\eta\}|,
\end{equation}

where $c_1$ is the constant from Lemma \ref{smesti}, when the operator under consideration is uniformly elliptic for $|p|<\delta/2$.\\
In order to prove \eqref{sm59}, for each $y\in B^{+}_{r_{1}}\cap\{y_{n}>\eta\},$ we consider the following paraboloid
\begin{equation}\label{sm60}
S_{y}(x)=\frac{\delta'}{(r_{0}-r_{1})^{2}}|x-y|^{2}+\frac{\delta'}{4}.
\end{equation}
By using the fact that $r_{1}=r_{0}/16,$ it is easy to observe that for all $x,y\in B^{+}_{r_{1}},$ we have
\begin{equation}\label{sm61}
S_{y}(x)\leq\frac{\delta'}{2}.
\end{equation}
Now using  \eqref{sm47}, we find
\begin{equation}\label{sm62}
\sup_{B^{+}_{r_{1}}} S_{y}(x)\leq \frac{\delta'}{2}<\sup_{B^{+}_{r_{1}}}u-\|g\|_{L^{\infty}(B_1^0)}-m\leq\sup_{B^{+}_{r_{1}}}\big(u+x_{n}\|g\|_{L^{\infty}(B_1^0)}\big)-\|g\|_{L^{\infty}(B_1^0)}-m.
\end{equation}
On the other hand for $x\in \{x~|~|x|\geq r_{0}\}\cap\{x_{n}>0\}$ since  $S_{y}(x)>\delta',$ therefore by \eqref{sm45}, we have
\begin{equation}\label{sm63}
\left\{
\begin{aligned}{}
S_{y}(x)&>\delta'\geq u(x)-m~\hspace{0.5cm}~\big(\text{by}~~\eqref{sm45}\ \text{and from the definition of $m$ as in }\ \eqref{defa} \big)\\
&\geq u(x)-m+(x_{n}-1)\|g\|_{L^{\infty}(B_1^0)}~~\hspace{0.5cm}~~\big(\text{since}~~(x_{n}-1)\|g\|_{L^{\infty}(B_1^0)}\leq0\big).
\end{aligned}
\right.
\end{equation}
Also for any $\bar{x}\in B_1^0$ and $y\in B^{+}_{r_{1}}\cap\{y_{n}>\eta\},$ we observe that
\begin{equation}\label{sm64}
\partial_{x_n}(S_{y})(\bar{x})=\frac{-2\delta'y_{n}}{(r_{0}-r_{1})^{2}} < 0.
\end{equation}
We now let  
\begin{equation}\label{defvt}
\tilde v=u+(x_{n}-1)\|g\|_{L^{\infty}(B_1^0)}-m.
\end{equation}
Then we observe that $\tilde v$ satisfies the following differential inequalities in the viscosity sense
\begin{equation}\label{sm65}
\left\{
\begin{aligned}{}
F_{2}(D^{2}\tilde v,D \tilde v,x)&\geq f~\text{in}~B^{+}_{1},\\
\tilde v_{x_n}&\geq 0~\text{on}~B_1^0,\\
\end{aligned}
\right.
\end{equation}
where $F_{2}(X,p,x)=\tilde F(X, p-\|g\|_{L^{\infty}(B_1^0)}e_{n},x),$ which is again uniformly elliptic as long as $p\in B_{\frac{\delta}{2}}.$\\
Now we slide the paraboloids $S_y$  from above until it touches the graph of $\tilde v$.   In view of \eqref{sm61},~\eqref{sm62}, \eqref{sm63} and \eqref{sm65}, all contact points lie inside $B^{+}_{r_{0}}.$ We denote by $K$ the set of all contact points as $y$ varies inside $B^{+}_{r_{1}}\cap\{y_{n}>\eta\}.$ We now apply  Lemma \ref{smesti}   from "above" to $\tilde v$, i.e. more precisely,   we apply that lemma to the function $-\tilde v$ which is touched from below by $-S_{y}(x)$. Note that in this case we have that $a=\frac{2\delta'}{(r_{0}-r_{1})^{2}}\leq \frac{2\theta\delta}{(r_{0}-r_{1})^{2}}$ since $\delta'\leq\theta\delta$. Therefore, if $\theta$ is chosen sufficiently small then we can ensure that  $0<a<\frac{\delta}{4}.$ We then observe  that  $-\tilde v$ satisfies the following inequalities
\begin{equation}\label{sm68}
\left\{
\begin{aligned}{}
G(D^{2}(-\tilde v),D(-\tilde v), x)&\leq -f~\text{in}~B^{+}_{1},\\
(- \tilde v)_{x_n}&\leq 0~\text{on}~B_1^0,\\
\end{aligned}
\right.
\end{equation}
in the viscosity sense, where $G(X, p, x)=-F_{2}(-X,-p)=-\tilde F(-X, -p-\|g\|_{L^{\infty}(B_1^0)}e_{n}, x),$ which is again uniformly elliptic for $p\in B_{\frac{\delta}{2}}.$ Therefore by applying  Lemma \ref{smesti}, we get
\begin{equation}\label{sm69}
\left\{
\begin{aligned}{}
|K|&\geq c_{1}|B^{+}_{r_{1}}\cap\{y_{n}>\eta\}|-\frac{\|f\|^{n}_{L^{\infty}(B^{+}_{1})}}{a^{n}}|K|\\
&\geq c_{1}|B^{+}_{r_{1}}\cap\{y_{n}>\eta\}|-|K|\frac{\epsilon^{n}}{\mathfrak{N}^{n}\nu^{n}}.
\end{aligned}
\right.
\end{equation}
At this point by using \eqref{small} we obtain the following estimate
\begin{equation}\label{sm70}
|K|\geq \frac{c_{1}}{2}|B^{+}_{r_{1}}\cap\{y_{n}>\eta\}|.
\end{equation}
Now we note that because of  \eqref{sm62}, at any contact point $x\in K,$ we have $\tilde v\geq\frac{\delta'}{4}$  and therefore $K\subset E.$ Consequently, we can assert that \eqref{sm59} holds. This completes the proof of Step 3.

\medskip

\emph{Step 4:} (\emph{Conclusion}.)

\medskip
Let $k_{0}\in \N$ be the largest integer  such that $\tilde{C}^{k_{0}+1}a\leq \frac{\delta'}{4}.$ Now  since $\delta'\leq \delta,$ so by using  the estimate \eqref{s71} in Step 2  we have
\begin{equation}\label{sm72}
|B^{+}_{r_{0}}\setminus \tilde{A}_{a\tilde{C}^{k_{0}}}|\leq (1-\tilde{\sigma})^{k_{0}}|B^{+}_{r_{0}}|.
\end{equation}

Now for  $x\in B^{+}_{1},$  we make the crucial observation that the  following inclusion holds:
\begin{equation}\label{sm71}
\left\{
\begin{aligned}{}
E&=\{x\in B^{+}_{r_{0}}~~| \tilde v(x)>\frac{\delta'}{4}\}\\
&\subset\{x\in B^{+}_{r_{0}}~~|~v(x)>\frac{\delta'}{4}\}\hspace{0.5cm}\big(\text{since}~ v\geq \tilde v\big)\\
&\subset\{x\in B^{+}_{r_{0}}~~|~v(x)>a\tilde{C}^{k_{0}}\}~\hspace{0.5cm}~\big(\text{since}~a\tilde{C}^{k_{0}}<\frac{\delta'}{4}\big)\\
&\subset B^{+}_{r_{0}}\setminus \tilde{A}_{a\tilde{C}^{k_{0}}}~\hspace{0.5cm}\big(\text{by~definition~of}~\tilde{A}_{a\tilde{C}^{k_{0}}}\big).
\end{aligned}
\right.
\end{equation}
Using \eqref{sm59},\eqref{sm72} and \eqref{sm71}, we have
\begin{equation}
\frac{c_1}{2}|B^{+}_{r_{1}}\cap\{y_{n}>\eta\}|\leq|E|\leq(1-\tilde{\sigma})^{k_{0}}|B^{+}_{r_{0}}|.
\end{equation}
Now letting $\eta \to 0,$ we obtain
\begin{equation}\label{v0}
\frac{c_1}{2}|B^{+}_{r_{1}}|\leq|E|\leq(1-\tilde{\sigma})^{k_{0}}|B^{+}_{r_{0}}|.
\end{equation}

Now note that    using $a= \mathfrak{N} \nu \delta'$, we have that 
\begin{equation}\label{k91}
k_{0}\sim |\log_{\tilde{C}}(\mathfrak{N}\nu)|.
\end{equation}  At this point we  first let   $\mathfrak{N}$ large enough so that all previous arguments apply.  Subsequently if   $\nu$ is chosen   small  enough,  then  thanks to \eqref{k91}, we have that   $k_0$ becomes too large    so that   \eqref{v0} is violated ( note that $r_1= \frac{r_0}{16}$). This leads to a contradiction.

 Note that we can accordingly  choose  $\epsilon$ sufficiently small such that \eqref{small} holds as well.

Therefore, we  finally  obtain that   for appropriately chosen $\mathfrak{N}, \nu,\ve$ as above,  either \eqref{sm46} or \eqref{sm47} fails. Suppose first that \eqref{sm46} fails. Then since $\|g\|_{L^{\infty}(B_1^0)}\leq\delta'\epsilon<\frac{\delta'\nu}{2}$ (by  our choice of $\epsilon$), therefore we have;
\[u(x)-m\geq\nu\delta'~~~\text{for~all}~~x\in B^{+}_{r_{1}},\]
where we also use the fact that  $r_1 < r_0/2$.
Consequently, \eqref{smosc} follows with $\rho=r_1$. Now, suppose  instead that  \eqref{sm47} fails.  Then  in this case we have that
\[\sup_{B^{+}_{r_{1}}}u-\|g\|_{L^{\infty}(B_1^0)}-m\leq\frac{\delta'}{2},\]
that is,
\[\sup_{B^{+}_{r_{1}}}u-m\leq\frac{2\delta'}{3},\]
since~$\|g\|_{L^{\infty}(B_1^0)})<\frac{\nu\delta'}{2}$~and~$\nu<1/3.$ Thus, \eqref{smosc} again follows in view of the fact that  $\frac{2}{3}<(1-\nu).$ This finishes the proof of the theorem.
\end{proof}

As a consequence of Theorem \ref{th1},  we also have the following rescaled boundary oscillation estimate whose proof is identical to that of Theorem 2.1 in \cite{CF}.

\begin{thm}\label{rescaled}
With $\tilde F,u,f,g$ as in Theorem \ref{th1}, we have that there exists universal $\nu, \kappa, \epsilon, \rho \in (0,1)$ such that if $\delta'>0$ and $k \in \mathbb{N}$ satisfy
\begin{equation}
\begin{cases}
\text{osc}_{B_1^{+}} u \leq \delta' \leq \rho^{k} \kappa \delta,
\\
||f||_{L^{\infty}(B_1^+)} \leq \ve \delta',\\
\ ||g||_{L^{\infty}(B_1^0)} \leq \ve \delta',
\end{cases}
\end{equation}
then
\begin{equation}\label{rs1}
\text{osc}_{B_{\rho^s}^+} u \leq (1-\nu)^s  \delta',\ \text{for $s=0, ..., k+1$}.
\end{equation}

\end{thm}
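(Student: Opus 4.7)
The plan is to prove the estimate by induction on $s$, iterating Theorem \ref{th1} at the dyadic scales $\rho^s$. The base case $s=0$ is immediate from the hypothesis $\text{osc}_{B_1^+} u \leq \delta'$. For the inductive step, suppose the conclusion holds up to some $s \leq k$, so that $\text{osc}_{B_{\rho^s}^+} u \leq (1-\nu)^s \delta' =: \delta_s'$. I would then rescale by defining $v(y) := u(\rho^s y)$ on $B_1^+$. A direct computation shows that $v$ solves
\begin{equation*}
\tilde{F}_s(D^2 v, Dv, y) = f_s(y)\ \text{in}\ B_1^+, \qquad v_{y_n} = g_s(y)\ \text{on}\ B_1^0,
\end{equation*}
where $\tilde F_s(M, p, y) := \rho^{2s} \tilde F(\rho^{-2s} M, \rho^{-s} p, \rho^s y)$, $f_s(y) := \rho^{2s} f(\rho^s y)$, and $g_s(y) := \rho^s g(\rho^s y)$.

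Next, I would verify that $\tilde F_s$ still satisfies the structural conditions SC1)--SC3) with the same ellipticity constants $(\lambda, \Lambda)$, but with the small-gradient radius rescaled from $\delta$ to $\rho^s \delta$: indeed, if $|p| \leq \rho^s \delta$ then $|\rho^{-s} p| \leq \delta$, so SC3) for $\tilde F$ yields the two-sided bound for the increments of $\tilde F_s$, while SC1) and SC2) are preserved by the rescaling. To apply Theorem \ref{th1} to $v$ with $\delta$ replaced by $\rho^s \delta$ and $\delta'$ replaced by $\delta_s'$, the hypotheses to check are: $\|f_s\|_{L^\infty(B_1^+)} \leq \rho^{2s} \varepsilon \delta' \leq \varepsilon \delta_s'$, which holds provided $\rho$ is chosen so that $\rho^2 \leq 1-\nu$; the analogous bound $\|g_s\|_{L^\infty(B_1^0)} \leq \rho^s \varepsilon \delta' \leq \varepsilon \delta_s'$, which requires only $\rho \leq 1-\nu$; and the smallness condition $\delta_s' \leq \theta (\rho^s \delta)$, which follows from the hypothesis $\delta' \leq \rho^k \kappa \delta$ and $s \leq k$ once one takes $\kappa \leq \theta$, since then $(1-\nu)^s \delta' \leq \rho^k \kappa \delta \leq \rho^s \theta \delta$. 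The oscillation bound $\text{osc}_{B_1^+} v = \text{osc}_{B_{\rho^s}^+} u \leq \delta_s'$ is given by the inductive hypothesis.

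With all hypotheses of Theorem \ref{th1} verified for $v$, I would conclude $\text{osc}_{B_\rho^+} v \leq (1-\nu)\delta_s'$, and unwinding the rescaling gives $\text{osc}_{B_{\rho^{s+1}}^+} u \leq (1-\nu)^{s+1} \delta'$, which closes the induction and yields \eqref{rs1} for $s = 0, \ldots, k+1$. The main delicacy of the argument is not analytic but rather the bookkeeping of constants: one must first fix $\nu, \varepsilon, \rho, \theta$ from Theorem \ref{th1}, possibly shrink $\rho$ further so that $\rho^2 \leq 1-\nu$, and then choose $\kappa \leq \theta$ so that after every rescaling step the new data $(f_s, g_s, \delta_s')$ simultaneously meet the hypotheses of Theorem \ref{th1} and respect the shrunken small-gradient radius $\rho^s \delta$ of the rescaled operator $\tilde F_s$. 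The smallness requirement $\delta' \leq \rho^k \kappa \delta$ is precisely engineered so that this double compatibility survives all $k+1$ iterations.
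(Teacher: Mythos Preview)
Your proof is correct and follows exactly the induction-by-rescaling argument of Theorem 2.1 in \cite{CF}, to which the paper defers; the key observation that the rescaled operator $\tilde F_s$ satisfies SC1)--SC3) with ellipticity radius $\rho^s\delta$, together with the choice $\kappa\le\theta$, is precisely what makes the iteration close. One minor bookkeeping slip: in your final summary you write ``shrink $\rho$ so that $\rho^2\le 1-\nu$'', but the binding constraint is the stronger one $\rho\le 1-\nu$ coming from the Neumann data (which you correctly identified earlier), and this automatically gives $\rho^2\le 1-\nu$.
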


\section{Improvement of flatness and the proof of our main result}\label{end}
We now establish our main result Theorem \ref{main} using the non perturbative H\"older estimates proved in Sections \ref{sma} and \ref{lga}.  We first show how to reduce the considerations to flat boundary conditions.

\medskip

\subsection{Reduction to flat boundary conditions:}\label{flt1}
Since $\Omega \in C^{2}$,  we can flatten the boundary using coordinates which employs the distance function to the boundary $\partial \Omega$. See for instance Lemma 14.16 in \cite{GT} or the Appendix in \cite{BL}.  We crucially note that such coordinates  preserve the Neumann boundary conditions unlike standard flattening which changes Neumann conditions to oblique derivative conditions in general. Consequently, without loss of generality, we may  consider the following flat boundary value problem
\begin{equation}\label{deg1}
\begin{cases}
\langle A(x)Du, Du\rangle^{\beta/2} F(D^2u, Du, x) = f \ \text{in $B_1^{+}$},
\\
u_{x_n}=g\ \text{on $B_1^0$},
\end{cases}
\end{equation}
where $A$ is a uniformly elliptic positive definite matrix with Lipschitz coefficients.   Moreover  such a transformation ensures that the resulting $F$ is uniformly elliptic in $D^2u$ and Lipschitz in $Du$.  Without loss of generality, we will also assume that $\beta>0$ since the case $\beta=0$ is classical.

\medskip

\subsection{Improvement of flatness}

We  first state and prove a  compactness result for a perturbed variant of \eqref{deg1}. This  can be regarded as the boundary analogue of Lemma 4.2 in \cite{CF}.

\begin{lem}\label{comp1}
Let $u$  be such that   $|u| \leq 1$  and is   a viscosity solution to the following Neumann problem,
\begin{equation}\label{deg2}
\begin{cases}
\langle A(x)(Du+p), (Du+p)\rangle^{\beta/2} F(D^2u, Du, x) = f \ \text{in $B_1^{+}$},
\\
u_{x_n}=g\ \text{on $B_1^0$},
\end{cases}
\end{equation}
where $p \in \R^n$, $A$ is Lipschitz and uniformly elliptic and $F$ is uniformly elliptic in $M$ with ellipticity bounds $\lambda$ and $\Lambda$, Lipschitz in the gradient variable $q$ and continuous in $x$ with a modulus of continuity $\omega$. Also suppose $|F(0,0,0)| \leq 1$.  Furthermore, assume that $f \in C(\overline{B_1^+}),~~||f||_{L^{\infty}(B_1^+)} \leq 1$ and $g \in C^{\alpha_0}(B_1^0)$ with $||g||_{C^{\alpha_0}} \leq 1$.
Then given $\ve'>0$, there exists $L=L(\ve')>0$, such that if $|p| > L$, $|D_q F| \leq \frac{1}{L},$  then there exists $v \in C^{1, \alpha'}(\overline{B_{1/2}^+})$ for some $\alpha'$  universal (with  a universal $C^{1,\alpha'}$ estimate) 
such that
\begin{equation}\label{close}
||u-v||_{L^{\infty}(B_{1/2}^+)} \leq \ve'.
\end{equation}
\end{lem}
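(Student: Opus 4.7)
The plan is to argue by contradiction in the standard compactness-uniqueness fashion. Assume the lemma fails: there exist $\ve'>0$ together with sequences $\{u_k\}$, $\{p_k\}$, $\{F_k\}$, $\{f_k\}$, $\{g_k\}$, $\{A_k\}$ verifying the hypotheses with $|p_k|\to\infty$ and $|D_q F_k|\leq 1/k$, but such that no $C^{1,\alpha'}$ function with a universal $C^{1,\alpha'}$-estimate is within $\ve'$ of $u_k$ in $L^\infty(B_{1/2}^+)$.

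The first step is to establish a H\"older estimate for $\{u_k\}$ on $\overline{B^+_{3/4}}$ that is uniform in $k$. Writing the equation as $\bar F_k(D^2 u_k, Du_k, x) = f_k$ with $\bar F_k = h_k F_k$ and $h_k(q,x) = \langle A_k(x)(q+p_k),(q+p_k)\rangle^{\beta/2}$, one observes that for $|p_k|$ large the prefactor $h_k$ is comparable to $|p_k|^\beta$ on any bounded set of gradients. After the normalization $\tilde F_k := (\bar F_k - \bar F_k(0,q,x))/|p_k|^\beta$, the resulting operator satisfies SC1)--SC3) of Section \ref{lga} with ellipticity bounds comparable to $\lambda,\Lambda$ and threshold $\delta_k\to\infty$, while the new right-hand side tends to zero in $L^\infty$. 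The rescaled oscillation decay estimate Theorem \ref{rescaled} can then be applied---after a preliminary rescaling of $u_k$ and subtraction of a smooth extension of $g_k$ from $\overline{B^+_1}$ to bring both the oscillation and the boundary datum into the required smallness regime---and iterated to produce the desired uniform H\"older bound.

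With this bound in place, Arzel\`a-Ascoli (applied also to $\{A_k\}$ via Lipschitz continuity, to $\{g_k\}$ via the $C^{\alpha_0}$ bound, and to $\{F_k(\cdot,0,\cdot)\}$ using compactness of uniformly elliptic operators) supplies a subsequence along which $u_k\to v$ uniformly on $\overline{B_{1/2}^+}$, $A_k\to A_\infty$, $g_k\to g_\infty$, and $F_k\to F_\infty$ locally uniformly. Since $|D_q F_k|\leq 1/k\to 0$, the limit $F_\infty$ is independent of $q$; together with the vanishing of the right-hand side in the rescaled equation, the standard stability theorem for viscosity solutions of Neumann problems yields that $v$ is a viscosity solution to
\[
\begin{cases}
F_\infty(D^2 v, x) = 0 & \text{in } B_{1/2}^+, \\
v_{x_n} = g_\infty & \text{on } B_{1/2}^0,
\end{cases}
\]
with $F_\infty$ uniformly elliptic with ellipticity bounds $\lambda,\Lambda$, $\omega$-continuous in $x$, and $F_\infty(0,x)=0$.

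The limit problem is a uniformly elliptic fully nonlinear Neumann problem with $C^{\alpha_0}$ boundary datum, so the classical boundary $C^{1,\alpha'}$ regularity theory (see, e.g., \cite{MS}, \cite{LZ}) gives $v\in C^{1,\alpha'}(\overline{B_{1/2}^+})$ with a universal estimate for some universal $\alpha'>0$. For $k$ sufficiently large one then has $\|u_k-v\|_{L^\infty(B_{1/2}^+)}<\ve'$, contradicting the choice of the sequence. The main obstacle in executing this plan is the first step: verifying that Theorem \ref{rescaled} can be invoked with constants uniform in $k$. The delicate point is the simultaneous smallness required of the oscillation of $u_k$, the right-hand side, and the boundary datum, which forces a careful normalization dictated by the scaling in Theorem \ref{rescaled} and must be arranged so that the structural constants of $\tilde F_k$ stay uniform as $|p_k|\to\infty$.
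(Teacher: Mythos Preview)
Your overall architecture is exactly that of the paper: argue by contradiction, obtain uniform equicontinuity of $\{u_k\}$ by invoking Theorem \ref{rescaled}, pass to the limit via Arzel\`a--Ascoli and stability, and identify the limit as a solution of a uniformly elliptic Neumann problem to which the Milakis--Silvestre $C^{1,\alpha'}$ estimate applies. The limiting problem you write down and the source of the contradiction match the paper's.

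The gap is in your normalization. After dividing by $|p_k|^\beta$ and subtracting $\bar F_k(0,q,x)$, the right-hand side becomes
\[
\frac{f_k(x)}{|p_k|^\beta}\;-\;\frac{h_k(q,x)}{|p_k|^\beta}\,F_k(0,q,x),
\]
and the second term does \emph{not} tend to zero: in the ellipticity region $|q|\lesssim |p_k|$ one has $h_k/|p_k|^\beta$ bounded but $|F_k(0,q,x)|\leq |F_k(0,0,x)|+|D_qF_k|\,|q|$, and the contribution $|F_k(0,0,x)|=O(1)$ (only $|F_k(0,0,0)|\leq 1$ is assumed) persists. Hence your assertion that ``the new right-hand side tends to zero in $L^\infty$'' fails, and Theorem \ref{rescaled} cannot be invoked with a fixed $\delta'$. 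Your final paragraph correctly flags this as the delicate point, but the normalization you actually propose does not resolve it.

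The paper's remedy is a different rescaling: set $\theta_k=\max\{\,1/|p_k|,\ \|D_qF_k\|\,\}\to 0$ and work with $w_k=\theta_k\bigl(u_k-u_k(0)\bigr)$. This has two effects. First, the ellipticity threshold becomes the \emph{fixed} value $\delta=1/2$ (rather than $\delta_k\to\infty$), because $|Dw_k|\leq 1/2$ forces both $|Du_k|/|p_k|\leq 1/2$ and $|D_qF_k|\,|Du_k|\leq 1/2$. Second, the oscillation of $w_k$, the Neumann datum $\theta_k g_k$, and the full right-hand side (including the subtracted $F_k(0,\cdot,\cdot)$ term, now multiplied by $\theta_k$) are all $O(\theta_k)$. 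One then takes $\delta'\sim\theta_k/\epsilon$ in Theorem \ref{rescaled}; since $\theta_k\to 0$, the number $m_k$ of admissible scales tends to infinity, yielding a H\"older bound for $u_k$ valid at all scales $\geq\rho^{m_k}\to 0$, which suffices for equicontinuity. No separate subtraction of an extension of $g_k$ is needed.
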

\begin{proof}
We first note that the equation  \eqref{deg2} can be rewritten as

\begin{equation}\label{deg4}
\begin{cases}
\langle A(x)(\frac{Du}{|p|}+e),\frac{Du}{|p|} + e \rangle^{\beta/2} F(D^2u, Du, x) = \frac{f}{|p|^{\beta}} \ \text{in $B_1^{+}$},
\\
u_{x_n}=g\ \text{on $B_1^0$},
\end{cases}
\end{equation}
where $e=\frac{p}{|p|}$. Therefore, we see that $u$ satisfies a uniformly elliptic PDE when $|Du| \leq \frac{|p|}{2}$.  Suppose on the contrary, the assertion is not true. Then there exist an $\ve_0>0$ and a sequence of  $u_k's, p_k's, F_k's, f_k's, g_k's$ with $|u_k| \leq 1, |f_k|\leq 1, ||g_k||_{C^{\alpha_0}} \leq 1,  |D_q F_k| \leq \frac{1}{k},  |p_k| > k$, such that  $F_k's$ have the same ellipticity bounds $\lambda, \Lambda$, are equicontinuous in $x$ with modulus $\omega$ and $u_k$ solves the following problem:
\begin{equation}\label{deg50}
\begin{cases}
\langle A(x)(\frac{Du_k}{|p_k|}+e_k),\frac{Du_k}{|p_k|} + e_k \rangle^{\beta/2} F_k(D^2u_k, Du_k, x) = \frac{f_k}{|p_k|^{\beta}} \ \text{in $B_1^{+}$},\ e_k=\frac{p_k}{|p_k|}
\\
(u_k)_{x_n}=g_k\ \text{on $B_1^0$}
\end{cases}
\end{equation}
and such that  $u_k$'s are not $\ve_0$ close to any $v\in C^{1,\alpha'}(\overline{B^{+}_{1/2}})$. We now rewrite the first equation in \eqref{deg50} as follows:
\begin{align}\label{deg13}
&\langle A(x)(\frac{Du_k}{|p_k|}+e_k),\frac{Du_k}{|p_k|} + e_k \rangle^{\beta/2} \Big( F_k(D^2u_k, Du_k, x) - F_k(0, Du_k, x)\Big)= \frac{f_k}{|p_k|^{\beta}}
\\
& -\langle A(x)(\frac{Du_k}{|p_k|}+e_k),\frac{Du_k}{|p_k|} + e_k \rangle^{\beta/2}F_k(0, Du_k, x) \ \text{in $B_1^{+}$}.
\notag
\end{align}
Now, notice that the operators in \eqref{deg13} above satisfy the structural assumptions SC1)- SC3) as in Section \ref{lga} and are uniformly elliptic for $|Du_k| \leq \frac{|p_k|}{2}$. Before proceeding further, we make the following important discursive remark.
\begin{rem}
Over here, the reader should note that the reason as to why we subtract off $F_k(0, Du_k, x)$ is to ensure that SC2) holds. Note that even if we start with $F$ satisfying SC2), after flattening such a condition is not  necessarily preserved. 
\end{rem}
Now similar to the proof of Lemma 4.2 in \cite{CF}, we look at the following rescaled functions
\[
w_k(x)= \theta_k (u_k(x)-u(0)),
\]
where
\begin{equation}\label{theta}
\theta_k=\max\big\{ \frac{1}{|p_k|}, ||D_q F_k||\big\}.
\end{equation}
Then, it follows that $w_k$ solves:
\begin{align}\label{deg14}
&\langle A(x)(\frac{Dw_k}{\theta_k |p_k|}+e_k),\frac{Dw_k}{\theta_k |p_k|} + e_k \rangle^{\beta/2} \theta_k( F_k(D^2w_k/\theta_k, Dw_k/\theta_k, x) - F_k(0, Dw_k/\theta_k, x))= \theta_k\frac{f_k}{|p_k|^{\beta}}
\\
& -  \theta_k \langle A(x)(\frac{Dw_k}{\theta_k |p_k|}+e_k),\frac{Dw_k}{\theta_k |p_k|} + e_k \rangle^{\beta/2}F_k(0, Dw_k/\theta_k, x) \ \text{in $B_1^{+}$}.
\notag
\end{align}
Moreover, $w_k$ satisfies in the viscosity sense the Neumann condition $(w_k)_{x_n} = \theta_k g_k$. Also from \eqref{deg14} it follows that  $w_k$  solves a degenerate elliptic problem  which is uniformly elliptic independent of $k$ when $|Dw_k| \leq 1/2= \delta$. Now let $\rho, \kappa, \ve, \nu$ be as in  Theorem \ref{rescaled} corresponding to $\delta=\frac{1}{2}$. In the region of uniform ellipticity it is easily seen that the scalar term
\begin{align}\label{st5}
& \tilde f_k = \theta_k\frac{f_k}{|p_k|^{\beta}}
\\
& -  \theta_k \langle A(x)(\frac{Dw_k}{\theta_k |p_k|}+e_k),\frac{Dw_k}{\theta_k |p_k|} + e_k \rangle^{\beta/2}F_k(0, Dw_k/\theta_k, x)
\notag
\end{align}
 satisfies $|\tilde f_k| \leq C_0\theta_k$. This follows from the expression of $\theta_k$ as in \eqref{theta}. Likewise, we have that $|\theta_k g_k| \leq  \theta_k$.  We now let $\delta'= \frac{ C_0 \theta_k}{\ve}.$  For a given $k,$ let $m_k$ be the largest integer such that
\[
\frac{C_0 \theta_k}{\ve}  \leq \rho^{m_k} \kappa \delta.
\]
Note that $m_k \to \infty$ as $k \to \infty$. Then it follows from the estimate in Theorem \ref{rescaled} that
\[
\text{osc}_{B_{\rho^{s}}^+} w_k \leq  C (1-\nu)^{s} \theta_k,\ s=1, ..., m_k.
\]
Scaling back to $u_k$ we obtain
\[
|u_k(x) - u_k(0)| \leq C|x|^{\alpha}\ \text{as long as $|x| \geq \rho^{m_k}$},
\]
where $\alpha= - \text{log}_{\rho}(1-\nu)$. Likewise one has a similar H\"older estimate at every boundary point in $B_{3/4}^0$.  The interior version of such estimates  follows from \cite{CF}.
This is enough to show that $\{u_k\}$;s are equicontinuous upto $\{x_n=0\}$ and consequently Arzela-Ascoli can be applied. Therefore, there  exists a subsequence which we still denote by $\{u_k\}$  which converges in $\overline{B_{3/4}^{+}}$ to some $v_0$. By passing to another subsequence, we can also assume  that $ F_k \to F_0$ which has the same ellipticity bounds and is independent of $q$ (since $D_qF_k \to 0$), $e_k \to e_0$ with $|e_0|=1$ and also $g_k \to g_0$ in $C^{\alpha_0}$. In a standard way, one can show that since $\frac{f_k}{|p_k|^{\beta}} \to 0$,  therefore  $v_0$ is a viscosity solution to

\begin{equation}\label{deg5}
\begin{cases}
\langle A(x)e_0, e_0 \rangle^{\beta/2} F_0(D^2v_0, x) = 0~~~\text{in}~B_{3/4}^{+},
\\
(v_0)_{x_n}=g_0\ \text{on $B_{3/4}^0$}.
\end{cases}
\end{equation}
For relevant stability results, we refer to Proposition 2.1 in \cite{LZ}. Now since  $\langle A(x)e_0, e_0 \rangle^{\beta/2} \ >0$, therefore, we can conclude that $v_0$ is a solution to
\begin{equation}\label{deg100}
\begin{cases}
 F_0(D^2v_0, x) = 0\ \text{in $B_{3/4}^{+}$},
\\
(v_0)_{x_n}=g_0\ \text{on $B_{3/4}^0$}.
\end{cases}
\end{equation}
Now, from the regularity results in \cite{MS}, it follows that $v_0 \in C^{1, \alpha'}(\overline{B_{1/2}^+})$ for some $\alpha'>0$ with universal bounds 
which immediately leads to a  contradiction for large enough $k's$.

\end{proof}
Before we state and prove  the improvement of flatness result for the perturbed equations  as in Lemma \ref{comp1}, we  first introduce a few universal parameters. Let 
\[
F(M, q, x): S(n) \times \R^n \times \R^n \longrightarrow \R,
\]
such that $F$ is uniformly elliptic in $M$ with ellipticity constants $\lambda, \Lambda$, Lipschitz in $q$ with Lipschitz bound say $1$  and continuous in $x$ with some modulus of continuity $\omega$. Also assume that $|F(0,0,0)| \leq 1$. Let $\alpha', C>0$ be  universal constants such that the following estimate holds 
\begin{equation}\label{universal}
||w||_{C^{1, \alpha'}(\overline{B_{1/2}^+})} \leq C,
\end{equation}
for any $w$ which is a   viscosity solution to the following problem:
\begin{equation}\label{de1}
\begin{cases}
F(D^2w, Dw, x) =0~\text{in}~~B_{3/4}^+,\\
|w|\leq 1,\\
w_{x_n}= g\ \text{on} ~~B_{3/4}^0~~~~ \text{and~~~~$ ||g||_{C^{\alpha_0}} \leq 1$ for~some~fixed $\alpha_0>0.$}
\end{cases}
\end{equation}
The existence of such $\alpha', C$ follows from the regularity results in \cite{MS}. We also note that from \eqref{universal}, the following estimate can be deduced,

\begin{equation}\label{universal1}
|w(x) - \tilde L(x)| \leq C|x|^{1+\alpha'},
\end{equation}
where $\tilde L$ is the affine approximation of $w$ at $0$. We now state the relevant improvement of flatness result when $|p|$ is large.
\begin{lem}\label{flat1}
With $u, A, f, g, p, F$ as in Lemma \ref{comp1}, there exist  universal $\ve_0>0$, $r \in (0,1 )$ and $\alpha>0$ such that if $|p| >L(\ve_0)$, $|D_q F| \leq \frac{1}{L(\ve_0)}$, then there exists an affine function $\tilde L$ with universal bounds as in \eqref{universal} such that
\begin{equation}\label{ft1}
||u- \tilde L||_{L^{\infty}(B_r^+)} \leq r^{1+\alpha}.
\end{equation}
\end{lem}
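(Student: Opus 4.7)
The plan is to deduce Lemma~\ref{flat1} by combining the compactness/approximation result of Lemma~\ref{comp1} with the a priori $C^{1,\alpha'}$ estimate \eqref{universal} available for the limiting Neumann problem \eqref{de1}. The desired affine function $\tilde L$ will simply be the linear part of the Taylor expansion at the origin of the approximating solution $v$ supplied by Lemma~\ref{comp1}.

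More concretely, first fix any $\alpha \in (0, \alpha')$, where $\alpha'$ is the universal exponent appearing in \eqref{universal}--\eqref{universal1}. Next choose $r \in (0, 1/2)$ small enough so that
\begin{equation*}
C\, r^{1+\alpha'} \leq \tfrac{1}{2}\, r^{1+\alpha},
\end{equation*}
where $C$ is the universal constant from \eqref{universal1}; since $\alpha < \alpha'$ and $r<1$ this reduces to $C r^{\alpha'-\alpha} \le 1/2$, which holds for $r$ small. Finally set $\ve_0 := \tfrac{1}{2} r^{1+\alpha}$, and let $L(\ve_0)$ be the constant provided by Lemma~\ref{comp1}.

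With these choices, assume $|p| > L(\ve_0)$ and $|D_q F| \le 1/L(\ve_0)$. Lemma~\ref{comp1} then yields a function $v \in C^{1,\alpha'}(\overline{B_{1/2}^+})$ with universal $C^{1,\alpha'}$ bound (i.e.\ obeying \eqref{universal}) and satisfying $\|u - v\|_{L^\infty(B_{1/2}^+)} \le \ve_0$. Define the affine function
\begin{equation*}
\tilde L(x) := v(0) + Dv(0)\cdot x,
\end{equation*}
whose coefficients are universally bounded by \eqref{universal}. Applying \eqref{universal1} to $v$ and the triangle inequality, for every $x \in B_r^+$ we obtain
\begin{equation*}
|u(x) - \tilde L(x)| \;\le\; |u(x)-v(x)| + |v(x)-\tilde L(x)| \;\le\; \ve_0 + C r^{1+\alpha'} \;\le\; \tfrac{1}{2} r^{1+\alpha} + \tfrac{1}{2} r^{1+\alpha} \;=\; r^{1+\alpha},
\end{equation*}
which is exactly \eqref{ft1}.

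In this particular step there is no real obstacle once Lemma~\ref{comp1} has been secured: the argument is the by-now-standard improvement-of-flatness procedure à la Caffarelli, with the perturbation $|p| \to \infty$ playing the role of a smallness parameter that makes the equation asymptotically uniformly elliptic. The only points that require mild care are (i) choosing $\alpha$ strictly below the universal exponent $\alpha'$ from \eqref{universal} so that the $r^{1+\alpha'}$ error absorbs into $r^{1+\alpha}$ upon taking $r$ small, and (ii) fixing the sequence of constants in the correct order ($\alpha$ first, then $r$, then $\ve_0$, and finally the threshold $L(\ve_0)$ from Lemma~\ref{comp1}). The genuine difficulty of the program has already been absorbed into Lemma~\ref{comp1} itself, which in turn relies on the boundary oscillation decay Theorem~\ref{rescaled} of Section~\ref{lga}.
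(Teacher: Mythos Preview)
Your proof is correct and follows essentially the same route as the paper: apply Lemma~\ref{comp1} to get a $C^{1,\alpha'}$ approximant $v$, take $\tilde L$ to be the first-order Taylor polynomial of $v$ at the origin, and close via the triangle inequality after choosing $\alpha<\alpha'$, then $r$, then $\ve_0=\tfrac12 r^{1+\alpha}$. The only cosmetic difference is that the paper additionally imposes $\alpha<\min\{\alpha_0,\alpha',\tfrac{1}{1+\beta}\}$ at this stage, but as the authors themselves remark, that extra restriction is only needed for the subsequent iteration, not for the present lemma.
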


\begin{proof}
From Lemma \ref{comp1}, we have that  given $\ve'>0$, there exists $L(\ve')>0$ such that if $|p|>L(\ve'), |D_q F| \leq \frac{1}{L(\ve')}$, then there exists $v$ which is a solution to an equation of the type \eqref{de1} such that
\begin{equation}\label{l1}
||u-v||_{L^{\infty}(B_{1/2}^+)} \leq \ve'.
\end{equation}
Now from \eqref{universal1}, we have
\begin{equation}
|v(x) - \tilde L(x)| \leq C|x|^{1+\alpha'}.
\end{equation}
where $\tilde L$ is the affine approximation of $v$ at $0$.  We first choose 
\begin{equation}\label{choice}
\alpha< \min\big\{ \alpha_0, \alpha', \frac{1}{1+\beta}\}.
\end{equation}

 Subsequently  we  choose $r$ small enough such that
\begin{equation}\label{l2}
Cr^{1+\alpha'}\leq \frac{r^{\alpha+1}}{2},
\end{equation}
where $C, \alpha'$ are as in \eqref{universal1}. Finally we let  $\ve_0=\ve'= \frac{r^{\alpha+1}}{2}$. Therefore, the desired estimate in \eqref{ft1} follows from \eqref{l1}-\eqref{l2} by an application of  triangle inequality provided $|p| > L(\ve_0)$ and $|D_q F| \leq \frac{1}{L(\ve_0)}.$
\end{proof}
Before, proceeding further, we make the following important remark.
\begin{rem}
We  note that although in the proof of Lemma \ref{flat1}, one only needs to take $\alpha < \alpha',$ however for subsequent iterative arguments which involves rescaling, we have to additionally ensure that $\alpha<\text{min}\big\{\alpha_0, \frac{1}{1+\beta}\big\}.$
\end{rem}
We now have the analogous improvement of flatness result when $|p| \leq L(\ve_0).$
\begin{lem}\label{flat2}
Let $u$  such that $|u| \leq 1$  be a viscosity solution to \eqref{deg2}  where  $|p|\leq L(\ve_0)$ with $\ve_0$  as in Lemma \ref{flat1}.  Then there exists $\eta>0$ such that if
$||f||_{L^{\infty}}, ||g||_{C^{\alpha_0}} \leq \eta,$ then there exists an affine function $\tilde L= \tilde a +  <\tilde b , x>$ ( $, \tilde a \in \mathbb{R}, \tilde b \in \mathbb{R}^n$) with universal bounds such that
\begin{equation}\label{ft2}
||u- \tilde L||_{L^{\infty}(B_r^+)} \leq r^{1+\alpha},
\end{equation}
where $r, \alpha$ are as in Lemma \ref{flat1}. Moreover  we also additionally have that
\begin{equation}\label{en0}
<\tilde b, e_n>=0
\end{equation}
\end{lem}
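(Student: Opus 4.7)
The plan is to argue by contradiction via compactness, in perfect analogy with Lemma \ref{flat1} but playing the small-slope Hölder estimates of Section \ref{sma} against the large-slope ones used there. Suppose the claim fails: then for any $\eta_k \to 0$ there exist $u_k, p_k, F_k, f_k, g_k$ satisfying all the hypotheses with $|p_k|\le L(\ve_0)$ and $\|f_k\|_{L^\infty}, \|g_k\|_{C^{\alpha_0}}\le \eta_k$, yet no affine function $\tilde L_k=\tilde a_k+\langle \tilde b_k,x\rangle$ with $\langle \tilde b_k,e_n\rangle=0$ and universal bounds achieves $\|u_k-\tilde L_k\|_{L^\infty(B_r^+)}\le r^{1+\alpha}$.

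First I would rewrite \eqref{deg2} in the form
\[
F_k(D^2u_k, Du_k, x)=\frac{f_k}{\langle A(x)(Du_k+p_k),Du_k+p_k\rangle^{\beta/2}},
\]
whose right-hand side is uniformly bounded as soon as $|Du_k|>2L(\ve_0)+1$ (so that $|Du_k+p_k|$ is bounded below). Hence $u_k$ solves an equation uniformly elliptic when the gradient is large, with bounded source and $C^{\alpha_0}$ Neumann data. Theorem \ref{holder1} then gives a uniform $C^{\alpha''}$ bound for $u_k$ on $\overline{B_{3/4}^+}$. By Arzelà–Ascoli we extract a subsequence with $u_k\to v$ uniformly on $\overline{B_{3/4}^+}$, $p_k\to p_0$ with $|p_0|\le L(\ve_0)$, and $F_k\to F_0$ locally uniformly with the same ellipticity structure. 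Standard stability of viscosity solutions (Proposition 2.1 in \cite{LZ}) then shows that $v$ is a viscosity solution of the homogeneous problem
\begin{equation}\label{eq:limhom}
\langle A(x)(Dv+p_0),Dv+p_0\rangle^{\beta/2}F_0(D^2v,Dv,x)=0 \text{ in } B_{3/4}^+, \qquad v_{x_n}=0 \text{ on } B_{3/4}^0.
\end{equation}

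The crux is to show $v\in C^{1,\alpha'}$ at $0$ with a universal estimate. I expect this to be the main obstacle, since it is the boundary analogue for the degenerate fully nonlinear Neumann problem of the interior $C^{1,\alpha}$ result of \cite{IS1}. The strategy I would pursue is the following: interior $C^{1,\alpha}$ regularity of $v$ in $B_{3/4}^+$ is immediate from \cite{IS1}, while at the boundary, since the right-hand side and the Neumann data both vanish, one combines the boundary Hölder estimate of Theorem \ref{holder1} with an iteration on dyadic half-balls, using the homogeneity of the limit equation and the vanishing Neumann condition $v_{x_n}=0$ to obtain a universal $C^{1,\alpha'}$ bound at $0$ (alternatively, in the symmetric-operator case an even reflection across $\{x_n=0\}$ reduces the problem to an interior one amenable to \cite{IS1}).

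Assuming this, set $\tilde L(x)=v(0)+\langle Dv(0),x\rangle$. The Neumann condition forces $\langle Dv(0),e_n\rangle=0$, giving \eqref{en0}, and the $C^{1,\alpha'}$ estimate yields $|v(x)-\tilde L(x)|\le C|x|^{1+\alpha'}$ for a universal $C$. With $r,\alpha$ from Lemma \ref{flat1} and the choice $\alpha<\alpha'$ recorded in \eqref{choice}, we may shrink $r$ (universally, not depending on $k$) so that $Cr^{1+\alpha'}\le \tfrac12 r^{1+\alpha}$. Uniform convergence $\|u_k-v\|_{L^\infty(B_r^+)}\to 0$ and the triangle inequality then produce $\|u_k-\tilde L\|_{L^\infty(B_r^+)}\le r^{1+\alpha}$ for all large $k$, contradicting the failure of the conclusion and completing the proof.
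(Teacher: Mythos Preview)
Your compactness setup is essentially identical to the paper's, and the reduction to a limit function $v$ satisfying \eqref{eq:limhom} is correct. The gap is at what you yourself flag as ``the crux'': obtaining a universal $C^{1,\alpha'}$ estimate for $v$ at the boundary. Your two proposed routes do not work. The ``iteration on dyadic half-balls, using the homogeneity of the limit equation and the vanishing Neumann condition'' is precisely the boundary $C^{1,\alpha}$ regularity for the degenerate Neumann problem that Theorem \ref{main} is set up to prove; invoking it here is circular. The reflection argument only applies when $F$ has the right symmetry in the $e_n$ direction, which is not assumed.

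The paper sidesteps this entirely with one extra observation you are missing: because the limit equation has \emph{zero} right-hand side, the degenerate prefactor can be dropped. Arguing as in Lemma 6 of \cite{IS1}, any viscosity solution of
\[
\langle A(x)(Dv+p_0),Dv+p_0\rangle^{\beta/2}\,F(D^2v,Dv,x)=0
\]
is in fact a viscosity solution of the uniformly elliptic equation $F(D^2v,Dv,x)=0$. Once that is established, the limit $v$ solves a problem of the form \eqref{de1}, and the universal $C^{1,\alpha'}$ estimate \eqref{universal}--\eqref{universal1} from \cite{MS} applies directly. The affine approximation $\tilde L$ is then taken to be that of $v$ at $0$; the homogeneous Neumann condition $(v)_{x_n}=0$ gives $\langle \tilde b,e_n\rangle=0$, and the rest of your closing paragraph goes through verbatim. (A minor side remark: in this lemma $F$ is fixed, so there is no need to pass to a subsequence $F_k\to F_0$.)
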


\begin{proof}
\emph{Step 1:}. We first show that given $\ve>0$, there exists $\eta=\eta(\ve)>0,$ such that if $||f||_{L^{\infty}}, ||g||_{C^{\alpha}} \leq \eta,$ then there exists a function $v$ which solves
\begin{equation}\label{de5}
\begin{cases}
F(D^2v, Dv, x)=0\ \text{in $B_{3/4}^+$,}
\\
v_{x_n}= 0\ \text{on $B_{3/4}^0,$}
\end{cases}
\end{equation}
and
\[
||v-u||_{L^{\infty}(B_{1/2}^+)} \leq \ve.
\]
If not, then there exists $\ve>0$  for which the assertion is violated  for a sequence $u_k, f_k, g_k, p_k$ such that $f_k, g_k \to 0$, $|p_k| \leq L(\ve_0)$ and where  $u_k$ solves the following problem
\begin{equation}\label{de7}
\begin{cases}
\langle A(x)(Du_k+p_k), Du_k + p_k \rangle^{\beta/2} F(D^2u_k, Du_k, x) = f_k \ \text{in $B_1^{+},$}
\\
(u_k)_{x_n}=g_k\ \text{on $B_1^0.$}
\end{cases}
\end{equation}
Now, since $|p_k| \leq L(\ve_0),$ we find that  the equation is uniformly elliptic when $|Du_k| > 2 L(\ve_0)$( say in the viscosity sense). We also note  that \eqref{de7} can be  rewritten as:
\begin{equation}\label{de8}
\begin{cases}
 F(D^2u_k, Du_k, x) = \frac{f_k}{ \langle A(x)(Du_k+p_k), Du_k + p_k \rangle^{\beta/2}} \ \text{in $B_1^{+},$}
\\
(u_k)_{x_n}=g_k\ \text{on $B_1^0,$}
\end{cases}
\end{equation}
where for  $|Du_k| > 2L(\ve_0)$, one has
\[
\frac{|f_k|}{ \langle A(x)(Du_k+p_k), Du_k + p_k \rangle^{\beta/2}} \leq \frac{|f_k|}{\langle A(x) L(\ve_0), L(\ve_0)\rangle^{\beta/2}} \to 0 \ \text{as $k \to \infty.$}
\]
Consequently, from the uniform boundary  H\"older estimates as in Theorem \ref{holder1}, we have that  upto  a subsequence, $u_k \to v_0$ in $\overline{B_{3/4}^+}$, $p_k \to p_0$ such that $v_0$ is a viscosity solution to
\begin{equation}\label{de10}
\begin{cases}
\langle A(x)(Dv_0+p_0), Dv_0 + p_0 \rangle^{\beta/2} F(D^2v_0, Dv_0, x) = 0 \ \text{in $B_{3/4}^{+},$}
\\
(v_0)_{x_n}=0\ \text{on $B_{3/4}^0$}.
\end{cases}
\end{equation}
Such a stability result follows from an argument as in Proposition 2.1 in \cite{LZ}. Now, by arguing  as in the proof of  Lemma 6  in \cite{IS1}, we can assert that $v_0$ in fact solves
\begin{equation}\label{de}
\begin{cases}
F(D^2v_0, Dv_0, x) = 0 \ \text{in $B_{3/4}^{+},$}
\\
(v_0)_{x_n}=0\ \text{on $B_{3/4}^0.$}
\end{cases}
\end{equation}
This leads to a contradiction for large $k's.$

\medskip

\emph{Step 2:} (Conclusion)

\medskip

Now, we take $\eta$ corresponding to $\ve=\ve_0,$ where $\ve_0$ is as in  Lemma \ref{flat1}. The rest of the arguments are the same as in Lemma \ref{flat1} because the universal estimate   in \eqref{universal1} also holds for $v_0$.  Also \eqref{en0} follows because $\tilde L$ corresponds to  the affine approximation of $v_0$ at $0$  which  satisfies homogeneous Neumann condition as in \eqref{de}. 
\end{proof}
Now, we let
\begin{equation}\label{eta}
\eta_0= \min\Big\{ \eta,  1/L(\ve_0)\Big\},
\end{equation}
where $\eta, \ve_0$ are as in Lemma \ref{flat2} and $L(\ve_0)$ is as  in Lemma \ref{flat1} corresponding to $\ve_0.$
Finally as a consequence of Lemma \ref{flat1} and Lemma \ref{flat2}, we obtain that  the following uniform improvement of flatness which doesn't take into account the size of $|p|$. 
\begin{lem}\label{flat4}
Let $u$  be such that   $|u| \leq 1$  and is   a viscosity solution to the following Neumann problem,
\begin{equation}\label{deg2}
\begin{cases}
\langle A(x)(Du+p), (Du+p)\rangle^{\beta/2} F(D^2u, Du, x) = f \ \text{in $B_1^{+}$},
\\
u_{x_n}=g\ \text{on $B_1^0$},
\end{cases}
\end{equation}
where $p \in \R^n$, $A$ is Lipschitz and uniformly elliptic and $F$ is uniformly elliptic in $M$ with ellipticity bounds $\lambda$ and $\Lambda$, Lipschitz in the gradient variable $q$ and continuous in $x$ with a modulus of continuity $\omega$. Also suppose $|F(0,0,0)| \leq 1$.  Then with $\eta_0$ as in \eqref{eta} above, we have that  if  $||f||_{L^{\infty}}, ||g||_{C^{\alpha_0}}, |D_q F| \leq \eta_0$, then there exists an affine function $\tilde L= \tilde a+ <\tilde b, x>$ with universal bounds such that
\begin{equation}\label{ft0}
\begin{cases}
||u- \tilde L||_{L^{\infty}(B_r^{+})} \leq r^{1+\alpha},\\
<\tilde b, e_n>=0
\end{cases}
\end{equation}
where   $r, \alpha \in (0,1)$ are universal constants. Furthermore, we can additionally ensure that $\alpha$ satisfies \eqref{choice}.
\end{lem}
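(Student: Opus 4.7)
The plan is to prove Lemma \ref{flat4} by dichotomizing on the magnitude of $|p|$: the definition $\eta_0 = \min\{\eta, 1/L(\ve_0)\}$ in \eqref{eta} is engineered precisely so that Lemma \ref{flat1} applies in the large-slope regime $|p| > L(\ve_0)$ and Lemma \ref{flat2} applies in the complementary regime $|p| \leq L(\ve_0)$, covering every admissible $p$. I would first fix the universal constants $r$ and $\alpha$ from those two lemmas, shrinking $\alpha$ further if necessary to comply with \eqref{choice}, i.e., $\alpha < \min\{\alpha_0, \alpha', 1/(1+\beta)\}$.

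In the regime $|p| \leq L(\ve_0)$, the hypotheses of Lemma \ref{flat2} are immediate because $\|f\|_{L^{\infty}}, \|g\|_{C^{\alpha_0}} \leq \eta_0 \leq \eta$, and that lemma directly delivers both the oscillation bound \eqref{ft2} and the orthogonality $\langle \tilde b, e_n \rangle = 0$ of \eqref{en0}.

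In the regime $|p| > L(\ve_0)$, the bound $|D_q F| \leq \eta_0 \leq 1/L(\ve_0)$ places us within the scope of Lemma \ref{flat1}, producing an affine approximation satisfying \eqref{ft1}. The subtle issue is that the $e_n$-component of its slope is not automatically zero: tracing the compactness step in Lemma \ref{comp1}, the limit $v$ satisfies $v_{x_n} = g_0$ rather than $v_{x_n} = 0$, so the affine approximation at $0$ carries an $e_n$-component equal to $g_0(0)$. To enforce orthogonality, I would first perform the preliminary substitution $\hat u := u - g(0)\,x_n$, which converts the Neumann data to $\hat g = g - g(0)$ with $\hat g(0) = 0$, shifts the slope to $\hat p := p + g(0)\,e_n$, and modifies the nonlinearity to $\hat F(M,q,x) := F(M, q + g(0) e_n, x)$. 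Since $|g(0)| \leq \eta_0$ is small, $|\hat p| > L(\ve_0)/2$ is still large, $\hat F$ inherits the same ellipticity and Lipschitz-in-$q$ constants, and $|\hat F(0,0,0)|$ remains universally bounded. Applying Lemma \ref{flat1} to $\hat u$ then yields an affine approximation whose limit solves a homogeneous Neumann problem at the origin, forcing its slope to be orthogonal to $e_n$; undoing the substitution furnishes the required $\tilde L$ for the original $u$.

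The main obstacle I anticipate lies in this first case: the preliminary subtraction introduces small perturbations to the structural data, and I would need to verify with care that the universal parameters $L(\ve_0), \eta_0, r, \alpha$ remain valid under this reduction, possibly further tightening $\eta_0$. Once this is settled, the two cases fit together seamlessly and the claim follows, with $\alpha$ chosen in the intersection of the admissible ranges from Lemmas \ref{flat1} and \ref{flat2} and of \eqref{choice}.
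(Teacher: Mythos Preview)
Your approach is correct and matches the paper's: the paper states Lemma \ref{flat4} simply ``as a consequence of Lemma \ref{flat1} and Lemma \ref{flat2}'' with no further argument, i.e., exactly the dichotomy on $|p|$ that you propose. You have in fact gone further than the paper by noticing and addressing a genuine subtlety the paper glosses over: Lemma \ref{flat1} as stated does not deliver the orthogonality $\langle \tilde b, e_n\rangle = 0$, since the limiting function in Lemma \ref{comp1} satisfies $(v_0)_{x_n}=g_0$ rather than a homogeneous Neumann condition. Your proposed fix of first subtracting $g(0)x_n$ is exactly the right idea; it is in fact what the paper does, but \emph{outside} the lemma, in Step 1 of the proof of Theorem \ref{main}, so that when Lemma \ref{flat4} is actually invoked one already has $g(0)=0$ and the orthogonality then follows because the limit $g_0$ inherits $g_0(0)=0$. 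Your version internalizes this reduction into the lemma itself, which makes the statement self-contained; the minor constant adjustments you flag (e.g.\ $|\hat p|>L(\ve_0)-\eta_0$) are easily absorbed by taking $\eta_0$ small relative to $L(\ve_0)$ from the outset.
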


With Lemma \ref{flat4} in hand, we now prove our main result  Theorem \ref{main}.
\subsection*{Proof of Theorem \ref{main}}

\begin{proof}
\emph{Step 1:} (Basic reductions) In view of our discussion in subsection \ref{flt1}, we may first assume that $\partial \Omega = \{x_n=0\}$ and $u$ solves an equation of the type \eqref{deg1}.  Then, by letting
\[
u_s(x)= \frac{u(sx)}{s},
\]
we have that $u_s$ solves
\begin{equation}
\begin{cases}
\langle A(sx) Du_s(x), Du_s(x)\rangle^{\beta/2} sF(\frac{D^2u_s}{s}, Du_s, sx)= sf(sx).
\\
(u_s)_{x_n}(x)= g(sx)\ \text{on $\{x_n=0\}$}.
\end{cases}
\end{equation}
Now, by choosing $s$ sufficiently small, we can ensure that the operator
\[
F_s(M, q, x)= sF\bigg(\frac{M}{s},q, sx \bigg)
\]
satisfies $|D_q F_s| \leq \eta_0$ and also that 

\begin{equation}\label{half}
|F_s(0,0,0)| \leq \frac{1}{2}.
\end{equation}
  Subsequently we let  $u_s$ as our new $u$ and $F_s$ as our new $F$ which now additionally satisfies  $|D_q F| \leq \eta_0.$ Then  by letting $v= u - g(0)x_n- u(0),$ we have that $v(0)=0$  and it  solves
\begin{equation}\label{red1}
\begin{cases}
\langle A(x)(Dv + g(0)e_n), Dv+ g(0)e_n)\rangle^{\beta/2} F(D^2v,  Dv + g(0)e_n, x)= f~~\text{in}~~B^{+}_1,
\\
v_{x_n}= g - g(0)\ \text{on $B_1^0$},
\end{cases}
\end{equation}
We now define 
\[\tilde v= \frac{v}{\kappa}\]
 where \[\kappa=  \bigg(1+ ||v||_{L^{\infty}} + (\frac{||f||_{L^{\infty}}}{2 \eta_0})^{\frac{1}{1+\beta}} + \frac{||g||_{C^{\alpha_0}}}{2\eta_0} \bigg)\]
with $\eta_0$  as in Lemma \ref{flat4}. Then  we observe  that $\tilde v$ solves
\begin{equation}\label{red2}
\begin{cases}
 \langle A(x) (D\tilde v +\frac{g(0)}{\kappa}e_n), D\tilde v +\frac{g(0)}{\kappa}e_n\rangle ^{\beta/2} \kappa^{-1} F(\kappa D^2 \tilde v, \kappa D\tilde v, x)= \kappa^{-(1+\beta)} f(x)=\tilde f\ \text{in $B_1^+$},
\\
(\tilde v)_{x_n}= \tilde g= \frac{g}{\kappa}\ \text{on $B_1^0.$}
\end{cases}
\end{equation}
Now since $\kappa>1,$ we find that the new operator in \eqref{red2} satisfies similar structural conditions as $F.$ Moreover, we additionally have that $\|\tilde v\| \leq 1,$ $||\tilde f||_{L^{\infty}} \leq \eta_0, ||\tilde g||_{C^{\alpha_0}} \leq \eta_0.$ Thus by letting $\tilde v$ as our new $v,$ $\tilde g$ as our new $g$ and so on, we may assume without loss of generality that $v$ satisfies an equation of the type \eqref{deg2} for some $p$  and such that $ |D_q F|, \|f\|_{L^{\infty}}, ||g||_{C^{\alpha_0}} \leq \eta_0$  where $\eta_0$.  Moreover, we also have that  for our new $g$, $g(0)=0$.

\medskip
\emph{Step 2:}
We now show that for all $r, \alpha$ as in Lemma \ref{flat4}, we have that for every $k=0, 1, 2...,$ there exists $L_k= \langle b_k, x\rangle $ such that
\begin{equation}\label{as1}
\begin{cases}
||v- L_k||_{L^{\infty}(B_{r^{k}}^+)} \leq r^{k(1+\alpha)},
\\
\langle b_k, e_n\rangle=0,
\\
|b_k - b_{k+1}| \leq Cr^{k\alpha}.
\end{cases}
\end{equation}
We prove the claim in \eqref{as1} by induction. For $k=1,$ it follows from Lemma \ref{flat4} in view of  our reductions as in Step 1. Also note that since $v(0)=0$, by keeping track of the arguments that leads to Lemma \ref{flat4}, we can additionally ensure that $L_1(0)=0$.  We now assume that the assertion in \eqref{as1} holds upto some $k$. For such a $k$, we let
\[
w= \frac{(v- L_k)(r^k x)}{r^{k(1+\alpha)}}.
\]
Then, we have that $|w| \leq 1$ in $B_1^+$ and it satisfies the following inequalities in the viscosity sense
\begin{equation}\label{red4}
\begin{cases}
\langle A(r^k x) (Dw + p_k), Dw + p_k\rangle^{\beta/2}  r^{k(1-\alpha)}F( r^{k(\alpha-1)} D^2 w, r^{k\alpha}Dw + b_k, r^k x)= f_k(x)\  \text{in $B_1^+$}
\\
(w)_{x_n}= g_k(x) \ \text{on $T_1,$}
\end{cases}
\end{equation}
where $p_k= r^{-k\alpha} p +r^{-k\alpha} b_k$, $f_k(x)= r^{k(1- \alpha(1+\beta))} f(r^k x)$ and $g_k(x)= r^{-k\alpha}  g(r^k x)$. Now, since $||g||_{C^{\alpha_0}} \leq \eta_0, g(0)=0$ and $\alpha_0 > \alpha$, therefore, one can deduce easily  that $||g_k||_{C^{\alpha_0}} \leq \eta_0$. Also since $\alpha < \frac{1}{1+\beta}$ and $||f||_{L^{\infty}} \leq \eta_0$, therefore we can infer  that $||f_k||_{L^{\infty}} \leq \eta_0$.  Moreover, it  also follows that the  operator $F_{r, k}$
 in \eqref{red4} defined as
\[
F_{r, k}(M, q, x)=r^{k(1-\alpha)} F(r^{k(\alpha-1)} M, r^{k\alpha} q + b_k, r^k x),
\]
has the same ellipticity bounds as $F.$ Moreover, $||D_q F_{r, k}||  \leq r^{k} ||D_q F|| \leq \eta_0$ since $r<1$. Also using \eqref{half} we have that
\begin{equation}
|F_{r,k}(0,0,0)| \leq r^{k(1-\alpha)} |F(0,0,0)| + \eta_0 r^{k(1-\alpha)} |b_k| \leq  \frac{1}{2} + C \eta_0 \leq 1
\end{equation}
provided $\eta_0$ is further  adjusted in the beginning. 

Therefore, we can again apply Lemma \ref{flat4}  to obain for some  $\tilde L(x)= <\tilde b, x>$ satisfying $<\tilde b,e_n>=0$ that the following inequality holds, 
\[
||w- <\tilde b, x>||_{L^{\infty}(B_r^+)}  \leq r^{1+\alpha}
\]
Over here, we crucially used the fact that since $w(0)=0$, therefore as for $k=1$, we also additionally obtain that $\tilde L(0) = \tilde a=0$  by applying Lemma \ref{flat4} in  this specific  situation. 
Scaling back to $v,$ we deduce that \eqref{as1} holds for $k+1$ with $L_{k+1}(x)= L_k (x) + r^{k(1+\alpha)} \tilde L(r^{-k} x).$  This verifies the induction step and finishes the proof of Step 2.

\medskip

\emph{Step 3} (Conclusion)

\medskip

 It follows from \eqref{as1} by a standard analysis  argument that $L_0= \text{lim}_{k \to \infty}\ L_k$ is the affine approximation of order $1+\alpha$ at $0$ for $v$ and consequently $L_0 + g(0) x_n$ is the $1+\alpha$ order affine approximation for $u$ at $0.$ Likewise we have an affine approximation of order $1+\alpha$ at all boundary points.  Now going back to the original domain $\Omega,$  we can assert that there exists an affine approximation for $u$ of order $1+ \alpha$ at all points of $\partial \Omega \cap B_1.$ At this point, by a standard argument as in  \cite{MS}, one can combine the boundary $C^{1+\alpha}$ estimate with the interior ones as in \cite{IS1} to conclude  that $u \in C^{1, \alpha}(\overline{\Omega \cap B_{1/2}})$. Over here we note that although the interior regularity  result in \cite{IS1} is stated for
\[
|Du|^{\beta} F(D^2u) =f
\]
nevertheless, the proof works exactly the same way for equations of the type

\[
|Du+L|^{\beta} F(D^2u, x)=f,
\]
when $F$ depends continuously on $x$.  This finishes the proof of the theorem.

\end{proof}

\begin{proof}[ Proof of Corollary \ref{main1}]

We first rewrite the boundary condition in Corollary \ref{main1} as follows
\begin{equation}
\begin{cases}
|Du|^{\beta} F(D^2u, x) = f, \ \text{in $\Omega \cap B_1(0)$, $0 \in \partial \Omega$, $\beta \geq 0$},
\\
u_{\nu} =\tilde g,\ \text{on $\partial \Omega \cap B_1(0)$},
\end{cases}
\end{equation}
where $\tilde g= g - h(x) u$. Then by flattening and  by  applying the H\"older regularity result Theorem \ref{holder1}, we obtain that $u$ is $C^{\alpha}$ upto the boundary. This in turn implies that  $\tilde g$  is H\"older continuous and consequently the conclusion follows from Theorem \ref{main}. 

\end{proof}

In closing, we make the following remark.

\begin{rem}
It seems plausible that the techniques in this paper can be modified to yield $C^{1, \alpha}$  regularity  results for Neumann boundary problems of the type
\begin{equation}\label{plap}
\begin{cases}
|Du|^{\beta} (\Delta u + (p-2) \Delta_{\infty}^{N} u) = f, \ \text{in $\Omega \cap B_1(0)$, $0 \in \partial \Omega$, $\beta \geq 0$},
\\
u_{\nu} =\tilde g,\ \text{on $\partial \Omega \cap B_1(0)$},
\end{cases}
\end{equation}
where $\Delta_{\infty}^N u$ is the normalized infinity laplacian operator. The case when $\beta=0$ corresponds to the Poisson problem for the  normalized $p-$laplacian operator and this has been studied in various contexts in a  number of papers. See for instance \cite{APR}, \cite{Be}, \cite{BK} and one can find the references therein.  For general $\beta>0$, we refer to  \cite{AR} for the interior $C^{1, \alpha}$  regularity result for such equations and also to  \cite{IJS} and \cite{A} for the parabolic counterpart of such results. 
\end{rem}

\end{document}